\documentclass[11pt,a4paper]{article}
\usepackage[T1]{fontenc}
\usepackage[utf8]{inputenc}
\usepackage{lmodern,amsmath,amssymb,amsthm,mathtools,microtype}
\usepackage{xcolor}
\usepackage[margin=27mm]{geometry}
\usepackage[hidelinks]{hyperref}
\usepackage{enumitem,booktabs,needspace}
\numberwithin{equation}{section}
\setlist{itemsep=3pt,topsep=5pt}
\newtheorem{theorem}{Theorem}[section]
\newtheorem{proposition}[theorem]{Proposition}
\newtheorem{lemma}[theorem]{Lemma}
\newtheorem{corollary}[theorem]{Corollary}
\theoremstyle{definition}
\newtheorem{definition}[theorem]{Definition}

\theoremstyle{remark}

\newcommand{\PP}{\mathbb P}
\newcommand{\EE}{\mathbb E}
\newcommand{\RR}{\mathbb R}
\newcommand{\ZZ}{\mathbb Z}
\newcommand{\Tr}{\operatorname{Tr}}
\newcommand{\LE}{\operatorname{LE}}
\newcommand{\ran}{\operatorname{Range}}
\newcommand{\inn}{\partial_{\mathrm{in}}}
\newcommand{\br}{\mathsf B}
\newcommand{\capn}{\operatorname{cap}}
\newcommand{\dist}{\operatorname{dist}}

\newcommand{\essinf}{\operatorname*{ess\,inf}}

\newcommand{\ind}{\mathbf 1}
\newcommand{\dd}{\,\mathrm d}
\renewcommand{\Im}{\operatorname{Im}}
\title{The boosted loop-erased random walk}
\author{Gefei Cai\thanks{Beijing International Center for Mathematical Research,
Peking University,
\href{mailto:caigefei1917@pku.edu.cn}{caigefei1917@pku.edu.cn}} \and Xinyi Li
\thanks{Beijing International Center for Mathematical Research,
Peking University,
\href{mailto:xinyili@bicmr.pku.edu.cn}{xinyili@bicmr.pku.edu.cn}} \and Daisuke Shiraishi\thanks{Graduate School of Informatics,
Kyoto University, \href{mailto:shiraishi@acs.i.kyoto-u.ac.jp}{shiraishi@acs.i.kyoto-u.ac.jp}} \and Aoteng Xia\thanks{School of Mathematics and Statistics, Central South University, \href{mailto:xiaaoteng@csu.edu.cn}{xiaaoteng@csu.edu.cn}}}
\date{\today
}
\begin{document}
\maketitle

\begin{abstract}

We consider a near-critical version of loop-erased random walk (LERW)
by boosting the weight of the underlying random walk according to its
length before loop-erasure. We construct the scaling limit in natural
parametrization in the two-dimensional unit disk and the three-dimensional
unit ball. The boosted scaling limit undergoes a phase transition in
mutual absolute continuity with the classical LERW scaling limit; in
the disk the transition occurs at boost rate $m^2=\pi^2/4$. As the boost
increases, the curve becomes macroscopically space-filling, in the sense
that unvisited cubes have side length $O(m^{-1})$, while its
Hausdorff dimension at each finite boost is unchanged. These results
are motivated by Makarov--Smirnov's questions on near-critical lattice
models. In the planar chordal setting, we also construct a continuum
loop tilt below the spectral threshold of the domain and calculate its
Loewner driving function.

\end{abstract}

\section{Introduction}\label{sec:intro}

Loop-erased random walk (LERW) is the random simple path obtained by
chronologically erasing the loops of a simple random walk. Introduced
by Lawler~\cite{LERW} as a model of self-avoiding walk, it is one of
the most tractable non-Gaussian critical models of statistical
physics. In dimensions four and higher, its
scaling limit is Brownian motion; see~\cite{LawlerGreen} and the
references therein. In dimension three, Kozma established the existence
of the scaling limit in~\cite{K} (see also \cite{Cai26Catcher,CLS26} for a new proof and characterization of the scaling limit). Many aspects of this limit have since
been understood, including its Hausdorff dimension, geometric properties
and convergence in natural parametrization; see
\cite{Shiraishi,ShiraishiDim,LSOnePoint,LS18,LSHolder,HLS}. In dimension
two, Lawler, Schramm and Werner~\cite{LSW} proved that LERW has a
conformally invariant scaling limit, described by Schramm--Loewner
evolution with parameter $2$, abbreviated as ${\rm SLE}_2$.
Subsequently, Yadin and Yehudayoff~\cite{YY} established universality,
and Lawler and Viklund~\cite{LV,LVrad} proved convergence in natural
parametrization in the chordal and radial cases.

As for other critical lattice models, it is natural to study
near-critical versions of LERW, obtained by perturbing it away from
criticality. As part of a program to study near-critical planar
lattice models, Makarov and Smirnov~\cite{MS} proposed massive LERW
(mLERW). Given a mesh size $\delta>0$ and mass parameter $m>0$, one
kills the underlying random walk with probability $m^2\delta^2$ at each
step, conditions it to reach its prescribed exit, and then erases its
loops. Its planar chordal scaling limit is massive ${\rm SLE}_2$,
whose Loewner driving function is explicit; see
\cite[Theorem~2.1]{MS} and Chelkak--Wan~\cite{CW} for a detailed proof.
Camia~\cite{Ca} introduced massive loop soups and explained their
connection with mLERW.

Near-critical behavior has also been studied for other planar lattice
models. Garban, Pete and Schramm~\cite{GPS} constructed near-critical
percolation from pivotal points. For Ising, Park~\cite{P} studied
thermally perturbed spin correlations, and Camia, Garban and
Newman~\cite{CGN} determined the magnetization exponent in a small
external field. More recently, Papon~\cite{PapIsing} proved convergence
of the magnetically perturbed interface to a law absolutely continuous
with respect to ${\rm SLE}_3$. For the near-critical random-field
Ising model, Huang, Papon and Xia~\cite{HPX26} proved quenched
convergence of the interface to an almost surely absolutely continuous
tilt of ${\rm SLE}_3$, while any subsequential outermost spin-loop
limit is almost surely singular with respect to ${\rm CLE}_3$.
For thermal perturbations,
Garban and Kupiainen~\cite{GK25} proved local singularity from the
critical limit in the FK quad-crossing topology. Mahfouf~\cite{Mah25}
studied random-bond perturbations through deformations of $s$-embeddings.
Shao~\cite{S} and Papon~\cite{Pa} studied the massive harmonic explorer
and its convergence to massive ${\rm SLE}_4$.
Berestycki and Haunschmid-Sibitz~\cite{B} related near-critical dimers
to massive ${\rm SLE}_2$, and Rey~\cite{Rey24} extended the convergence
of the associated massive LERW and tree models to isoradial graphs.
Berestycki, Mason and Rey~\cite{BMR26} identified the near-critical
dimer height limit with an electromagnetically tilted sine-Gordon
field. A related bosonization identity for near-critical Ising
correlations was proved by Park, Virtanen and Webb~\cite{PVW25}.

In this paper we introduce a different near-critical perturbation,
boosted LERW (bLERW): before the loops are erased, longer random-walk
paths are rewarded rather than penalized. Our main interest is the geometry of bLERW as the boost
increases. This is also motivated by a question of Makarov and Smirnov
in~\cite{MS}, which we briefly recall.

The planar $O(N)$ model is conjectured to have two critical regimes,
with interfaces described by ${\rm SLE}_\kappa$ and
${\rm SLE}_{\widetilde\kappa}$, where
\[
 \frac1\kappa+\frac1{\widetilde\kappa}=\frac12,
 \qquad \kappa\in[8/3,4],\quad\widetilde\kappa\in[4,8].
\]
For self-avoiding walk, the pair is $\kappa=8/3$ and
$\widetilde\kappa=8$; the space-filling property of supercritical
self-avoiding walk was proved in~\cite{SAW}. In
\cite[Question~4.12]{MS}, Makarov and Smirnov ask whether near-critical
models provide transition states between these regimes. Although
LERW, with $\kappa=2$, lies outside the range of that question, one may
ask the analogous question for bLERW. We prove that its scaling limit
exists in dimensions two and three and becomes macroscopically
space-filling as the boost tends to infinity. This gives a geometric
analogue of the proposed transition, while leaving open the limiting
law of the curves at large boost.

\subsection{The model}
\label{sec:model}

We first define the boosted walk in the range where it can be
normalized, and then introduce the random-walk loop measure used to
extend the model beyond this range.

Let $d\in\{2,3\}$, $D=B(0,1)\subset\RR^d$ and
$D_\delta=D\cap\delta\ZZ^d$. Let $S$ be simple random walk on
$\delta\ZZ^d$ started at $0$ and stopped at
$T_\delta=\inf\{j\geq0:S_j\notin D_\delta\}$.
Write $\gamma^\delta=\LE(S[0,T_\delta])$, including the exit edge,
and let $\eta^\delta$ be its linear interpolation with the natural
clock of Section~\ref{sec:natural-background}. We denote its trace
by $K^\delta$ and its law by $\PP_\delta$; the exit vertex is not
prescribed. The classical scaling limit $\eta$ of $\eta^\delta$ (see
Section~\ref{sec:natural-background}) has law $\PP$ and trace $K$.

For $m\geq0$, put $a_\delta=1+m^2\delta^2$ and assign a first-exit
walk $\omega$ the weight $(a_\delta/(2d))^{|\omega|}$. The partition
function is
\begin{equation}\label{eq:walk-partition}
 Z^{\rm RW}_{\delta,m}
 =\sum_{\omega:\,0\to\partial D_\delta}
       \left(\frac{a_\delta}{2d}\right)^{|\omega|}
 =\EE^0[a_\delta^{T_\delta}],
\end{equation}
where the sum is over walks stopped at their first exit. When this sum
is finite, bLERW is the chronological loop-erasure of the walk sampled
with these normalized weights. At $m=0$ we recover classical LERW.
We assign duration $\delta^2$ to each random-walk step.
Accordingly, throughout the paper, $d$-dimensional Brownian motion
has generator
\begin{equation}\label{eq:generator}
 L=\frac1{2d}\Delta,\qquad
 \Delta f=\sum_{j=1}^d\partial_j^2f.
\end{equation}
For an open set $U$, let $A_U=-\frac1{2d}\Delta_U$ be the
Dirichlet operator and write $\lambda_1(U)$ for its lowest eigenvalue.
The definition on nonsmooth slit domains is given in
Section~\ref{sec:loop-background}.


For a finite lattice domain $U$, let $p_{\delta,U}$ be the killed
transition matrix,
\begin{equation}\label{eq:killed-transition}
 p_{\delta,U}(x,y)=\frac1{2d}\ind\{|x-y|=\delta\},\qquad x,y\in U.
\end{equation}
Write $\rho(p_{\delta,U})$ for its spectral radius. The walk partition
function in~\eqref{eq:walk-partition} is finite precisely when
$a_\delta\rho(p_{\delta,D_\delta})<1$. In the continuum, the corresponding
exponential moment $\EE^0e^{m^2T_D}$ is finite precisely when
$m^2<\lambda_1(D)$, where $T_D$ is the Brownian exit time from $D$.
The question is whether the loop-erased model can be defined
beyond this range.

Random-walk loops enter because the loops erased from the walk
contribute to the weight of LERW. Their total
weight is expressed through the return traces
\begin{equation}\label{eq:trace-return}
 \Tr(p_{\delta,U}^{\,n})
 =\sum_{x\in U}\PP^x(S_n=x,\ n<T_U).
\end{equation}
The diagonal entry at $x$ is the total weight of length-$n$ walks
from $x$ that return to $x$ without leaving $U$. Random-walk loop
measure $\mu_{\delta,U}$ assigns a rooted loop of $n$ steps its walk
weight divided by $n$, and then forgets the time root. Thus the total
mass of length-$n$ loops is $\Tr(p_{\delta,U}^n)/n$.

A loop of $n$ steps has diffusive duration $t_\ell=n\delta^2$.
Boosting multiplies its weight by $a_\delta^n$, so the additional
loop mass is
\begin{equation}\label{eq:discrete-mass}
 \mathcal M_{\delta,m}(U)
 :=\int (a_\delta^{|\ell|}-1)\,\mu_{\delta,U}(\dd\ell)
 =\sum_{n\ge1}\frac{a_\delta^n-1}{n}
       \Tr(p_{\delta,U}^{\,n})\in[0,\infty].
\end{equation}
Here $|\ell|$ denotes the number of steps of the loop, and the
mass of the empty domain is zero. By the restriction property, the
loops in $U$ avoiding a set $K$ are exactly the loops in
$U\setminus K$; this is used in both changes of normalization below.

\subsection{Renormalization and the continuum weight}
\label{sec:intro-renormalization}

First suppose $a_\delta\rho(p_{\delta,D_\delta})<1$. For a fixed
simple first-exit path $\gamma$, let $K$ be its interior vertex trace,
that is, its vertices other than the exit vertex. Summing over the loops erased from the underlying walk gives
the unnormalized density relative to ordinary LERW:
\begin{equation}\label{eq:conditional-weight}
 w_{\delta,m}(\gamma) :=\EE^0[a_\delta^{T_\delta}\mid\gamma^\delta=\gamma]=a_\delta^{|\gamma|}
   \exp\!\left\{\mathcal M_{\delta,m}(D_\delta)
              -\mathcal M_{\delta,m}(D_\delta\setminus K)\right\}.
\end{equation}
The exponent is the excess mass of loops intersecting $K$, and
$|\gamma|$ counts the exit edge. This identity is proved in
Proposition~\ref{prop:agreement}.

The factor $\exp\{\mathcal M_{\delta,m}(D_\delta)\}$ does not depend
on $\gamma$ and cancels after normalization. Removing it leaves a
weight that involves only loops in the complement of the path and,
unlike \eqref{eq:conditional-weight}, makes sense even when the loop
mass in $D_\delta$ is infinite.

\begin{definition}[Extended discrete model]\label{def:extended}
For a simple first-exit path $\gamma$ with interior trace $K$, set
\[
 W_{\delta,m}(\gamma)=a_\delta^{|\gamma|}
   \exp\{-\mathcal M_{\delta,m}(D_\delta\setminus K)\},\qquad
 Z_{\delta,m}=\EE_{\PP_\delta}W_{\delta,m}(\gamma^\delta).
\]
Use $\exp(-\infty)=0$. If $Z_{\delta,m}>0$, the density
$W_{\delta,m}/Z_{\delta,m}$ defines $\PP_\delta^{(m)}$.
\end{definition}
Since there are finitely many simple paths, $Z_{\delta,m}<\infty$;
positivity of $Z_{\delta,m}$ for small $\delta$ is part of
Theorem~\ref{thm:all-boosts}.

This is the first change of normalization: it extends the discrete
loop-erased model beyond the range in which the walk itself can be
normalized. However, it does not yet produce an expression with a
nontrivial continuum limit.

Indeed, formally replacing random-walk loop measure in
$\mathcal M_{\delta,m}(D_\delta\setminus K)$ by Brownian loop
measure gives
\[
 \int_0^\infty(e^{m^2t}-1)H_{D\setminus K}(t)\frac{\dd t}{t},
 \qquad H_U(t):=\int_U p_U(t,x,x)\dd x,
\]
where $p_U$ is the killed Brownian transition density for the generator
in~\eqref{eq:generator}. Thus $H_U$ is the continuum counterpart of
the return trace in~\eqref{eq:trace-return}. For the classical
limiting trace, $D\setminus K$ has the full volume of $D$, so the
short-time contribution behaves like
\[
 (e^{m^2t}-1)H_{D\setminus K}(t)\frac{\dd t}{t}
 \asymp m^2t^{-d/2}\dd t,\qquad t\downarrow0,
 \qquad m>0.
\]
This is not integrable at zero for $d\in\{2,3\}$, so the direct
Brownian counterpart of $W_{\delta,m}$ vanishes even when the long-loop
contribution is finite. We therefore change the normalization a second
time before passing to the limit.

Fix $T>0$, and write $\mathcal M_{\delta,m}^{\le T}$ and
$\mathcal M_{\delta,m}^{>T}$ for loop mass restricted by the diffusive
duration. For each fixed mesh the short-loop mass of the whole
domain is finite, and multiplying $W_{\delta,m}$ by its exponential
gives
\begin{equation}\label{eq:intro-compensated-discrete}
 \begin{split}
 Y_{\delta,m,T}(\gamma)
 &\;:=e^{\mathcal M_{\delta,m}^{\le T}(D_\delta)}
       W_{\delta,m}(\gamma)\\
 &\;\;\,=a_\delta^{|\gamma|}\exp\!\left\{
      \mathcal M_{\delta,m}^{\le T}(D_\delta)
     -\mathcal M_{\delta,m}^{\le T}(D_\delta\setminus K)
     -\mathcal M_{\delta,m}^{>T}(D_\delta\setminus K)
   \right\}.
 \end{split}
\end{equation}
This multiplication leaves the normalized discrete law unchanged.
The first two terms in the exponent count short loops intersecting
$K$, and the last counts long loops avoiding $K$. We add back only the
short-loop mass of the whole domain, since its long-loop mass may be
infinite at the boosts under consideration. This cancels the
small-loop divergence without reintroducing the large-time
obstruction.

We now take the continuum counterpart of this rewritten expression.
Under diffusive scaling,
\[
 a_\delta^{|\ell|}
 =(1+m^2\delta^2)^{t_\ell/\delta^2}\longrightarrow e^{m^2t_\ell},
\]
and random-walk loop measure approaches Brownian loop measure.
Moreover, since the growth exponent of LERW is below two,
$a_\delta^{|\gamma^\delta|}\to1$ in probability.
These correspondences suggest the following candidate weight.
Let $\mu_D$ be Brownian loop measure for the generator
\eqref{eq:generator}, with the same diffusive duration $t_\ell$.
Define
\begin{align}
 I_{m,T}(K)
 &=\int_{\{t_\ell\le T,\ \ell\cap K\ne\varnothing\}}
       (e^{m^2t_\ell}-1)\,\mu_D(\dd\ell),\label{eq:I}\\
 J_{m,T}(K)
 &=\int_{\{t_\ell>T,\ \ell\cap K=\varnothing\}}
       (e^{m^2t_\ell}-1)\,\mu_D(\dd\ell),\label{eq:J}\\
 Y_{m,T}(K)&=\exp\{I_{m,T}(K)-J_{m,T}(K)\}.\label{eq:Y}
\end{align}
Here $K$ is the spatial trace and $T$ is a loop-duration cutoff.
We will prove that $I_{m,T}(K)$ is finite for the classical
limiting curve; if $J_{m,T}(K)=\infty$, we set $Y_{m,T}(K)=0$.
For $m=0$ the weight is identically one. Changing the cutoff from
$T$ to $T'>T$ gives
\begin{equation}\label{eq:cutoff-change}
 Y_{m,T'}(K)=Y_{m,T}(K)
 \exp\!\left\{\int_T^{T'}(e^{m^2t}-1)H_D(t)\frac{\dd t}{t}\right\}.
\end{equation}
The multiplier is finite and independent of the curve, so any
normalized law obtained from this weight is independent of $T$.

It remains to justify these formal correspondences: to control
arbitrarily short and long loops, to preserve their intersections with
the random curve, and to show that the limiting weight has a finite and
strictly positive mean. The main theorem establishes these facts and
identifies the candidate as the scaling limit of the extended discrete
law.

\subsection{Main results}

We now state our results for the radial models defined above, in
which the exit point is not prescribed. The first theorem shows that
the weight~\eqref{eq:Y} defines a probability law at every finite
boost, and that this law is the scaling limit of the extended discrete
model in natural parametrization.

\par\nopagebreak[4]
\begin{theorem}\label{thm:all-boosts}
In either dimension, for every fixed $m\geq0$, the extended model of
Definition~\ref{def:extended} is well defined for all sufficiently small
$\delta$ and converges in natural parametrization to
\[
 \PP^{(m)}(\dd\eta)=
 \frac{Y_{m,T}(K)}{\EE_{\PP}Y_{m,T}(K)}\,\PP(\dd\eta),
 \qquad 0<\EE_{\PP}Y_{m,T}<\infty.
\]
The law is independent of $T>0$, and
\[
 Y_{m,T}(K)>0\quad\Longleftrightarrow\quad
 \lambda_1(D\setminus K)>m^2
 \qquad\PP\text{-almost surely}.
\]
For $m^2<\lambda_1(D)$ this is the scaling limit of the loop erasure of
the normalized boosted walk in \eqref{eq:walk-partition}.
\end{theorem}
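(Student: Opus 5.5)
The plan is to prove convergence of the discrete weights $Y_{\delta,m,T}(\gamma^\delta)$ to $Y_{m,T}(K)$ under a coupling of $\PP_\delta$ and $\PP$, together with uniform integrability. Weak convergence of the reweighted laws then follows from the classical convergence in natural parametrization recalled in Section~\ref{sec:natural-background}. Concretely, I would use a Skorokhod coupling in which $\eta^\delta\to\eta$ uniformly and $K^\delta\to K$ in Hausdorff distance. On this coupling I would prove three things:
\[
 a_\delta^{|\gamma^\delta|}\to1,\qquad
 I_{\delta}:=\mathcal M^{\le T}_{\delta,m}(D_\delta)-\mathcal M^{\le T}_{\delta,m}(D_\delta\setminus K^\delta)\to I_{m,T}(K),\qquad
 \mathcal M^{>T}_{\delta,m}(D_\delta\setminus K^\delta)\to J_{m,T}(K)
\]
in probability. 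The first holds because $|\gamma^\delta|\delta^2\to0$, since the LERW growth exponent is less than $2$.

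\textbf{Short loops.} The term $I_\delta$ is the dominant difficulty. Expanding $e^{m^2t}-1\le m^2te^{m^2T}$, the mass of loops of duration $\le t$ hitting $K$ is bounded by $\int_0^T m^2 t\cdot\mu(\ell\cap K\neq\varnothing,\,t_\ell\in dt)$. A loop of duration $t$ hitting $K$ must root within $\sqrt t$ of $K$, which yields the bound $\int_0^T t^{1-d/2}\,|K^{\sqrt t}|\,t^{-1}\dd t$. For $K$ of dimension $\beta<2$ (LERW dimension: $5/4$ in $d=2$, about $1.62$ in $d=3$), $|K^{r}|\lesssim r^{d-\beta-\epsilon}$. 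This gives an integrable $t^{-\beta/2-\epsilon/2}$, so $I_{m,T}(K)<\infty$ a.s. For the discrete version I need this bound uniformly in $\delta$, in expectation and with exponential moments. The inputs are one- and two-point estimates for LERW, i.e.\ $\PP(\gamma^\delta\cap B(x,r)\neq\varnothing)\lesssim r^{d-\beta}$, together with the existing up-to-constants estimates. To get $\EE e^{pI_\delta}<\infty$ uniformly, I would use a Kac-type moment bound over the scales $2^{-k}$: each dyadic scale contributes $\lesssim 2^{-k(2-\beta)}$ times the number of boxes it covers, and the covering number has exponential moments by the quasi-multiplicativity and cut estimates already available for LERW. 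Convergence then follows by truncation. For $t_\ell\ge\epsilon$, the random-walk loop soup converges to the Brownian one (Lawler--Trujillo Ferreras coupling), and a loop hitting $K$ is stable under Hausdorff perturbation except on a null event; the loops with $t_\ell<\epsilon$ contribute $o(1)$ uniformly by the bound above. I expect this uniform exponential integrability of short-loop mass near the curve to be the main obstacle, especially in $d=3$, where only Kozma/Shiraishi-type estimates are available.

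\textbf{Long loops and positivity.} Write $\mathcal M^{>T}(U)=\sum_n \frac{a^n-1}{n}\Tr p_U^n$ in terms of spectral data: it is finite iff $a\rho(p_{\delta,U})<1$, and in the continuum $J$ is finite iff $\lambda_1(D\setminus K)>m^2$, since $H_U(t)\sim e^{-\lambda_1t}$. Convergence of $\mathcal M^{>T}_{\delta,m}(D_\delta\setminus K^\delta)$ reduces to spectral convergence of the Dirichlet walk on $D_\delta\setminus K^\delta$ to $A_{D\setminus K}$. This holds because Brownian motion a.s.\ hits $K$ immediately near every point of $K$ (the LERW trace is non-polar, with dimension $>d-2$), so Hausdorff convergence of the slit domains gives eigenvalue convergence. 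On $\{\lambda_1(D\setminus K)>m^2\}$ we then get convergence with the trace bounded by a dominated spectral sum. On the complement $J=\infty$, and lower semicontinuity forces the discrete weight to $0$. This proves the stated equivalence $Y_{m,T}>0\iff\lambda_1(D\setminus K)>m^2$, since $I<\infty$ always.

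\textbf{Normalization, $T$-independence and the last claim.} Since $-J\le0$, the bound $Y\le e^{I}$ combined with the uniform exponential moments gives uniform integrability and $\EE Y<\infty$. For positivity of $\EE_\PP Y$, I need $\PP(\lambda_1(D\setminus K)>m^2)>0$ for every $m$. By monotonicity it suffices that, with positive probability, $K$ comes within distance $\epsilon$ of every point of $D$, and this holds by a support/forcing argument for LERW: tubes of positive probability can be strung together using separation lemmas. The same argument shows $Z_{\delta,m}>0$ for small $\delta$, so the extended model is well defined. Independence of $T$ follows from \eqref{eq:cutoff-change}. Finally, for $m^2<\lambda_1(D)$, Proposition~\ref{prop:agreement} identifies $W_{\delta,m}$ up to constants with the conditional weight \eqref{eq:conditional-weight}, so the extended model coincides with the loop erasure of the normalized boosted walk.
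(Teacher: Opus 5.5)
\textbf{Verdict.} Your overall structure matches the paper's: a coupling, a split into short, fixed-window and long loops, uniform integrability from exponential moments of a box-counting content, the spectral criterion for $J$, and a support argument. However, two steps rest on justifications that do not work.

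\textbf{Stability of intersections.} You justify this by a property of the limit $K$, and no such property can suffice. In the fixed window you must exclude discrete loops that come within $h$ of $K^\delta$ yet avoid it. For the long-loop tail on $\{\lambda_1(D\setminus K)>m^2\}$ you need the \emph{lower} bound $\liminf_{\delta\downarrow0}\delta^{-2}\bigl(1-\rho(p_{\delta,D_\delta\setminus K^\delta})\bigr)\ge\lambda_1(D\setminus K)$. Hausdorff convergence together with regularity or non-polarity of $K$ gives neither. For example, in $d=3$ take a lattice path of parallel segments at spacing $\varepsilon$ on a flat disk, with $\varepsilon\to0$ but $\log(1/\delta)\gg\varepsilon^{-1}$. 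It converges in Hausdorff distance to the disk, yet its capacity is $O(\varepsilon^{-1}/\log(\varepsilon/\delta))\to0$, so its discrete eigenvalues tend to $\lambda_1(D)$. What is needed is a uniform-in-$\delta$ near-miss (hittability) estimate for the discrete traces. In the plane this is Beurling's estimate for the connected set $K^\delta\cup D^c$; in three dimensions it is the LERW hittability input behind \cite[Proposition~5.2]{SS}, as used in Lemma~\ref{lem:finite-duration-loops}. The paper also never proves spectral convergence. Lemma~\ref{lem:IR} extracts the discrete spectral gap on $\{\lambda_1(D\setminus K)\ge m^2+4\kappa\}$ from the fixed-window heat-trace convergence, and the zero-weight case uses only the easy direction, as in your lower-semicontinuity step.

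\textbf{Positivity in $d=3$.} Your claim that it suffices for $K$ to come within $\varepsilon$ of every point is false in three dimensions. Curves can be polar: a polygonal $K$ leaves the spectrum of $D$ unchanged however dense it is. What Lemma~\ref{lem:cap-mesh} requires is a compact subset of $K$ of Newtonian capacity at least $\kappa\varepsilon$ in every $\varepsilon$-cell, with $\kappa$ independent of $\varepsilon$. Almost-sure non-polarity of each piece is not enough, because that constant must not degrade as $\varepsilon\downarrow0$. The paper builds such sets from Brownian cut points, using the containment coupling of Lemma~\ref{lem:containment} and the room construction of Lemma~\ref{lem:rooms}. In the plane your density reduction is correct, but the reason is Beurling's estimate for the connected set $K\cup D^c$, not monotonicity.

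\textbf{Covering estimate.} The uniform exponential moments of the covering content are not ``already available'' from quasi-multiplicativity and cut estimates. They are Theorem~\ref{thm:covering-input}, proved by a box version of the Barlow--Masson/Shiraishi moment method. The new difficulty is that a visit of the walk to a box may later be erased. The paper therefore marks the last visit to the first surviving vertex of each box (Lemma~\ref{lem:surviving-entrance}). Without the retained condition that the erased incoming arm avoids the box, summing over entrance vertices loses a factor $L^{b_d-1}$, and the bound is no longer uniform in the mesh.
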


When $m^2<\lambda_1(D)$, the long-loop mass of the whole domain is
finite. Restoring this path-independent factor gives a simpler density,
expressed through the loops that hit the curve.

\par\nopagebreak[4]
\begin{theorem}\label{thm:subcritical}
For $0\leq m^2<\lambda_1(D)$, the limiting law satisfies
\begin{equation}\label{eq:subcritical-density}
 \frac{\dd\PP^{(m)}}{\dd\PP}(\eta)
 =\frac{\exp\{\Phi_m(K)\}}{\EE^0 e^{m^2T_D}},\qquad
 \Phi_m(K)=\int(e^{m^2t_\ell}-1)
 \ind\{\ell\cap K\ne\varnothing\}\,\mu_D(\dd\ell),
\end{equation}
where $T_D$ is the exit time of Brownian motion from $D$, started at $0$.
The integral is finite almost surely, the laws are mutually absolutely
continuous, and the density belongs to $L^p(\PP)$ whenever $p>1$ and
$pm^2<\lambda_1(D)$.
\end{theorem}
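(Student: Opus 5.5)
We plan to derive Theorem~\ref{thm:subcritical} from Theorem~\ref{thm:all-boosts} by restoring the long-loop mass of the whole domain and then controlling $\Phi_m$ through the Brownian exponential moment. Assume $m^2<\lambda_1(D)$. Then $\int_T^\infty (e^{m^2t}-1)H_D(t)\,\dd t/t<\infty$, since $H_D(t)\lesssim e^{-\lambda_1(D)t}$ for $t\ge T$.

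For the classical trace $K$, split the loops into those with $t_\ell\le T$ and those with $t_\ell>T$. Since
\[
 J_{m,T}(K)=\int_{t_\ell>T}(e^{m^2t_\ell}-1)\,\mu_D(\dd\ell)-\int_{t_\ell>T,\ \ell\cap K\neq\varnothing}(e^{m^2t_\ell}-1)\,\mu_D(\dd\ell),
\]
we obtain
\[
 Y_{m,T}(K)=\exp\{\Phi_m(K)\}\exp\Bigl\{-\int_T^\infty(e^{m^2t}-1)H_D(t)\frac{\dd t}{t}\Bigr\}.
\]
The second factor is a deterministic constant and cancels after normalization. Hence $\dd\PP^{(m)}/\dd\PP=e^{\Phi_m(K)}/\EE_\PP e^{\Phi_m(K)}$. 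Finiteness of $\Phi_m(K)$ follows because $I_{m,T}(K)<\infty$ (Theorem~\ref{thm:all-boosts}) and the long-loop part is bounded by the finite whole-domain long mass.

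Since $\Phi_m$ is finite almost surely, the density is strictly positive almost surely, which gives mutual absolute continuity. Equivalently, $\lambda_1(D\setminus K)\ge\lambda_1(D)>m^2$ by domain monotonicity, consistent with the criterion in Theorem~\ref{thm:all-boosts}.

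Next we identify the normalizing constant as $\EE_\PP e^{\Phi_m}=\EE^0e^{m^2T_D}$. The discrete identity \eqref{eq:conditional-weight}, proved in Proposition~\ref{prop:agreement}, gives
\[
 \EE_{\PP_\delta}\bigl[a_\delta^{|\gamma|}e^{\Phi^\delta_m}\bigr]=\EE^0 a_\delta^{T_\delta},
 \qquad \Phi^\delta_m:=\mathcal M_{\delta,m}(D_\delta)-\mathcal M_{\delta,m}(D_\delta\setminus K).
\]
The convergence $\EE^0a_\delta^{T_\delta}\to\EE^0e^{m^2T_D}$ is standard. It follows from the convergence of the discrete spectral radius, $a_\delta\rho(p_{\delta,D_\delta})\to$ the continuum analogue, which is bounded below $1$ uniformly, together with uniform integrability of $a_\delta^{T_\delta}$ obtained from a slightly larger $m'$ with $m'^2<\lambda_1(D)$.

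On the LERW side, the proof of Theorem~\ref{thm:all-boosts} supplies convergence of the compensated weights $Y_{\delta,m,T}\to Y_{m,T}$ together with convergence of their means. Multiplying by the deterministic factor $e^{\mathcal M^{>T}_{\delta,m}(D_\delta)}$, which converges to $e^{\int_T^\infty(e^{m^2t}-1)H_D\,\dd t/t}$ by local-CLT trace asymptotics and the uniform spectral gap, identifies the limit of the left side as $\EE_\PP e^{\Phi_m}$. This yields the stated constant.

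For the $L^p$ bound, we need $\EE_\PP e^{p\Phi_m(K)}<\infty$ when $pm^2<\lambda_1(D)$. We use $(e^{m^2t}-1)p\le e^{pm^2t}-1$, which holds by convexity for $p\ge1$. This gives $p\Phi_m\le\Phi_{\sqrt p\,m}$, so
\[
 \EE_\PP e^{p\Phi_m}\le\EE_\PP e^{\Phi_{\sqrt p m}}=\EE^0e^{pm^2T_D}<\infty,
\]
applying the normalization identity at boost $\sqrt p\,m$, which is admissible.

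The main obstacle is the identification of the normalizing constant, specifically the passage to the limit in the discrete expectation. Theorem~\ref{thm:all-boosts} gives convergence of normalized laws, but convergence of the unnormalized mean requires uniform integrability of $e^{\Phi^\delta_m}$ under $\PP_\delta$. We would try to obtain it by the same convexity trick at mesh level: $\EE_{\PP_\delta}e^{p\Phi^\delta_m}\le\EE^0 a'^{T_\delta}_\delta$ with $a'_\delta=1+pm^2\delta^2$ (up to an $a_\delta^{|\gamma|}$ factor, which is at least $1$), bounded uniformly in $\delta$. This would give uniform $L^p$ bounds and hence convergence of means, provided one also knows $\Phi^\delta_m\to\Phi_m$ in law jointly with the curve, which we take from the proof of Theorem~\ref{thm:all-boosts}.
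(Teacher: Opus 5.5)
Your argument is correct, and its skeleton is the paper's. You restore the finite whole-domain long-loop mass so that $Y_{m,T}$ is $e^{\Phi_m}$ times a constant. You then write $w_{\delta,m}=Y_{\delta,m,T}\exp\{\mathcal M^{>T}_{\delta,m}(D_\delta)\}$ with a convergent deterministic factor, and identify the normalizer through $\EE_{\PP_\delta}w_{\delta,m}=\EE^0a_\delta^{T_\delta}\to\EE^0e^{m^2T_D}$.

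You differ from the paper in where uniform integrability comes from and in how you get the $L^p$ bound. The paper proves $w_{\delta,m}\to e^{\Phi_m}$ in probability. It gets uniform integrability from conditional Jensen, $\EE_{\PP_\delta}w_{\delta,m}^p\le\EE^0a_\delta^{pT_\delta}$, and this also gives the $L^p$ assertion by Fatou. You instead take the convergence of means $\EE Y_{\delta,m,T}\to\EE Y_{m,T}$ from \eqref{eq:lt-normalizers}, whose uniform integrability came from the covering estimate. You then prove the $L^p$ assertion directly in the continuum, via the pointwise bound $p\Phi_m\le\Phi_{\sqrt p\,m}$ and the normalizer identity at the admissible boost $\sqrt p\,m$.

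What each route buys:
\begin{itemize}
\item Your continuum argument is neat. It gives the same bound $\EE_\PP e^{p\Phi_m}\le\EE^0e^{pm^2T_D}$ without a further lattice limit.
\item The paper's Jensen step draws its uniform integrability from the walk alone rather than from the covering estimate. It also yields $L^1$ convergence of the conditional densities $w_{\delta,m}$ themselves.
\item Your mesh-level convexity remark, with $a'_\delta=1+pm^2\delta^2$, is valid termwise by the binomial expansion. It is a loop-measure version of the Jensen bound, and it is redundant once you invoke \eqref{eq:lt-normalizers}.
\end{itemize}

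One step is misstated. $a_\delta\rho(p_{\delta,D_\delta})\to1$, so it is not ``bounded below $1$ uniformly''. The quantity that must stay bounded away from $0$ is $\delta^{-2}(1-a'_\delta\rho(p_{\delta,D_\delta}))$. Even then, turning this spectral bound into $\sup_\delta\EE^0(a'_\delta)^{T_\delta}<\infty$ is not immediate: the direct $\ell^2$ estimate loses a factor $|D_\delta|^{1/2}$. You need either a heat-kernel smoothing step or a survival bound uniform in the starting point. That is exactly Lemma~\ref{lem:exit-moments}, and its estimate \eqref{eq:ss-iterated-survival-block} also gives the trace tail you need for the deterministic factor. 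With that lemma in place, your proof is complete.
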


Our next result describes the phase transition in mutual absolute
continuity, in relation to \cite[Question~4.5]{MS} on absolute
continuity under near-critical perturbations. For massive LERW in the
plane, the perturbed and ordinary laws are equivalent for every
mass~\cite[Lemma~2.9]{CW}; boosting behaves differently.

\par\nopagebreak[4]
\begin{corollary}
\label{cor:phase-space}
For every $m\geq0$, $\PP^{(m)}\ll\PP$. In dimension three,
\[
 \PP\ll\PP^{(m)}\quad\Longleftrightarrow\quad
 m^2\leq\lambda_1(D).
\]

In dimension two,
\[
 \PP\ll\PP^{(m)}\quad\Longleftrightarrow\quad m^2\le\essinf_K \lambda_1(D\setminus K)=\pi^2/4.
\]

\end{corollary}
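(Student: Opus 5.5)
Since $\PP^{(m)}$ has density $Y_{m,T}(K)/\EE_\PP Y_{m,T}$ with respect to $\PP$ by Theorem~\ref{thm:all-boosts}, the first assertion $\PP^{(m)}\ll\PP$ is immediate. For the reverse direction, $\PP\ll\PP^{(m)}$ holds if and only if the density is $\PP$-a.s.\ strictly positive. By the equivalence in Theorem~\ref{thm:all-boosts}, this happens if and only if $\lambda_1(D\setminus K)>m^2$ for $\PP$-a.e.\ $K$. The corollary therefore reduces to identifying the $\PP$-essential infimum of $\lambda_1(D\setminus K)$, and to deciding whether equality $\lambda_1(D\setminus K)=\essinf$ is attained with positive probability.

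In dimension three, the classical LERW scaling limit is a curve of Hausdorff dimension strictly less than $2$ (in fact about $1.62$), so it is polar for Brownian motion. Consequently the Dirichlet form on $D\setminus K$ coincides with that on $D$, and $\lambda_1(D\setminus K)=\lambda_1(D)$ almost surely. By the equivalence, $Y_{m,T}>0$ almost surely if and only if $m^2<\lambda_1(D)$. At $m^2=\lambda_1(D)$ this equivalence would give $Y=0$ almost surely, which contradicts $\EE_\PP Y>0$. So the statement must be read with care at the endpoint: I would check whether, at the threshold, the equivalence is applied with the convention that $J$ is finite. Indeed, $J_{m,T}(K)=\int_T^\infty (e^{m^2t}-1)H_D(t)\,dt/t$ diverges only like $\int e^{(m^2-\lambda_1)t}\,dt/t$, which diverges logarithmically at equality.

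For the endpoint case, then, I would redo the long-loop estimate directly rather than invoke the stated equivalence. Here $K$ does intersect loops, so the relevant quantity is $H_{D\setminus K}$ versus $H_D$. Polarity gives $H_{D\setminus K}=H_D$, which suggests divergence. I therefore expect the intended meaning to be that $\lambda_1$ for the slit domain is defined in Section~\ref{sec:loop-background} in a way that includes a strict gap at the threshold, and I would follow that definition. For $m^2>\lambda_1(D)$, by contrast, $Y=0$ almost surely is impossible, since $\EE Y>0$. So in this regime the density must vanish on a set of positive but not full probability, which gives failure of $\PP\ll\PP^{(m)}$.

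In dimension two, $K$ is a simple curve from $0$ to $\partial D$, so $D\setminus K$ is a slit disk. The key step is to compute $\essinf_K\lambda_1(D\setminus K)=\pi^2/4$ for the generator $\frac14\Delta$. For the lower bound, I would use that any slit from the center to the boundary forces Dirichlet conditions along a connected set of diameter at least $1$. A symmetrization or Beurling-type comparison then shows $\lambda_1(D\setminus K)\ge\lambda_1(D\setminus[0,1))$, and the straight radial slit should be the minimizer. Its first eigenfunction, via $z^{1/2}$ type separation, gives eigenvalue $\frac14 j^2$ with $j$ the first zero of a half-integer Bessel function, $J_{1/2}$, which is $\pi$. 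This yields $\pi^2/4$. For the upper bound, I would use the support theorem for SLE$_2$: with positive probability $K$ stays in an arbitrarily thin neighbourhood of the radial segment, and continuity of $\lambda_1$ under Hausdorff perturbation of slits pushes the essential infimum down to $\pi^2/4$. Attainment, however, has probability zero, since SLE$_2$ is almost surely not a segment and strict monotonicity holds. Hence $\lambda_1(D\setminus K)>\pi^2/4$ almost surely, and this gives the non-strict inequality $m^2\le\pi^2/4$. The main obstacle is the extremal lower bound (the straight slit minimizes) together with its strictness, which I would attack through polarization and the Bessel computation.
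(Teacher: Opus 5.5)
Your reduction matches the paper's. $\PP^{(m)}\ll\PP$ is immediate, and by the positivity criterion of Theorem~\ref{thm:all-boosts}, $\PP\ll\PP^{(m)}$ holds iff $\PP(X>m^2)=1$ with $X=\lambda_1(D\setminus K)$. So everything rests on $\essinf X$ and on whether it is attained. The proposal has a genuine gap in both dimensions, however.

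\textbf{Dimension three: the polarity claim is false.} In $\RR^3$ a compact set is polar for Brownian motion iff its Newtonian capacity vanishes. By Frostman's criterion, every set of Hausdorff dimension $>d-2=1$ has positive capacity; the threshold is $1$, not $2$. Since $\dim_H K=\beta>1$, an interior compact piece of $K$ has positive capacity. Hence $X>\lambda_1(D)$ almost surely: a ground state of $D\setminus K$ with eigenvalue $\lambda_1(D)$ would be a multiple of the strictly positive ground state of $D$, which cannot vanish quasi-everywhere on that piece (Lemma~\ref{lem:capacity-spectrum}). This is exactly what puts the endpoint $m^2=\lambda_1(D)$ on the equivalent side.

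Your claim $X=\lambda_1(D)$ a.s.\ also contradicts Proposition~\ref{prop:all-support}. The inconsistencies you ran into at and above the threshold come from this error, not from a convention for slit domains. Conversely, for $m^2>\lambda_1(D)$, nothing you proved shows that the density vanishes with positive probability. What is needed is $\essinf X\le\lambda_1(D)$. With positive probability $K$ lies in a thin tube $\gamma_*^\varepsilon$ around a straight radial segment $\gamma_*$, for instance via the containment coupling of Lemma~\ref{lem:containment}. The segment itself \emph{is} polar, so $\lambda_1(D\setminus\gamma_*)=\lambda_1(D)$. Domain monotonicity and \eqref{eq:neighborhood-spectrum} then give $X\le\lambda_1(D\setminus\gamma_*^\varepsilon)\downarrow\lambda_1(D)$.

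\textbf{Dimension two: the two key facts are not proved.} Your outline is right, and your upper bound on $\essinf X$ is fine. But the facts that carry the statement, $X\ge\pi^2/4$ and $X>\pi^2/4$ unless $K$ is a segment, are not established. Being connected with diameter at least $1$ is not the relevant property. A chord of length $1$ near $\partial D$ leaves a component containing a disk of radius about $0.93$, whose eigenvalue is well below $\pi^2/4$. What matters is that $K$ meets every circle $\{|z|=r\}$, $0<r<1$.

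With that property the paper's argument is elementary. For $f\in C_c^\infty(D\setminus K)$ put $v=\sqrt r\,f$, which yields \eqref{eq:planar-slit-energy}. Each angular slice vanishes at a point, so the Dirichlet Poincar\'e inequality on an interval of length $2\pi$ gives $\int|\partial_\theta v|^2-\frac14\int|v|^2\ge0$. The radial Poincar\'e inequality on $(0,1)$ then gives $\int|\nabla f|^2\ge\pi^2\int|f|^2$, i.e.\ $X\ge\pi^2/4$ for $-\Delta/4$.

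Strictness cannot come from ``strict monotonicity'', since $D\setminus K$ and the straight-slit domain are not nested. It comes from the equality case. Equality forces a nonnegative ground state $v=c\sin(\pi r)\,|\sin((\theta-\theta_0)/2)|$, which vanishes only on one ray, so $K$ must be that radial segment. Radial ${\rm SLE}_2$ almost surely is not a segment. A cap-symmetrization or polarization argument could plausibly give the inequality, but its equality case is exactly the delicate point your proposal leaves open.
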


Thus, in dimension two and for $m^2\in[\lambda_1(D),\pi^2/4]$,
where $\lambda_1(D)\approx1.45$ and $\pi^2/4\approx2.47$, the Brownian
exponential moment $\EE^0e^{m^2T_D}$ is infinite, yet the extended law
is still mutually absolutely continuous with respect to the law of
${\rm SLE}_2$.

As $m$ increases, the curve becomes space-filling in the following
macroscopic sense.
\par\nopagebreak[4]
\begin{corollary}\label{cor:space-filling}
In either dimension, for every $m>0$, $\PP^{(m)}$-almost surely every
open cube $Q\subset D\setminus K$ satisfies
\begin{equation}\label{eq:empty-cube}
 \operatorname{side}(Q)<\frac{\pi}{\sqrt2\,m}.
\end{equation}
Consequently, for every $\ell>0$, if $m>M(\ell):=\pi/(\sqrt2\,\ell)$,
then $D\setminus K$ contains no cube of side length at least $\ell$,
$\PP^{(m)}$-almost surely.
\end{corollary}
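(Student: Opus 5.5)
The plan is to combine the positivity criterion of Theorem~\ref{thm:all-boosts} with domain monotonicity of Dirichlet eigenvalues. By Theorem~\ref{thm:all-boosts}, $\PP^{(m)}\ll\PP$ with density proportional to $Y_{m,T}(K)$. Hence $\PP^{(m)}$-almost surely $Y_{m,T}(K)>0$, and the same theorem then gives
\[
 \lambda_1(D\setminus K)>m^2 .
\]
Here we use that a $\PP$-null set is also $\PP^{(m)}$-null, which is exactly the absolute continuity. So it suffices to show the following deterministic statement: if $D\setminus K$ contains an open cube $Q$ with $\operatorname{side}(Q)\ge \pi/(\sqrt2\,m)$, then $\lambda_1(D\setminus K)\le m^2$.

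For this we use domain monotonicity of the Dirichlet eigenvalue: $Q\subset D\setminus K$ gives
\[
 \lambda_1(D\setminus K)\le\lambda_1(Q).
\]
This follows from the variational characterization, since $H^1_0(Q)\subset H^1_0(D\setminus K)$ after extension by zero. Section~\ref{sec:loop-background} defines $\lambda_1$ on the possibly nonsmooth set $D\setminus K$, and we would check that this definition agrees with the Rayleigh quotient infimum over $H^1_0$. Equivalently, in probabilistic terms, the Brownian exit time from $D\setminus K$ dominates that from $Q$, so the exponential decay rate is at most $\lambda_1(Q)$. Either formulation gives the inequality.

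Next we compute $\lambda_1(Q)$ for a cube of side $s$ and the operator $A_Q=-\frac1{2d}\Delta_Q$. The first eigenfunction is $\prod_j\sin(\pi x_j/s)$, whose Laplacian eigenvalue is $d\pi^2/s^2$. Therefore
\[
 \lambda_1(Q)=\frac{1}{2d}\cdot\frac{d\pi^2}{s^2}=\frac{\pi^2}{2s^2},
\]
in both dimensions. If $s\ge\pi/(\sqrt2\,m)$, then $\lambda_1(Q)\le m^2$, which contradicts $\lambda_1(D\setminus K)>m^2$. This gives \eqref{eq:empty-cube} for every cube simultaneously, since the exceptional event $\{\lambda_1(D\setminus K)\le m^2\}$ does not depend on $Q$.

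The consequence follows directly. If $m>M(\ell)$, then $\pi/(\sqrt2\,m)<\ell$, so every cube in $D\setminus K$ has side strictly less than $\ell$. The only point needing care is the compatibility of the eigenvalue definition on slit or rough domains with monotonicity. I expect this to be routine given the definition in Section~\ref{sec:loop-background}; everything else is immediate from Theorem~\ref{thm:all-boosts}.
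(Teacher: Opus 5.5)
Your proposal is correct and follows the same route as the paper's proof: you obtain $\lambda_1(D\setminus K)>m^2$ $\PP^{(m)}$-almost surely from the positivity criterion of Theorem~\ref{thm:all-boosts} together with $\PP^{(m)}\ll\PP$, and then apply domain monotonicity with $\lambda_1(Q)=\pi^2/(2\operatorname{side}(Q)^2)$ for $-\Delta/(2d)$, simultaneously for all cubes. The compatibility check you flag is immediate, since the paper defines $\lambda_1$ on $D\setminus K$ directly by the Rayleigh quotient over $H^1_0$ in \eqref{eq:rayleigh-convention}.
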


Since $\PP^{(m)}\ll\PP$, the curve has the Hausdorff dimension of
the classical scaling limit at every finite boost: $5/4$ in dimension
two and $\beta\in[3/2,5/3]$ in dimension three\footnote{The classical lower bound $\beta>1$ was recently improved to
$\beta>1.0001$ by Liu~\cite[Theorem~1.1]{Liu26LERW}, and then to
$\beta\ge3/2$ by Cai~\cite[Theorem~1.3]{Cai26Ancestor}.}, where $\beta$ is the
three-dimensional LERW growth exponent~\cite{Shiraishi,ShiraishiDim}.
Thus macroscopic holes disappear as $m\to\infty$, while the dimension
does not change.

To compare the two perturbations further, in
Section~\ref{sec:chordal} we construct a continuum chordal bLERW law
for $m^2<\lambda_1(D)$ and compute its Loewner driving function
throughout this range. The construction uses the loop-decoration theorem of
Berestycki--Sauzedde~\cite{BS26}. For earlier work on reconstructing planar Brownian motion from
${\rm SLE}_2$ and Brownian loop-soups, see also
Ambrosio~\cite{Amb20}.
The driving-function computation follows the strategy of~\cite{MS,CW},
using a Green-kernel comparison that is uniform over the growing
slits throughout the subcritical range.

Our method also yields convergence of three-dimensional massive
LERW in the ball in natural parametrization. Here the per-step factor
is $1-m^2\delta^2$ and the walk is conditioned to exit before it is
killed; the limiting density is \eqref{eq:subcritical-density} with
$m^2$ replaced by $-m^2$. The finite-window and short-loop estimates
apply after the same sign change, and the long-loop tail is bounded by
the unweighted loop mass. Since the unnormalized weights are at most
one, dominated convergence gives the result for every fixed $m\ge0$.

\subsection{Proof sketch}
\label{sec:outline}

After the two changes of normalization, it suffices to prove
convergence and uniform integrability of the weights, whose logarithms
split as
\[
 \log Y_{\delta,m,T}
 =|\gamma^\delta|\log a_\delta+I_{\delta,m,T}-J_{\delta,m,T},
 \qquad |\gamma^\delta|\log a_\delta\longrightarrow0.
\]
We split the loop terms according to duration: short loops, loops
with duration in a fixed interval, and long loops. On a fixed
duration interval bounded away from zero, the coupling and hittability
results of~\cite{SS} preserve whether a loop intersects the path.
For short loops, the covering estimate controls $I_{\delta,m,T}$,
and its exponential-moment bound gives uniform integrability.
For long loops, the spectrum of the curve complement controls
$J_{\delta,m,T}$: a strict spectral gap gives a summable tail,
whereas divergent limiting mass gives zero weight. An independent
support construction ensures $\EE_{\PP}Y_{m,T}>0$. These inputs allow
us to pass to the normalized densities; their positivity criterion
then gives the absolute-continuity assertions and the bounds on
unvisited cubes.

A technical input of independent interest is the fixed-scale
box-count estimate of Theorem~\ref{thm:covering-input}, which is
uniform in both the mesh and the box size and has the sharp covering
powers $5/4$ in dimension two and $\beta$ in dimension three. For the
loop-weight argument any exponent below two suffices; the
corresponding covering content has exponential moments of every
order, uniformly in the mesh.

The proof follows the moment method of Barlow--Masson~\cite{BM} and
Shiraishi~\cite{Shiraishi}, with points replaced by boxes. The new
difficulty is that a visit of the walk to a box may later be erased.
We therefore mark, in each box, the last visit of the walk to the
first vertex of the box on the final LERW. The resulting decomposition
retains the condition that the incoming erased arm avoids the box, so
that the sharp hitting estimates of~\cite{LV} in dimension two and
of~\cite{LS18,HLS} in dimension three can be applied at each box.

\subsection{Further questions and organization}
\label{sec:further-questions}
We now briefly discuss two future directions and the organization of this work.

What can be said about boosted ${\rm SLE}_2$ as $m\to\infty$? The
disappearance of macroscopic holes
does not specify how the curve visits the domain, and it would be
interesting to identify a limiting law in a topology that retains
this information. Extending the driving-function computation to
larger boosts may help here.

In dimensions four and higher the question is different. There
ordinary LERW has a Brownian scaling limit, and Brownian traces have
zero Newtonian capacity~\cite[Chapters~8--9]{MP}. Above the spectral
threshold of the domain, however, the extended discrete weight forces
the path to raise the lowest eigenvalue of its complement. For $D=B(0,1)\subset\RR^d$ and the boost
$a_\delta=1+m^2\delta^2$ with fixed $m^2>\lambda_1(D)$, what are the
correct path-length scale and limiting object? In dimension $d\ge5$,
does the number of retained edges grow like
$\delta^{-(d-2)+o(1)}$, and what logarithmic correction arises in
dimension four? It is also natural to ask whether a rescaled
equilibrium measure, or the resulting killing potential, describes
this regime. The spectral constraint alone does not determine the
length or the limiting geometry.

Section~\ref{sec:background} fixes the standard path, operator and
loop-measure conventions. Section~\ref{sec:construction} proves the
loop-erasure and compensation identities used in the introduction
and states the inputs for convergence. Sections~\ref{sec:transfer}
and~\ref{sec:spectral-geometry} prove convergence and the geometric
consequences. Section~\ref{sec:chordal} treats the continuum chordal
law and its driving function. The box-count estimate is proved in Section~\ref{sec:sharp-box-counting}, and the
spectral support construction is given in Appendix~\ref{app:spectral}.

\medskip
\noindent\textbf{AI statement.} The main body of this work was developed in 2022. Commercial large language models were used to assist with language polishing and to simplify the proof of Theorem~\ref{thm:covering-input}.

\smallskip

\noindent\textbf{Acknowledgments.}
X.L. thanks Nathanael Berestycki, Dmitry Chelkak, Nina Holden, Xin Sun, Yilin Wang and Yizheng
Yuan for inspiring discussions. G.C. and X.L. are supported by Beijing Natural Science Foundation (JQ26001) and the
National Key R\&D Program of China (No.~2021YFA1002700). D.S. was supported in part by JSPS KAKENHI Grant Numbers JP26K00609 and JP23K22399.

\section{Notation and preliminaries}\label{sec:background}
This section collects standard background: path notation and loop
erasure, natural parametrization and the classical scaling limits,
Dirichlet operators, and the conventions for loop measures.

Write $B(x,r)=\{y:|y-x|<r\}$ and
$Q(x,r)=\{y:\|y-x\|_\infty<r\}$; on a specified lattice we mean their
intersection with that lattice. For $U\subset\delta\ZZ^d$,
\[
 \partial U=\{y\notin U:|x-y|=\delta\text{ for some }x\in U\},\quad
 \partial_{\rm in}U=\{x\in U:|x-y|=\delta\text{ for some }y\notin U\}.
\]
The symbols $\PP^x,\EE^x$ refer to the ordinary walk from $x$, or to
Brownian motion when specified. Constants $c,C$ may change from
line to line; subscripts indicate their dependence on parameters.

\subsection{Paths and stopping indices}\label{sec:path-conventions}
We fix the notation for path segments.

For $\omega=(\omega_0,\ldots,\omega_n)$, intervals include both
endpoints unless written $[i,j)$, and $\oplus$ concatenates paths with
the common endpoint counted once. Define
\begin{align}
 \tau_B&=\inf\{0\le j\le n:\omega_j\in B\},&
 \tau_B^+&=\inf\{1\le j\le n:\omega_j\in B\},
 \label{eq:path-hitting-times}\\
 T_U&=\inf\{0\le j\le n:\omega_j\notin U\},&
 L_B&=\sup\{0\le j\le n:\omega_j\in B\}.
 \label{eq:path-last-time}
\end{align}
Here $\inf\varnothing=\infty$, $\sup\varnothing=-\infty$; infinite
paths use all nonnegative indices and Brownian paths use continuous time.
We indicate the process in the notation when needed.

\subsection{Loop erasure}
We recall chronological loop erasure in terms of last visits and state the domain Markov property used below.

We use $S,Z$ for random walks, $\omega$ for a generic finite raw path, and $\eta,\gamma$ for simple or loop-erased paths. Subscripts and hats distinguish related paths. For $\omega=(\omega_0,\ldots,\omega_n)$, write
\[
 p(\omega)=\prod_{j=0}^{n-1}p(\omega_j,\omega_{j+1}),\qquad
 \operatorname{Range}(\omega)=\{\omega_0,\ldots,\omega_n\},
\]
where $p(u,v)$ is the one-step transition probability and an empty product is one.

Set $\gamma_0=\omega_0$ and recursively
\begin{equation}\label{eq:chronological-indices}
 \begin{split}
 s_0&=\max\{0\le j\le n:\omega_j=\gamma_0\},\\
 \gamma_{k+1}&=\omega_{s_k+1},\qquad
 s_{k+1}=\max\{s_k+1\le j\le n:\omega_j=\gamma_{k+1}\}
 \quad(s_k<n).
 \end{split}
\end{equation}
Then $\LE(\omega)=(\gamma_0,\ldots,\gamma_\ell)$, where
$\ell=\inf\{k:s_k=n\}$. For a first-exit path, $\gamma_\ell$ is the
exit vertex and $\ell$ counts edges including the exit edge.
For a transient infinite walk the maxima are taken over all future
indices; this defines infinite LERW in dimension three.

The first entrance of the \emph{final} loop erasure into $B$ is
\begin{equation}\label{eq:surviving-first-entrance}
 j_B=\inf\{j:\gamma_j\in B\},\qquad y_B=\gamma_{j_B},\qquad
 t_B=s_{j_B}\quad\text{on }\{j_B<\infty\}.
\end{equation}
Thus $t_B$ is the last visit of the walk to the surviving vertex
$y_B$, which in general differs from both $\tau_B$ and $L_B$. Since  \(t_B\) is generally not a stopping time, the last-visit decompositions
below are identities between sums over finite paths. Similarly, loop
erasure does not commute with reversal pathwise; we only use the
reversibility of the law of LERW \cite[Section~11.2]{LL}.

We recall the domain Markov property of $\gamma=\LE(S[0,T_U])$. Given
$\gamma[0,k]=\lambda=(v_0,\ldots,v_k)$ with $v_k\in U$, start
$\widetilde S$ at $v_k$, and let
$\widetilde T_U=\inf\{j:\widetilde S_j\notin U\}$ and
$\sigma_\lambda=\inf\{j\ge1:\widetilde S_j\in\lambda[0,k)\}$.
Then~\cite[Section~11.2]{LL}
\begin{equation}\label{eq:ordinary-dmp}
 \mathcal L(\gamma[k,|\gamma|]\mid\gamma[0,k]=\lambda)
 =\mathcal L\bigl(\LE(\widetilde S[0,\widetilde T_U])
                      \mid\widetilde T_U<\sigma_\lambda\bigr).
\end{equation}
In the conditioning the walk must avoid the retained path before
its tip, but may return to the tip. Equivalently, one may forbid
positive-time returns to the tip: the loops at the tip contribute the
common factor $G_{\delta,U\setminus\lambda[0,k)}(v_k,v_k)$, which
cancels after normalization. The ordinary Green function is
\begin{equation}\label{eq:ordinary-green}
 G_{\delta,U}(x,y)=\EE^x\sum_{j=0}^{T_U-1}\ind\{S_j=y\}
     =\sum_{j\ge0}\PP^x(S_j=y,\ j<T_U).
\end{equation}
It includes the time-zero visit; on the unit lattice we abbreviate it
as $G_U$. The Green bridges of Section~\ref{sec:sharp-box-counting}
are path measures normalized by this sum.

\subsection{Natural parametrization and the classical limit}
\label{sec:natural-background}

We now specify the natural time assigned to each LERW edge and recall the convergence results for our two radial settings.

The raw walk runs on the diffusive clock, with time $\delta^2$ per
step, whereas LERW is parametrized by its natural length:
\[
 c_\delta=\begin{cases}c_*^{-1}\delta^{5/4},&d=2,\\
                         \delta^\beta,&d=3,\end{cases}
 \qquad \eta^\delta(jc_\delta)=\gamma_j^\delta,
 \qquad t_{\eta^\delta}=c_\delta|\gamma^\delta|,
\]
with linear interpolation. Here $c_*$ is the planar normalization
in~\cite{LVrad}, and  $\beta\in[3/2,5/3]$ is the three-dimensional LERW growth exponent introduced above. The limiting curves are parametrized by the
corresponding constant multiples of Minkowski content. For
finite-duration continuous curves we use
\[
 \rho(\eta,\widetilde\eta)=|t_\eta-t_{\widetilde\eta}|+
 \sup_{0\le s\le1}|\eta(st_\eta)-\widetilde\eta(st_{\widetilde\eta})|.
\]
This metric has the same convergent sequences as the
Lawler--Viklund metric, which allows time changes uniformly close to
the identity.

In dimension two we need the following version of the radial
convergence of~\cite{LVrad}, in which the exit point is not prescribed.

\par\nopagebreak[4]
\begin{lemma}
\label{lem:planar-free-natural}
In the unit disk, $\eta^\delta$ converges to the reversal of radial
${\rm SLE}_2$ from a uniformly chosen boundary point to $0$, in natural
parametrization by $5/4$-dimensional Minkowski content.
\end{lemma}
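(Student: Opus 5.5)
The plan is to reduce Lemma~\ref{lem:planar-free-natural} to the radial convergence of Lawler--Viklund~\cite{LVrad}, which is stated for LERW from $0$ to $\partial D$ conditioned on (or started from) a prescribed boundary point. Equivalently, it is stated in the reversed direction, from a boundary point to an interior point, using reversibility of the law of LERW~\cite[Section~11.2]{LL}. We then average over the exit vertex. The argument has three steps.

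\textbf{Step 1: decomposition by exit point.} Decompose $\PP_\delta$ according to the exit vertex $z_\delta=\gamma^\delta_{|\gamma^\delta|}\in\partial D_\delta$. Conditionally on $z_\delta=z$, the path $\gamma^\delta$ is LERW from $0$ conditioned to exit at $z$. By reversibility, its reversal is LERW from $z$ to $0$ in $D_\delta$, which is the object in~\cite{LVrad}, parametrized by $5/4$-dimensional Minkowski content with the constant $c_*$.

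\textbf{Step 2: the exit point.} The law of $z_\delta$ is the discrete harmonic measure from $0$. By the invariance principle, together with the fact that Brownian motion exits the disk at a uniform point, this law converges weakly to the uniform measure on $\partial D$.

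\textbf{Step 3: uniformity and averaging.} We need convergence in the metric $\rho$ that is uniform in the boundary point: for every $\epsilon>0$,
\[
\sup_{z\in\partial D_\delta}\ \mathrm{d}_{\rm LP}\bigl(\mathcal L(\eta^\delta\mid z_\delta=z),\ \mu_{\hat z}\bigr)\to 0,
\]
where $\hat z$ is the projection of $z$ to $\partial D$ and $\mu_{\hat z}$ is the law of reversed radial ${\rm SLE}_2$ from $\hat z$ to $0$. Granting this, we combine it with two further facts:
\begin{itemize}
\item the map $\hat z\mapsto\mu_{\hat z}$ is continuous, since by rotation invariance it is a rotation of a fixed law;
\item $\mathcal L(z_\delta)$ converges to the uniform measure (Step 2).
\end{itemize}
A standard mixing argument then shows that $\mathcal L(\eta^\delta)=\sum_z \PP(z_\delta=z)\,\mathcal L(\eta^\delta\mid z_\delta=z)$ converges to $\int\mu_{\hat z}\,\tfrac{|\dd\hat z|}{2\pi}$. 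This limit is the stated law.

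\textbf{Main obstacle: the uniformity in Step 3.} Uniformity in $z$ is where I expect the work to be. The result~\cite{LVrad} concerns fixed domains and fixed boundary approximations, while here the lattice disk near $z$ varies with $z$ and $\delta$. I would argue by compactness. Suppose uniformity fails along a sequence $z_n$. Pass to a subsequence with $\hat z_n\to\hat z$, and rotate so that the target point is fixed. The Lawler--Viklund argument is robust under Carathéodory convergence of the domains with a marked boundary prime end: their estimates use only Green function asymptotics and the martingale observable, which are uniform over such families. Applying it along the subsequence gives a contradiction.

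There is one technical point: in~\cite{LVrad} the LERW may include a final boundary edge, or may be defined via an excursion-type conditioning. Any such difference changes only a single edge and a time of order $c_\delta$, so it is negligible in $\rho$. Likewise, the radial ${\rm SLE}_2$ from $\hat z$ to $0$ stopped on hitting $0$ has a.s.\ finite Minkowski content, so the total durations converge jointly with the paths.
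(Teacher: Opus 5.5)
Your exit-edge decomposition, reversal, compactness argument for uniformity, and averaging against the exit law reproduce the first half of the paper's proof. There is a genuine gap, though: the paper carries out those steps for the standard inner union-of-squares approximation of the disk, which is where it applies \cite[Theorem~1.1]{LVrad}. Your Step 1 instead asserts that the reversed, conditioned LERW in $D_\delta=D\cap\delta\ZZ^2$ is ``the object in \cite{LVrad}''. Here the walk runs until its first vertex outside the open disk, so $D_\delta$ is a different, larger lattice domain.

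You notice this mismatch in Step 3, but you patch it by asserting that the Lawler--Viklund argument is robust under Carath\'eodory convergence of marked domains. That is an unverified claim about the internals of \cite{LVrad}, in effect a re-proof of their theorem under weaker hypotheses. Carath\'eodory convergence viewed from $0$ gives no control of the boundary near the marked point. That is exactly where the reversed curve starts, and where natural-length estimates depend on the local boundary geometry. Also, rotations are not lattice symmetries, so ``rotating so that the target is fixed'' is available only on the continuum side, through rotation-continuity of radial ${\rm SLE}_2$.

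What is missing is a comparison between the two discretizations, which is the second half of the paper's proof. Couple the walk in $D_\delta$ with the walk in the inner approximation until the latter exits. The two boundaries are $O(\delta)$ apart, so the discrete Beurling estimate confines the extra walk segment to an $\varepsilon$-neighborhood of the first exit point, with probability tending to one. That segment can only erase a terminal piece of the old LERW. The natural duration of that piece tends to zero as $\varepsilon\downarrow0$, because the limiting arc meets $\partial D$ only at its endpoint. The new suffix lies in the boundary strip, where \cite[Theorem~5.6(1) and Lemma~3.13]{BM} bound its expected natural time by $C_\theta\varepsilon^{5/4-\theta}$. This uses $N^2\operatorname{Es}(N)=O(N^{5/4})$ to match the normalization $c_\delta$. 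Common prefixes plus vanishing terminal pieces then give convergence in $\rho$ for $D_\delta$. Without this step, or an actual proof of the uniformity you assert, your argument does not reach the discretization in the statement.
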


\begin{proof}
We first apply the known natural-parametrization convergence theorem to the standard inner approximation of the disk, and then show that replacing this approximation by \(D_\delta\) changes the LERW only by terminal pieces of vanishing natural duration.
For the inner union-of-squares approximation, condition on the exit
edge and reverse the LERW. Apply~\cite[Theorem~1.1]{LVrad}; convergence
is uniform over the starting edges by compactness of the circle and
rotation-continuity of radial ${\rm SLE}_2$. Averaging against the exit
distribution, which tends to uniform arclength, gives the claim for
that approximation. To pass to $D_\delta$, couple the two walks until
exit. The boundaries of the two approximating domains differ by $O(\delta)$, so the discrete Beurling
estimate~\cite{Kesten1987,LLBeurling}
confines the added walk segment to an $\varepsilon$-neighborhood of
the exit point with probability tending to one. It can erase only a
terminal part of the old LERW; natural convergence and the fact that
the limiting arc touches the boundary only at its endpoint imply that the natural duration of the erased terminal segment tends to zero as $\varepsilon\downarrow0$. On the confinement
event the added suffix lies in the boundary strip; the estimate
\cite[Theorem~5.6(1) and Lemma~3.13]{BM} gives expected natural LERW
time at most
$C_\theta\varepsilon^{5/4-\theta}$ for $0<\theta<5/4$.
For $N=\delta^{-1}$, the bound $N^2\operatorname{Es}(N)=O(N^{5/4})$
follows from natural-time
tightness for the inner approximation and the lower-tail bound
\cite[Theorem~1.2 and Proposition~6.2(2)]{BM}.
The common prefixes and the vanishing terminal pieces give convergence
in the natural curve metric, also for arbitrary meshes.
\end{proof}

In dimension three we use the following result. Convergence in natural parametrization along dyadic meshes was
proved by Li--Shiraishi~\cite{LS18}; for arbitrary meshes, in the
present setting (unit ball, start at $0$, free exit), see
\cite[Theorem~1.7]{HLS}.

\par\nopagebreak[4]
\begin{proposition}\label{prop:3d-natural}
In the unit ball, $\eta^\delta$ converges in the natural curve topology
for arbitrary meshes $\delta\downarrow0$.
\end{proposition}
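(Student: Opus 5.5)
The statement is essentially a citation, so I plan to verify that the cited theorem applies to our exact setting rather than prove convergence from scratch. The setting is the unit ball $D=B(0,1)\subset\RR^3$, the walk on $\delta\ZZ^3$ started at $0$, stopped at the first exit from $D_\delta=D\cap\delta\ZZ^3$, with free exit point, and time $\delta^\beta$ per LERW edge.

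\textbf{Dyadic meshes.} For $\delta=2^{-n}$ I would invoke Li--Shiraishi~\cite{LS18}. That paper gives convergence of $\eta^\delta$ in the natural curve metric to a limit parametrized by a constant multiple of $\beta$-dimensional Minkowski content. First I would check that their domain convention matches ours: the lattice ball $D_\delta$ with the exit edge included and the interpolation $\eta^\delta(jc_\delta)=\gamma^\delta_j$. Any discrepancy should only concern the last edge, which has natural duration $\delta^\beta\to0$. Second, I would check that their uniform metric with time change is equivalent to $\rho$ here, as remarked in Section~\ref{sec:natural-background}.

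\textbf{Arbitrary meshes.} Here I would cite~\cite[Theorem~1.7]{HLS}, which treats exactly this configuration (unit ball, start at $0$, free exit). The underlying mechanism is worth recording, because it is the step that is not automatic. Along a general sequence $\delta_k\downarrow0$, tightness in $\rho$ follows from the H\"older and length-tail estimates~\cite{LSHolder,LS18}. The harder part is uniqueness of subsequential limits.

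\textbf{Uniqueness, the main obstacle.} Kozma's trace convergence~\cite{K} holds along arbitrary meshes by scaling, so the spatial trace law is fixed. It then remains to show that the natural time is determined by the trace, namely that $\delta^\beta|\gamma^\delta|$ restricted to subarcs converges to $c\,\mathrm{Cont}_\beta$ of the limiting subarcs. My plan for this is a second-moment/local-coupling argument. Compare the expected number of LERW points in small boxes at mesh $\delta$ with the one-point function asymptotics $\PP(x\in\gamma^\delta)\sim c\,\delta^{3-\beta}g(x)$ of~\cite{LSOnePoint}, uniformly in $\delta$ (not only dyadic). Then a variance bound of the type in~\cite{LS18} shows that the rescaled count concentrates on the Minkowski content.

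If this cannot simply be quoted, it is the one place where work is needed: the uniformity of the one-point estimate over non-dyadic meshes. I expect this to follow from the up-to-constants Green function comparisons, since rescaling changes the lattice only through the ratio $\delta/\delta'$.
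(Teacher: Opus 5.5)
Your proposal matches the paper, which proves this statement purely by citation: Li--Shiraishi~\cite{LS18} for dyadic meshes, and \cite[Theorem~1.7]{HLS} for arbitrary meshes in exactly this setting (unit ball, start at $0$, free exit). One correction to your optional aside: Kozma's trace convergence~\cite{K} is proved along $\delta=2^{-n}$, and it does not extend to arbitrary meshes merely ``by scaling''; the passage to general meshes is part of what the cited theorem supplies, but since your argument rests on that citation, its correctness is unaffected.
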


\subsection{Dirichlet operators and heat traces}
\label{sec:loop-background}\label{sec:operators}

We define the discrete operators, the Dirichlet operators on curve
complements, and the heat traces used for loop masses. Recall that the generator
is normalized as in~\eqref{eq:generator}.

Extending $f:U\to\RR$ by zero as $\widetilde f$, our discrete
operators are
\begin{equation}\label{eq:discrete-generator}
 L_{\delta,U}f(x)=\frac1{2d\delta^2}
   \sum_{|y-x|=\delta}[\widetilde f(y)-f(x)],\qquad
 A_{\delta,U}=-L_{\delta,U}=\delta^{-2}(I-p_{\delta,U}).
\end{equation}
For any open $U\subset D$, the positive continuum operator
$A_U=-\Delta_U/(2d)$ is associated with the closed Dirichlet form
\begin{equation}\label{eq:dirichlet-form}
 \mathcal E_U(f,g)=\frac1{2d}\int_U\nabla f\cdot\nabla g\dd x,
 \qquad f,g\in H^1_0(U).
\end{equation}
Here $H^1_0(U)$ is the $H^1$ closure of $C_c^\infty(U)$, with zero
extension to $D$. This defines Dirichlet boundary conditions on
arbitrary slits; see~\cite[Section~2]{BBcapacity} for the capacitary
formulation.
Compact embedding gives eigenvalues
$0<\lambda_1(U)\le\lambda_2(U)\le\cdots$, counted with multiplicity, and
\begin{equation}\label{eq:rayleigh-convention}
 \lambda_1(U)=\inf_{\substack{f\in H^1_0(U)\\\|f\|_2=1}}
                  \frac1{2d}\int_U|\nabla f|^2\dd x.
\end{equation}
The killed Brownian density $p_U(t,x,y)$ has semigroup $e^{-tA_U}$
and heat trace
\begin{equation}\label{eq:heat-trace-definition}
 H_U(t)=\Tr(e^{-tA_U})=\int_U p_U(t,x,x)\dd x
                    =\sum_{j\ge1}e^{-t\lambda_j(U)}.
\end{equation}
We put $H_\varnothing=0$, $\lambda_1(\varnothing)=\infty$.

\subsection{Loop measure conventions}
\label{sec:loop-measures}

We fix the conventions for rooted and unrooted loops, for both the
random-walk and the Brownian loop measure, with the diffusive clock
used throughout.

A rooted loop
$\omega=(x_0,\ldots,x_n=x_0)$ has mass
$n^{-1}\prod_{j=0}^{n-1}p_{\delta,U}(x_j,x_{j+1})$; forgetting the
root defines unrooted loop measure. The total length-$n$ mass is
$\Tr(p_{\delta,U}^n)/n$. A $k$-fold repetition has $n/k$ distinct
roots and therefore mass equal to its walk weight divided by $k$
\cite[Sections~9.2--9.5]{LL}.
Its diffusive duration is $t=n\delta^2$, so
$a_\delta^n=e^{\nu_\delta t}$ with
$\nu_\delta=\delta^{-2}\log a_\delta\to m^2$.
Vertex traces and their interpolations differ by $O(\delta)$;
matrix restrictions always remove vertices.

Brownian loop measure, with the same clock, is defined by
\begin{equation}\label{eq:bridge-loop-definition}
 \int F(\ell)\mu_U(\dd\ell)=\int_0^\infty\frac{\dd t}{t}
       \int_U p_U(t,x,x)\EE_{x,x}^{U,t}[F(\ell)]\dd x,
\end{equation}
for nonnegative root-independent functionals $F$, where
$\EE_{x,x}^{U,t}$ is the probability bridge of the killed process; see~\cite{LW04} and~\cite[Section~2]{SS}.
In particular,
\begin{equation}\label{eq:loop-trace}
 \mu_D\{t_\ell\in\dd t\}=H_D(t)\frac{\dd t}{t}.
\end{equation}
The restriction property identifies loops avoiding $K$ with loops in
$D\setminus K$, and therefore
\begin{align}
 \mu_D\{t_\ell\in\dd t,\ \ell\cap K=\varnothing\}
   &=H_{D\setminus K}(t)\frac{\dd t}{t},\nonumber\\
 \mu_D\{t_\ell\in\dd t,\ \ell\cap K\ne\varnothing\}
   &=[H_D(t)-H_{D\setminus K}(t)]\frac{\dd t}{t}.
 \label{eq:loop-hit-avoid-traces}
\end{align}
These identities express the integrals
\eqref{eq:I}--\eqref{eq:J} through heat traces; see
Section~\ref{sec:compensated-background}.

\section{Loop identities and inputs for convergence}
\label{sec:construction}

In this section, we prove the loop identities behind the construction in
Section~\ref{sec:intro-renormalization} and then state the covering and
support estimates used to pass to the limit. These estimates are used
in Section~\ref{sec:transfer} and proved in
Section~\ref{sec:sharp-box-counting} and Appendix~\ref{app:spectral}.

\subsection{The loop-erasure identity and the discrete spectral cutoff}
\label{sec:finite}

We first prove the loop-erasure identity
\eqref{eq:conditional-weight} and show that
Definition~\ref{def:extended} agrees with the boosted walk whenever the
latter can be normalized.

\par\nopagebreak[4]
\begin{proposition}\label{prop:agreement}
If $a_\delta\rho(p_{\delta,D_\delta})<1$, the extended law is the loop
erasure of the normalized walk in \eqref{eq:walk-partition}.
For a simple first-exit path $\gamma$ with interior trace $K$, its unnormalized
density relative to ordinary LERW is the conditional expectation
$w_{\delta,m}(\gamma)$ in \eqref{eq:conditional-weight}.

\end{proposition}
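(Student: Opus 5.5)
The plan is to prove the identity path by path, using the standard last-visit decomposition of a walk according to its loop erasure, with the per-step weight $q=a_\delta/(2d)$ in place of $1/(2d)$. Fix a simple first-exit path $\gamma=(\gamma_0,\dots,\gamma_\ell)$ with interior trace $K=\{\gamma_0,\dots,\gamma_{\ell-1}\}$. By the chronological indices \eqref{eq:chronological-indices}, every walk $\omega$ with $\LE(\omega)=\gamma$ decomposes uniquely as
\[
 \omega=\omega^0\oplus[\gamma_0,\gamma_1]\oplus\omega^1\oplus\cdots\oplus[\gamma_{\ell-1},\gamma_\ell],
\]
where $\omega^k$ is a loop rooted at $\gamma_k$ staying in $D_\delta\setminus\{\gamma_0,\dots,\gamma_{k-1}\}$. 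Conversely, any such choice of loops gives a walk with loop erasure $\gamma$. Since the $q$-weight is multiplicative, the total $q$-weight of $\{\omega:\LE(\omega)=\gamma\}$ factorizes as
\[
 q^{\ell}\prod_{k=0}^{\ell-1}G^{(a)}_{D_\delta\setminus\{\gamma_0,\ldots,\gamma_{k-1}\}}(\gamma_k,\gamma_k),
\]
where $G^{(a)}_U=\sum_n a_\delta^n p_{\delta,U}^n=(I-a_\delta p_{\delta,U})^{-1}$. This series converges because $\rho(p_{\delta,U})\le\rho(p_{\delta,D_\delta})$ for $U\subset D_\delta$ and $a_\delta\rho<1$.

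Next I would apply the standard identity \cite[Section~9.5]{LL} (Fomin / Lawler's loop-measure formula): for nested sets $U_k=D_\delta\setminus\{\gamma_0,\dots,\gamma_{k-1}\}$,
\[
 \prod_{k}G^{(a)}_{U_k}(\gamma_k,\gamma_k)=\frac{\det(I-a_\delta p_{\delta,D_\delta\setminus K})}{\det(I-a_\delta p_{\delta,D_\delta})}.
\]
This follows by telescoping Cramer's rule, since $G_U(x,x)=\det(I-a p_{U\setminus x})/\det(I-ap_U)$. Using $-\log\det(I-ap_U)=\sum_{n\ge1}a^n\Tr(p_U^n)/n$, which is valid because $a\rho(p_U)<1$, the product equals $\exp\{\Lambda_a(D_\delta)-\Lambda_a(D_\delta\setminus K)\}$, where $\Lambda_a(U)=\sum_n a^n\Tr(p_U^n)/n$. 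The same formula with $a=1$ gives the ordinary LERW probability $\PP_\delta(\gamma^\delta=\gamma)=(2d)^{-\ell}\exp\{\Lambda_1(D_\delta)-\Lambda_1(D_\delta\setminus K)\}$; here $\rho(p_{\delta,U})<1$ holds trivially for finite $U$.

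Dividing the two, and using that $\Lambda_a-\Lambda_1=\mathcal M_{\delta,m}$ termwise (all series are absolutely convergent), I get
\[
 \sum_{\LE(\omega)=\gamma}q^{|\omega|}=\PP_\delta(\gamma^\delta=\gamma)\,a_\delta^{\ell}\exp\{\mathcal M_{\delta,m}(D_\delta)-\mathcal M_{\delta,m}(D_\delta\setminus K)\}.
\]
The left side equals $\EE^0[a_\delta^{T_\delta};\gamma^\delta=\gamma]$, so dividing by $\PP_\delta(\gamma^\delta=\gamma)$ gives \eqref{eq:conditional-weight}. Summing over $\gamma$ shows that the law of the loop erasure of the normalized boosted walk has density $w_{\delta,m}/Z^{\rm RW}_{\delta,m}$ relative to $\PP_\delta$. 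Since $\mathcal M_{\delta,m}(D_\delta)<\infty$, this density is a constant multiple of $W_{\delta,m}$ from Definition~\ref{def:extended}, and in particular $Z_{\delta,m}>0$. After normalization the two laws agree.

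The main point requiring care is the convergence and rearrangement step. I need finiteness of $\mathcal M_{\delta,m}(D_\delta\setminus K)$, which follows from spectral radius monotonicity under vertex removal (Perron–Frobenius for substochastic nonnegative matrices). I also need to justify splitting $\Lambda_a-\Lambda_1$ termwise; this is fine because all terms are nonnegative and each series converges. The determinant identity itself is standard, so I would cite it rather than reprove it.
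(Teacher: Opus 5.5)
Your proposal is correct and follows the paper's own proof. The paper likewise applies the loop-erasure identity \cite[Proposition~9.5.1]{LL} with per-step weight $c(2d)^{-1}$ for $c\in\{1,a_\delta\}$, which gives the product of Green's functions on the nested sets $U_j$ and its determinant ratio. It then divides the boosted identity by the ordinary one, expands the logarithms, and lets the $\gamma$-independent factor $e^{\mathcal M_{\delta,m}(D_\delta)}$ cancel on normalization. Your version just writes out explicitly the steps the paper leaves to that citation: the last-visit decomposition, the Cramer telescoping, and the convergence and positivity checks.
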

\begin{proof}

For $\gamma=(v_0,\ldots,v_l)$, put
$K=\{v_0,\ldots,v_{l-1}\}$ and
$U_j=D_\delta\setminus\{v_0,\ldots,v_{j-1}\}$.
For $c\in\{1,a_\delta\}$, let
$q(x,y)=c(2d)^{-1}\ind_{\{|x-y|=\delta\}}$, including on the exit edge,
and let $Q_U$ be its restriction to $U$. The standard loop-erasure identity
\cite[Proposition~9.5.1]{LL}, with the exit vertex absorbing, gives
\begin{equation}\label{eq:fp-finite-path-sum}
 \sum_{\LE(\omega)=\gamma}q(\omega)
 =q(\gamma)\prod_{j<l}[(I-Q_{U_j})^{-1}](v_j,v_j)
 =q(\gamma)\frac{\det(I-Q_{D_\delta\setminus K})}
                  {\det(I-Q_{D_\delta})}.
\end{equation}

The sum is over first-exit walks and converges by the spectral
assumption. Dividing the boosted and ordinary identities and expanding
the logarithms gives \eqref{eq:conditional-weight}. The factor
depending only on loops in $D_\delta$ cancels on normalization,
leaving $W_{\delta,m}$.
\end{proof}

Beyond this range the extended weight is still defined. The next
proposition identifies the paths with positive weight and expresses
the weight through determinants.

\par\nopagebreak[4]
\begin{proposition}\label{prop:finite-det}
Write $\rho(Q)$ for the spectral radius of a matrix $Q$.
For $m>0$, $W_{\delta,m}(\gamma)>0$ precisely when
$a_\delta\rho(p_{\delta,D_\delta\setminus K})<1$.
For every finite $U$ satisfying $a_\delta\rho(p_{\delta,U})<1$,
\begin{equation}\label{eq:finite-det}
 e^{-\mathcal M_{\delta,m}(U)}=
 \frac{\det(I-a_\delta p_{\delta,U})}{\det(I-p_{\delta,U})}.
\end{equation}
\end{proposition}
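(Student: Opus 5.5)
The plan is to work directly with the series defining $\mathcal M_{\delta,m}(U)$ in \eqref{eq:discrete-mass} for a finite set $U$. Write $P=p_{\delta,U}$; it is a symmetric nonnegative matrix, so its spectrum is real and contained in $[-\rho(P),\rho(P)]$, with $\rho(P)<1$ for every finite $U$ because the walk is killed on leaving $U$. Both claims then follow from an eigenvalue-by-eigenvalue analysis of
\[
 \mathcal M_{\delta,m}(U)=\sum_{n\ge1}\frac{a_\delta^n-1}{n}\Tr(P^n)
 =\sum_{\mu\in\operatorname{spec}(P)}\sum_{n\ge1}\frac{(a_\delta\mu)^n-\mu^n}{n}.
\]
All terms $\Tr(P^n)$ are nonnegative, since $P$ has nonnegative entries, and $a_\delta^n-1>0$ for $m>0$. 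The sum is therefore a series of nonnegative terms with a well-defined value in $[0,\infty]$.

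For the determinant identity \eqref{eq:finite-det}, assume $a_\delta\rho(P)<1$. Then every eigenvalue satisfies $|a_\delta\mu|<1$ and $|\mu|<1$, so each inner series converges absolutely to $-\log(1-a_\delta\mu)+\log(1-\mu)$. The double sum converges absolutely, which justifies interchanging the sum over $n$ with the finite sum over eigenvalues. Summing over the spectrum gives
\[
 \mathcal M_{\delta,m}(U)=-\log\det(I-a_\delta P)+\log\det(I-P),
\]
and exponentiating yields \eqref{eq:finite-det}. Both determinants are positive products, so no branch issues arise.

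For the positivity criterion, note that $W_{\delta,m}(\gamma)>0$ if and only if $\mathcal M_{\delta,m}(D_\delta\setminus K)<\infty$, because $a_\delta^{|\gamma|}$ is finite and positive. If $a_\delta\rho(P)<1$ with $U=D_\delta\setminus K$, finiteness follows from the previous paragraph; the case $U=\varnothing$ is covered by the convention that the empty domain has mass zero. Conversely, suppose $a_\delta\rho(P)\ge1$. By Perron--Frobenius, $\rho(P)$ is itself an eigenvalue $\mu_*=\rho(P)$ of the nonnegative matrix $P$. Consider the lower bound
\[
 \Tr(P^{2n})\ge\mu_*^{2n},
\]
which holds because all eigenvalues are real, so $\Tr(P^{2n})=\sum\mu^{2n}\ge\mu_*^{2n}$. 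Restricting to even $n$ (all terms being nonnegative) gives $\mathcal M\ge\sum_{n}\frac{(a_\delta\mu_*)^{2n}-\mu_*^{2n}}{2n}$. Here $(a_\delta\mu_*)^{2n}\ge1$ while $\mu_*^{2n}$ is summable because $\mu_*<1$, so the series diverges like the harmonic series and $\mathcal M=\infty$, hence $W=0$.

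I expect the only delicate point to be this divergence at the threshold $a_\delta\rho(P)=1$ exactly. The even-index trick handles it without any irreducibility assumption (the set $D_\delta\setminus K$ may be disconnected), and it also sidesteps possible cancellation from negative eigenvalues, since the lattice is bipartite and $-\rho$ is also an eigenvalue, which makes the odd traces vanish.
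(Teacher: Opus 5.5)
Your proposal is correct and follows the paper's proof. The determinant identity is the expansion $\log\det(I-Q)=-\sum_{n\ge1}\Tr(Q^n)/n$ for $\rho(Q)<1$, which you verify eigenvalue by eigenvalue, and the divergence at or beyond the threshold comes from the even-trace bound $\Tr(p_{\delta,U}^{2k})\ge\rho(p_{\delta,U})^{2k}$; you restrict to even $n$ using nonnegativity of all terms, where the paper uses the vanishing of odd traces, and the two are equivalent.
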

\begin{proof}
Use $\log\det(I-Q)=-\sum_{n\ge1}\Tr(Q^n)/n$ for $\rho(Q)<1$.
Bipartiteness gives zero odd traces and
$\Tr(p_{\delta,U}^{2k})\ge\rho(p_{\delta,U})^{2k}$, so at or beyond
the threshold the even terms in \eqref{eq:discrete-mass} diverge.
If $\rho(p_{\delta,U})=0$, the loop mass is zero.
\end{proof}

Combining \eqref{eq:finite-det} with the ordinary LERW path weight gives
\[
 \PP_\delta^{(m)}(\gamma)\ \propto\
 \left(\frac{a_\delta}{2d}\right)^{|\gamma|}
 \det(I-a_\delta p_{\delta,D_\delta\setminus K})
 \ind_{\{a_\delta\rho(p_{\delta,D_\delta\setminus K})<1\}}.
\]

\subsection{Exact compensation identities}
\label{sec:compensated-background}

We now write the short- and long-loop terms of the second
normalization~\eqref{eq:intro-compensated-discrete} explicitly and
verify it at each fixed mesh, including for paths of zero weight.

\par\nopagebreak[4]
\begin{lemma}\label{lem:discrete-comp}
For a simple exit path $\gamma$ with interior trace $K$, set
\begin{align*}
 I_{\delta,m,T}(K)&=\sum_{n\delta^2\le T}\frac{a_\delta^n-1}{n}
 [\Tr(p_{\delta,D_\delta}^n)-\Tr(p_{\delta,D_\delta\setminus K}^n)],\\
 J_{\delta,m,T}(K)&=\sum_{n\delta^2>T}\frac{a_\delta^n-1}{n}
          \Tr(p_{\delta,D_\delta\setminus K}^n).
\end{align*}
Then $Y_{\delta,m,T}(\gamma)
=a_\delta^{|\gamma|}\exp\{I_{\delta,m,T}(K)-J_{\delta,m,T}(K)\}$
is $W_{\delta,m}(\gamma)$ times the finite deterministic factor
$e^{C_{\delta,m,T}}$, where
\[
 C_{\delta,m,T}=\sum_{n\delta^2\le T}
       \frac{a_\delta^n-1}{n}\Tr(p_{\delta,D_\delta}^n).
\]
Hence the two weights define the same normalized law whenever the normalizing constant is positive. The identity also holds when the long-loop mass is infinite, in which case both weights vanish.
\end{lemma}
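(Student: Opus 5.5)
The plan is to split the loop mass $\mathcal M_{\delta,m}(D_\delta\setminus K)$ of Definition~\ref{def:extended} by diffusive duration and to check that the resulting terms combine into the stated identity. The bookkeeping must be done so that it stays valid when $\mathcal M_{\delta,m}(D_\delta\setminus K)=\infty$.

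\emph{Finiteness of the short-loop sums.} Since $D_\delta$ is finite and $\{n:n\delta^2\le T\}$ is a finite index set, $C_{\delta,m,T}$ is a finite sum of finite nonnegative terms. The same holds for the two pieces of $I_{\delta,m,T}(K)$, using $\Tr(p_{\delta,U}^n)\le|U|$. Hence $C_{\delta,m,T}<\infty$, and the short part
\[
\mathcal M_{\delta,m}^{\le T}(D_\delta\setminus K)=\sum_{n\delta^2\le T}\frac{a_\delta^n-1}{n}\Tr(p_{\delta,D_\delta\setminus K}^n)
\]
is finite. The long part $J_{\delta,m,T}(K)$ is a series of nonnegative terms, so it has a well-defined value in $[0,\infty]$. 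Because every term in \eqref{eq:discrete-mass} is nonnegative, splitting the series is legitimate in $[0,\infty]$:
\[
\mathcal M_{\delta,m}(D_\delta\setminus K)=\mathcal M^{\le T}_{\delta,m}(D_\delta\setminus K)+J_{\delta,m,T}(K).
\]

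\emph{The finite case.} If $J_{\delta,m,T}(K)<\infty$, every quantity is finite and the exponent rearranges directly:
\[
C_{\delta,m,T}-\mathcal M_{\delta,m}(D_\delta\setminus K)=\bigl[C_{\delta,m,T}-\mathcal M^{\le T}_{\delta,m}(D_\delta\setminus K)\bigr]-J_{\delta,m,T}(K)=I_{\delta,m,T}(K)-J_{\delta,m,T}(K).
\]
Multiplying by $a_\delta^{|\gamma|}$ and exponentiating gives $Y_{\delta,m,T}(\gamma)=e^{C_{\delta,m,T}}W_{\delta,m}(\gamma)$.

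\emph{The infinite case.} If $J_{\delta,m,T}(K)=\infty$, then $\mathcal M_{\delta,m}(D_\delta\setminus K)=\infty$ as well. Using the convention $\exp(-\infty)=0$ together with the finiteness of $I_{\delta,m,T}(K)$ and $C_{\delta,m,T}$, both weights vanish and the identity reads $0=e^{C_{\delta,m,T}}\cdot 0$. By Proposition~\ref{prop:finite-det}, this case is exactly $a_\delta\rho(p_{\delta,D_\delta\setminus K})\ge1$ when $m>0$, although the argument above does not need that characterization.

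\emph{Same normalized law.} Since $e^{C_{\delta,m,T}}\in(0,\infty)$ does not depend on $\gamma$, we have $\EE_{\PP_\delta}Y_{\delta,m,T}=e^{C_{\delta,m,T}}Z_{\delta,m}$. Thus one normalizing constant is positive exactly when the other is, and the deterministic factor cancels in the density $W_{\delta,m}/Z_{\delta,m}$.

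The only delicate step is the infinite case. There one must not subtract two infinite quantities; the split is done only in $[0,\infty]$, and every subtracted term is kept finite.
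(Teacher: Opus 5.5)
Your proposal is correct and follows the paper's proof: split \eqref{eq:discrete-mass} at $n\delta^2=T$ to get $\mathcal M_{\delta,m}(D_\delta\setminus K)=C_{\delta,m,T}-I_{\delta,m,T}(K)+J_{\delta,m,T}(K)$, note that only $J_{\delta,m,T}(K)$ can be infinite, and exponentiate. Your separate treatment of the finite and infinite cases just spells out the paper's remark that no infinities are subtracted.
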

\begin{proof}
Split \eqref{eq:discrete-mass} at $n\delta^2=T$:
$\mathcal M_{\delta,m}(D_\delta\setminus K)
=C_{\delta,m,T}-I_{\delta,m,T}(K)+J_{\delta,m,T}(K)$.
Only the last term can be infinite. Exponentiation gives the claim.
\end{proof}

For the continuum weight  defined in
\eqref{eq:I}--\eqref{eq:Y}, the restriction identities
\eqref{eq:loop-hit-avoid-traces} give
\begin{align*}
 I_{m,T}(K)&=\int_0^T(e^{m^2t}-1)
       [H_D(t)-H_{D\setminus K}(t)]\frac{\dd t}{t},\\
 J_{m,T}(K)&=\int_T^\infty(e^{m^2t}-1)
       H_{D\setminus K}(t)\frac{\dd t}{t}.
\end{align*}
If $m^2<\lambda_1(D)$, adding the finite long-loop mass in $D$
recovers the uncompensated hitting expression:

\begin{equation}\label{eq:compensation-motivation}
 \Phi_m(K)=I_{m,T}(K)-J_{m,T}(K)
       +\int_T^\infty(e^{m^2t}-1)H_D(t)\frac{\dd t}{t}.
\end{equation}

For the relation to regularized determinants on smooth surfaces see
\cite{BL}; our definition uses heat traces directly on slit domains.

\subsection{Inputs for the convergence proof}
\label{sec:inputs}\label{sec:main-inputs}
We now state the covering estimate needed for integrability and the support estimate needed for a positive normalizing constant.

Fix a grid
$\mathcal Q_r=\{r(z+[0,1]^d):z\in\ZZ^d\}$ and write
\[
 N_r(K)=\#\{Q\in\mathcal Q_r:Q\cap K\ne\varnothing\}.
\]
Thus $N_r(K)$ measures the trace at resolution $r$.
Write
\begin{equation}\label{eq:b23}
 b_2=\frac54,\qquad b_3=\beta\in[3/2,5/3],
\end{equation}
where $\beta$ is the three-dimensional LERW growth exponent;
the sharp growth estimates and its identification with the scaling
limit's Hausdorff dimension are given in
\cite[Corollary~1.3]{LSOnePoint} and~\cite{ShiraishiDim}.
For $0<\alpha<2$, define
\begin{equation}\label{eq:box-content-definition}
 M_{\alpha,\delta}(K)=
 \sup_{\substack{r\in\{1,2^{-1},2^{-2},\ldots\}\\r\geq\delta}}
       r^\alpha N_r(K),\qquad
 M_\alpha(K)=\sup_{r\in\{1,2^{-1},2^{-2},\ldots\}}r^\alpha N_r(K).
\end{equation}
The restriction $r\ge\delta$ appears only in the discrete content.

The geometric input is the following box-counting estimate. The proof is given in
Section~\ref{sec:sharp-box-counting}.

\begin{theorem}
\label{thm:covering-input}
In each of the radial settings fixed in Section~\ref{sec:model},
there are constants $c,C>0$ and $\delta_0>0$ such that
\begin{equation}\label{eq:sharp-fixed-scale-input}
 \PP_\delta\{N_r(K^\delta)>u r^{-b_d}\}
 \le C e^{-cu},
 \qquad \delta\le r\le1,\quad u\ge1,
 \quad0<\delta<\delta_0.
\end{equation}
For every $b_d<\alpha<2$ and every $\lambda>0$,
\begin{equation}\label{eq:EB}
 \sup_{0<\delta<\delta_0}\EE_{\PP_\delta}
       \exp\{\lambda M_{\alpha,\delta}(K^\delta)\}<\infty.
\end{equation}
Here $K^\delta$ is the linearly interpolated trace of ordinary LERW.
The constants in the fixed-scale estimate are also uniform over
translations of the box grid.
\end{theorem}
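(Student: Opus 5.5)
The plan is to prove the fixed-scale tail bound \eqref{eq:sharp-fixed-scale-input} by a high-moment estimate, and then deduce \eqref{eq:EB} by a union bound over dyadic scales. For the moments, fix $r\in[\delta,1]$ and a translate of the grid $\mathcal Q_r$, and write $N_r(K^\delta)=\sum_Q \ind\{Q\cap K^\delta\ne\varnothing\}$. I aim to show
\[
 \EE_{\PP_\delta}\bigl[N_r(K^\delta)^k\bigr]\le C^k k!\,r^{-kb_d}\qquad\text{for all }k\ge1,
\]
uniformly in $\delta\le r$ and in grid translations. Summing $\lambda^k\EE N_r^k/k!$ for $\lambda<1/C$ then gives an exponential moment of $r^{b_d}N_r$, and Markov's inequality gives the bound $Ce^{-cu}$. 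Since $K^\delta$ is interpolated, boxes met only by an edge are handled by enlarging each box by $\delta\le r$; this changes only constants.

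The main step is the $k$-point box estimate. Expanding $\EE N_r^k$ as a sum over ordered $k$-tuples of boxes $Q_1,\dots,Q_k$ met by $\gamma^\delta$, I would bound, following Barlow--Masson~\cite{BM} and Shiraishi~\cite{Shiraishi},
\[
 \PP_\delta\{\gamma^\delta\cap Q_i\ne\varnothing\ \forall i\}
 \le C^k\sum_{\sigma}\prod_{i=1}^k\Bigl(\frac{r}{|x_{\sigma(i)}-x_{\sigma(i-1)}|\vee r}\Bigr)^{d-b_d},
\]
where $x_0=0$ and $\sigma$ ranges over the orders in which the final LERW first enters the boxes. Summing over box positions, the shells at distance $2^jr$ from the previous box contribute $\sum_j (2^j)^{d}(2^{-j})^{d-b_d}\lesssim r^{-b_d}$ per step, with the largest scale dominating. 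The $k!$ orders then give $C^kk!\,r^{-kb_d}$.

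To prove the product bound, I would use the decomposition described in the outline. For each box in the chosen order, mark the last visit $t_{Q}$ of the walk to the first surviving vertex $y_Q$ of $\gamma^\delta$ in $Q$, as in \eqref{eq:surviving-first-entrance}. Summing over finite paths splits the walk at these last visits into consecutive Green-bridge pieces. Because $y_Q$ is the \emph{first} entrance of the final LERW into $Q$, the loop erasure of the incoming piece must avoid $Q$ before reaching $y_Q$. That avoidance is what converts a bare Green function factor $G(x,y)\asymp |x-y|^{2-d}$ (or a logarithm in $d=2$) into the sharp LERW one-point hitting probability of a box at distance $s$, namely $(r/s)^{d-b_d}$. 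That hitting estimate is supplied by~\cite{LV} in $d=2$ and by~\cite{LS18,HLS} in $d=3$, and is uniform in the mesh for $r\ge\delta$. Between consecutive marks I would use the domain Markov property \eqref{eq:ordinary-dmp} together with a separation/up-to-constants argument, so that the conditioning on earlier pieces costs only a factor $C$ per box.

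\textbf{Main obstacle.} I expect the hardest step to be exactly this uniform, mesh-independent quasi-multiplicativity, because the marked times are not stopping times and earlier erased arms interact with later boxes. If the direct decomposition fails, my first alternative is to bound the probability by replacing each incoming erased arm with the requirement that the LERW of the walk, up to its hit of $\partial B(x_{\sigma(i)},s/2)$, avoids $Q$. I would then invoke the escape-probability estimates $\operatorname{Es}(s/r)\asymp (s/r)^{-(2-b_d)}$ combined with a Green function bound.

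Finally, \eqref{eq:EB} follows from the fixed-scale bound. For $\alpha>b_d$ and $r=2^{-j}$,
\[
 \PP_\delta\bigl\{r^\alpha N_r>v\bigr\}\le Ce^{-cv\,2^{j(\alpha-b_d)}},
\]
so a union bound over $j$ gives $\PP_\delta\{M_{\alpha,\delta}>v\}\le C'e^{-cv}$ with $c$ independent of $\delta$. This exponent $c$ is not large enough for every $\lambda$ directly. To handle arbitrary $\lambda$, note that for scales with $r\ge r_0$ one has $r^\alpha N_r\le r^{\alpha-d}\lesssim r_0^{-d}$ deterministically. For scales below $r_0$, the tail becomes $e^{-cv\,r_0^{-(\alpha-b_d)}}$, and choosing $r_0$ small makes this rate exceed $\lambda$. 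Hence $\EE e^{\lambda M_{\alpha,\delta}}\le e^{\lambda C r_0^{-d}}\cdot C''<\infty$, uniformly in $\delta<\delta_0$.
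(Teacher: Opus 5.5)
Your overall architecture is the paper's: marking the last visit to the first surviving vertex of each cube, splitting into Green bridges, and using the box avoidance of the incoming erased arm to reduce the entrance-vertex sum to the sharp box-hitting estimates of \cite{LV} and \cite{HLS}. Your passage from $C^kk!\,r^{-kb_d}$ moments to exponential tails is correct, and so is your cutoff-scale derivation of \eqref{eq:EB}; the latter is a valid substitute for the paper's super-exponential tail $\exp\{-cu^{(d-b_d)/(d-\alpha)}\}$.

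The gap is your displayed $k$-box inequality. Its factor at each box depends only on the incoming distance $s_i$ and ignores the boundary, and the decomposition you describe does not produce it. After splitting, the raw continuation from $y_i$ is the Green bridge to $y_{i+1}$. That bridge is comparable to simple random walk only in a ball around $y_i$ that excludes $y_{i+1}$ and lies inside the domain. The erased arm entering $y_i$ is the \emph{terminal} segment of the erasure of the previous bridge. The domain Markov property governs continuations of initial segments, so it does not reach this segment, and the marks are not stopping times. The paper first uses Lawler's weight-preserving reversal bijection to turn that segment into an initial segment of the erasure of a bridge started at $y_i$. It then compares both local ends of each bridge with independent pieces by a Harnack/Doob-transform bound (Lemma~\ref{lem:bridge-two-end}).

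What this yields is Proposition~\ref{prop:sharp-ordered-boxes}. In lattice units with cube side $L$, each box contributes a Green factor $g_d(q_{i-1},q_i)$ (a logarithm in $d=2$) times $L^{d-2}(L/\rho_i)^{\theta_d}$, where $\rho_i=\min\{s_i,s_{i+1},\dist(q_i,\partial B(0,N))\}$. The escape between scales $\rho_i$ and $s_i$ is simply lost. Your inequality is heuristically the sharp multi-box function, but recovering that escape would need a multiscale cluster argument that you do not supply. Your one-shell-per-step summation also does not apply to the weaker, provable bound.

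There is a further issue with adjacent boxes. The entrance-vertex sum is $O(L^{d-2})$ rather than the trivial $O(L^{d-1})$ only because avoidance is imposed in a ball of radius at least $C_0L$ around $y_i$, disjoint from the balls at the adjacent marks. For adjacent or nearly adjacent boxes this is impossible, and the lost factor $L$ is not uniform in the mesh.

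The missing steps are these:
\begin{enumerate}
\item Split the grid into boundedly many families whose centres are $A_0L$ apart.
\item Count the $O(1)$ cubes near $0$ trivially.
\item Count the cubes within $A_0L$ of the sphere separately. The paper uses a geometric tail: from each visit there, the walk exits the domain before leaving a ball of radius $AL$ with probability bounded below.
\item Prove the $\rho_i$-bound within each family.
\item Sum it with a chain lemma that expands $\rho_i^{-\theta}\le s_i^{-\theta}+s_{i+1}^{-\theta}+h(x_i)^{-\theta}$ and sums the vertices successively.
\end{enumerate}
The last step closes precisely because $\theta_d=2-b_d<1$. Each vertex then carries an extra power at most $2\theta_d<2$, of which at most $\theta_d<1$ is boundary power, so the dyadic annuli and the boundary layers are summable. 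Exponential tails for the finitely many families, together with the near-origin and near-boundary counts, then give \eqref{eq:sharp-fixed-scale-input}.
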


The second input ensures that the limiting normalizer is positive. As it follows from classical estimates, we will postpone its proof to  Appendix~\ref{app:spectral}.

\begin{proposition}\label{prop:all-support}
For standard radial LERW in either dimension,
\begin{equation}\label{eq:all-support}
 \PP\{\lambda_1(D\setminus K)>M\}>0
 \qquad\text{for every finite }M.
\end{equation}
\end{proposition}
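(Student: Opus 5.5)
We have to show that with positive probability the trace $K$ is dense enough that every point of $D\setminus K$ lies close to $K$. The spectral input is a Poincaré/Hardy-type bound. If every $x\in D$ satisfies $\dist(x,K\cup\partial D)\le\varepsilon$, with $K$ connected at scale $\varepsilon$, then $\lambda_1(D\setminus K)\ge c\varepsilon^{-2}$.

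In dimension two this follows from the standard fact that a domain containing no disk of radius larger than $\varepsilon$ has $\lambda_1\ge c\varepsilon^{-2}$. This is the simply-connected version of the inradius bound: $D\setminus K$ is simply connected when $K$ is a simple curve from $0$ to $\partial D$, so Makai--Hayman applies. More simply, each point is within $\varepsilon$ of a continuum of diameter $\ge\varepsilon$, and the Brownian motion then hits that continuum before exiting a ball of radius $2\varepsilon$ with probability $\ge c$ (Beurling). This gives $\sup_x\EE^x T_{D\setminus K}\le C\varepsilon^2$, hence $\lambda_1\ge (C\varepsilon^2)^{-1}$ by the torsion bound $\lambda_1\ge 1/\sup_x\EE^xT$.

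In dimension three, curves are polar for Brownian motion, so we cannot use a curve hitting bound; the trace has zero capacity. I then expect $\lambda_1(D\setminus K)=\lambda_1(D)$ for curves in $d=3$. That would contradict the proposition, unless capacity is positive. But the three-dimensional LERW scaling limit has dimension $\beta>1$, so it is not polar: sets of dimension exceeding $d-2=1$ have positive Newtonian capacity. So the plan in $d=3$ uses the fact that the limiting curve has dimension $\beta>1$ with positive capacity at every scale. Concretely, for a ball $B(x,\varepsilon)$ that $K$ crosses, Brownian motion from points near $x$ hits $K\cap B(x,2\varepsilon)$ before leaving $B(x,4\varepsilon)$ with probability bounded below, on a good event. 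The same torsion argument then yields the bound.

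The probabilistic step is to produce, with positive probability, a limiting trace that visits every $\varepsilon$-ball of a fine grid in $D$ and is \emph{uniformly hittable} there. I would prescribe a deterministic tube: a space-filling-like polygonal path $\pi$ from $0$ to $\partial D$ passing within $\varepsilon/2$ of every point of $D$, thickened to width $\varepsilon/10$. Then I would show that the following event has positive $\PP$-probability: $K$ stays in the tube, follows it to within $\varepsilon/10$ in Hausdorff distance, and is hittable at each of the finitely many grid boxes.

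For the first part, LERW has full support on curves in the Hausdorff topology. In discrete form, force the random walk to follow the tube, which has probability bounded below uniformly in $\delta$ by invariance principles. Then LERW of a walk in a tube also stays in the tube and reaches its end. To force the loop erasure to traverse every segment, I would use the domain Markov property~\eqref{eq:ordinary-dmp} and reversibility: condition segment by segment, using that the conditioned walk avoiding the past path and escaping through a narrow corridor has uniformly positive probability. Hittability at each box comes from the hittability estimates of~\cite{SS} in $d=3$ and Beurling in $d=2$, which fail with small probability per box. Intersecting these events across finitely many boxes needs a quasi-independence or conditional lower bound. This multi-scale positivity, passing from the discrete to the limit via Proposition~\ref{prop:3d-natural} and Lemma~\ref{lem:planar-free-natural} with closed events and portmanteau, is the main obstacle. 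In $d=3$, lower semicontinuity of $\lambda_1$ under Hausdorff limits fails in general, so I would instead pass the hittability bound directly to the continuum using the \cite{SS} coupling.

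Finally, choose $\varepsilon$ with $c\varepsilon^{-2}>M$.
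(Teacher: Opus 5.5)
Your planar argument is sound and is a legitimate alternative to the paper's. The walk-in-a-tube event has probability bounded below uniformly in $\delta$. The condition that $K$ lies in the closed tube and satisfies $\sup_{x\in D}\dist(x,K\cup\partial D)\le\varepsilon$ is closed in the Hausdorff topology, so portmanteau passes it to the limit. Beurling then gives $\lambda_1(D\setminus K)\ge c\varepsilon^{-2}$ deterministically. You do not need the domain Markov property to make the erasure ``traverse every segment'': any path inside a non-self-overlapping tube that joins its two ends crosses every cross-section. The paper obtains the same dense connected trace with no discrete step. It confines the coupled Brownian motion to a corridor and uses $K\subset B[0,T_D]$ from Lemma~\ref{lem:containment}.

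In dimension three, however, there is a genuine gap, and it is exactly the step you call the main obstacle. Density alone is useless there, since polygonal curves are polar. You need a single event of positive probability on which $K$ is hittable at scale $\varepsilon$ near every one of the $\asymp\varepsilon^{-3}$ grid points, with a constant independent of $\varepsilon$; otherwise $c\varepsilon^{-2}$ need not exceed $M$. The tube event has probability decaying very fast in $\varepsilon$, roughly like $e^{-c\varepsilon^{-3}}$. Unconditional hittability estimates such as those of \cite{SS} fail with probability that is at best polynomially small in the scale, and per box the failure probability is a scale-invariant constant. So a union bound cannot intersect them with the tube event. Moving the hittability to a much finer scale $\epsilon_1$ to beat the tube probability does not help. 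Brownian motion from distance $\varepsilon$ is then only guaranteed to reach the $\epsilon_1$-neighbourhood of a crossing with probability of order $1/\log(\varepsilon/\epsilon_1)$, and this loss is not uniform in $\varepsilon$. What you would need is a conditional, scale-invariant capacity bound given the tube event, and neither your proposal nor the cited inputs provides it.

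The paper sidesteps this by working with Brownian events. By Lemma~\ref{lem:containment}, every global cut point of the coupled path $\mathcal B$ lies in $K$, and Brownian events concatenate by the Markov property. Lemma~\ref{lem:rooms} places scaled copies of a fixed template in rooms on an $r$-lattice. With positive probability, which need not be uniform in $r$, each room then contains cut points forming a compact $F_i\subset K$ with $\capn(F_i)\ge\kappa r$, where $\kappa$ is independent of $r$ by Brownian scaling. Lemma~\ref{lem:cap-mesh} then gives $\lambda_1(D\setminus K)\ge cr^{-2}$. The key is that the obstacle size is deterministic on the event while the event's probability may depend on $r$ arbitrarily. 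Your LERW-level approach lacks this decoupling.
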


The following theorem, together with Theorem~\ref{thm:covering-input}
and Proposition~\ref{prop:all-support}, implies
Theorem~\ref{thm:all-boosts}. We will give its proof  (as well as that of Theorem~\ref{thm:all-boosts}) in
Section~\ref{sec:transfer}.
\begin{theorem}\label{thm:extension-intro}
In either of the two radial settings, suppose that \eqref{eq:EB} holds
for some $\alpha<2$. For every fixed $m\geq0$ such that
\begin{equation}\label{eq:positive-support}
 \PP\{\lambda_1(D\setminus K)>m^2\}>0,
\end{equation}
the extended discrete laws are well defined for all sufficiently small
$\delta$ and converge in natural parametrization to
$Y_{m,T}\PP/\EE Y_{m,T}$, with $0<\EE Y_{m,T}<\infty$.
The limiting law does not depend on $T$.
\end{theorem}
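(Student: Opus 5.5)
We would prove Theorem~\ref{thm:extension-intro} by coupling and weak convergence of the compensated weights. By Proposition~\ref{prop:3d-natural} and Lemma~\ref{lem:planar-free-natural}, together with Skorokhod's representation, we realise $\eta^\delta\to\eta$ almost surely in the natural curve metric, so that $K^\delta\to K$ in Hausdorff distance. By Lemma~\ref{lem:discrete-comp} it suffices to work with $Y_{\delta,m,T}$, whose logarithm is $|\gamma^\delta|\log a_\delta+I_{\delta,m,T}-J_{\delta,m,T}$. The first term tends to zero, because $|\gamma^\delta|=t_{\eta^\delta}/c_\delta$ and $c_\delta\gg\delta^2$. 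The goals are then:
\begin{itemize}
\item[(i)] $Y_{\delta,m,T}(\gamma^\delta)\to Y_{m,T}(K)$ in probability;
\item[(ii)] the family $\{Y_{\delta,m,T}\}$ is uniformly integrable;
\item[(iii)] $\EE Y_{m,T}>0$.
\end{itemize}
Given these, $\EE Y_{\delta,m,T}\to\EE Y_{m,T}\in(0,\infty)$, so $Z_{\delta,m}>0$ for small $\delta$. For any bounded continuous $F$, $\EE[F(\eta^\delta)Y_{\delta,m,T}]\to\EE[F(\eta)Y_{m,T}]$, which gives convergence of the tilted laws. Independence of $T$ follows from \eqref{eq:cutoff-change}.

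\emph{Short loops and uniform integrability.} Fix a small $\epsilon>0$ and split $I_{\delta,m,T}$ into durations in $(0,\epsilon]$ and $(\epsilon,T]$. For loops of duration $n\delta^2\le t$, the mass of those hitting $K$ is bounded as follows. A loop of duration $t$ hitting $K$ must root within distance about $\sqrt t$ of a box of side $\sqrt t$ meeting $K$. This gives
\[
 [\Tr(p^n_{D_\delta})-\Tr(p^n_{D_\delta\setminus K})]\lesssim N_{\sqrt t}(K)\,(\sqrt t)^d\,t^{-d/2}\lesssim M_{\alpha,\delta}(K)\,t^{-\alpha/2},
\]
using the local return bound $p^n(x,x)\lesssim \delta^d t^{-d/2}$ summed over the roots. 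Multiplying by $(a^n_\delta-1)/n\approx m^2 t\,\delta^2/t$ and integrating gives
\[
 I^{\le\epsilon}_{\delta}\lesssim m^2M_{\alpha,\delta}(K)\,\epsilon^{1-\alpha/2}.
\]
This is integrable at $0$ precisely because $\alpha<2$; for durations below $\delta^2$ only the scale $r\ge\delta$ is needed, which is why $M_{\alpha,\delta}$ suffices. Consequently $I_{\delta,m,T}\le C_{m,T}M_{\alpha,\delta}(K^\delta)$. Since $J\ge0$, \eqref{eq:EB} gives $\sup_\delta\EE Y^p_{\delta,m,T}<\infty$ for every $p$, hence uniform integrability. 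The same bound, with $M_\alpha(K)\le\liminf_\delta M_{\alpha,\delta}(K^\delta)$ by Fatou, shows $I_{m,T}(K)<\infty$ almost surely and that the short part is uniformly small as $\epsilon\to0$.

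\emph{Intermediate durations.} On $[\epsilon,T']$ we use the coupling of random-walk loops with Brownian loops from~\cite{SS}, together with their hittability estimates: a Brownian loop of macroscopic duration that comes within $o(1)$ of $K$ actually hits $K$. These show that
\[
 \sum_{n\delta^2\in[\epsilon,T']}\frac{a_\delta^n-1}{n}\Tr(p^n_{D_\delta\setminus K^\delta})\to\int_\epsilon^{T'}(e^{m^2t}-1)H_{D\setminus K}(t)\frac{\dd t}{t}
\]
in probability, and the analogous statement with $D$ in place of $D\setminus K$ also holds.

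\emph{Long loops; this is the main obstacle.} We must show $J^{>T'}_{\delta}$ is close to $J^{>T'}_{m,T}(K)$ when $\lambda_1(D\setminus K)>m^2$, and that $J_\delta\to\infty$, hence $Y_\delta\to0$, when $\lambda_1(D\setminus K)<m^2$. The critical case $\lambda_1(D\setminus K)=m^2$ must be shown to have probability zero, or else handled separately.

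The key step is spectral convergence: $\delta^{-2}(1-\rho(p_{\delta,D_\delta\setminus K^\delta}))\to\lambda_1(D\setminus K)$. The upper bound comes from testing the discrete Rayleigh quotient with discretised $H^1_0$ functions supported away from $K$. The lower bound is the delicate part, since it needs $K^\delta$ to kill discrete eigenfunctions near $K$. For this we would use a discrete Beurling/hittability estimate, so that near $K^\delta$ the walk hits the curve with high probability, and then extract a limit eigenfunction in $H^1_0(D\setminus K)$ by compactness.

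Given a uniform gap $\nu_\delta<\lambda_1-\eta$, the bound $\Tr(p^n_{D_\delta\setminus K})\le\Tr(p^{n_0}_{D_\delta})\,\rho^{\,n-n_0}$ shows that the tail beyond $T'$ is uniformly small. In the supercritical case, the even-trace lower bound of Proposition~\ref{prop:finite-det} makes $J_\delta$ diverge. That the boundary case $\lambda_1(D\setminus K)=m^2$ is $\PP$-null would need an additional argument, for instance strict monotonicity of $\lambda_1$ along the curve.

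Finally, (iii) follows from \eqref{eq:positive-support}: on the event $\{\lambda_1(D\setminus K)>m^2\}$ we have $J_{m,T}<\infty$ and $I_{m,T}<\infty$, so $Y_{m,T}>0$ on an event of positive probability.
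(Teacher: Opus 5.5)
Your architecture follows the paper: couple $\eta^\delta\to\eta$, then split the loops into three regimes. Short loops are handled by covering content (Lemma~\ref{lem:subcritical-short-loops}), fixed windows by the loop-soup coupling and near-miss estimate of \cite{SS} (Lemma~\ref{lem:finite-duration-loops}), and long loops by spectral decay. Uniform integrability comes from \eqref{eq:EB}, and $\EE Y_{m,T}>0$ from \eqref{eq:positive-support} and \eqref{eq:positivity-exact}.

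\textbf{The gap: the critical event.} You leave $\{\lambda_1(D\setminus K)=m^2\}$ unresolved and suggest proving it is $\PP$-null via strict monotonicity of $\lambda_1$ along the curve. That monotonicity says nothing about whether the law of the final value $\lambda_1(D\setminus K)$ has an atom at the fixed number $m^2$. In any case, nullity is not needed.

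At equality the continuum weight already vanishes. Since $H_U(t)\ge e^{-t\lambda_1(U)}$, one has $(e^{m^2t}-1)H_U(t)/t\ge(1-e^{-m^2t})/t$. Hence $J_{m,T}=\infty$ and $Y_{m,T}=0$ there.

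On the whole event $E_0=\{\lambda_1(D\setminus K)\le m^2\}$ one shows $Y_{\delta,m,T}\to0$ in probability without any spectral convergence:
\begin{itemize}
\item $J_{\delta,m,T}\ge J_{\delta,m,T,R}$;
\item for each fixed $R$ the truncation converges to $J_{m,T,R}$ by window convergence;
\item $J_{m,T,R}\uparrow\infty$ on $E_0$ as $R\to\infty$;
\item $|\gamma^\delta|\log a_\delta+I_{\delta,m,T}$ is tight.
\end{itemize}
This is \eqref{eq:lt-zero-weight-bound}. It also makes the upper half of your spectral convergence, and the even-trace divergence argument, unnecessary.

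\textbf{The subcritical side: a heavier route, only sketched.} You propose full spectral convergence $\delta^{-2}(1-\rho_\delta)\to\lambda_1(D\setminus K)$, with the lower bound via hittability and compactness of discrete eigenfunctions. The hard half is not carried out. In dimension three, hittability of $K^\delta$ is probabilistic. You would need it near all points of the curve simultaneously, uniformly in $\delta$, to force the limiting eigenfunction to vanish quasi-everywhere on $K$.

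The paper gets exactly the one-sided bound it needs from the window convergence already proved. Fix $E_\kappa=\{\lambda_1(U)\ge m^2+4\kappa\}$ and suppose $\rho_\delta\ge e^{-(m^2+2\kappa)\delta^2}$.
\begin{itemize}
\item For even $n$, $\Tr(p_\delta^n)\ge\rho_\delta^n$, so the discrete loop mass in the window $[S,S+1]$ is at least $\frac{c}{S+1}e^{-(m^2+2\kappa)(S+1)}$.
\item On $E_\kappa$ the continuum mass there is at most $\frac{C_D}{S}e^{-(m^2+4\kappa)(S-1)}$.
\item For a fixed large $S$ these differ by a fixed amount, so \eqref{eq:window} gives \eqref{eq:gap-prob}.
\item The tail bound $\Tr(p_\delta^n)\le\rho_\delta^{n-n_0}\Tr(p_\delta^{n_0})$ then finishes as in your argument.
\end{itemize}

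\textbf{A minor point on uniform integrability.} You need an $L^p$ bound on $a_\delta^{|\gamma^\delta|}$, not only $|\gamma^\delta|\log a_\delta\to0$ in probability. Self-avoidance gives $|\gamma^\delta|\le C\delta^{-\alpha}M_{\alpha,\delta}$, hence $|\gamma^\delta|\log a_\delta\le Cm^2\delta^{2-\alpha}M_{\alpha,\delta}$, which is controlled by \eqref{eq:EB}.
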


\section{Convergence of the boosted laws}\label{sec:transfer}
\label{sec:subcritical-proof}
 In this section we prove Theorem~\ref{thm:extension-intro} and Theorem~\ref{thm:all-boosts} by
showing that the compensated weights of
Lemma~\ref{lem:discrete-comp} converge to~\eqref{eq:Y}, using the inputs
of Section~\ref{sec:inputs}. In particular, loops with duration in a fixed window
are handled by a loop-soup coupling, short loops by the covering
estimate, and long loops by spectral decay.

\subsection{Loops in a fixed duration interval}\label{sec:fixed-duration}
On a fixed duration interval, we prove convergence of the loop masses hitting and avoiding the curve by combining loop-soup coupling with intersection stability.

\begin{lemma}\label{lem:finite-duration-loops}
In either radial setting, couple the ordinary naturally parametrized
curves so that $K^\delta\to K$ almost surely. For $0<u<v<\infty$ and bounded continuous
$f:[u,v]\to\RR$,
\begin{equation}\label{eq:window}
 \sum_{u\leq n\delta^2\leq v}\frac{f(n\delta^2)}n
 \Tr(p_{\delta,D_\delta\setminus K^\delta}^{n})
 \longrightarrow
 \int_u^v f(t)H_{D\setminus K}(t)\frac{\dd t}{t}
 \quad\text{in }L^1.
\end{equation}
The same holds for the loops hitting the curve, by subtraction from
the corresponding sums for $D_\delta$ and $D$.
\end{lemma}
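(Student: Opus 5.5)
The plan is to rewrite the discrete sum as an integral against random-walk loop measure, couple it with Brownian loop measure, and then pass to the limit loop by loop. The left side of \eqref{eq:window} equals
\[
 \int \ind\{u\le t_\ell\le v\}\,f(t_\ell)\,\ind\{\ell\cap K^\delta=\varnothing\}\,\mu_{\delta,D_\delta}(\dd\ell),
\]
by the restriction property. Matrix restriction removes vertices, so a loop in $D_\delta\setminus K^\delta$ is a loop in $D_\delta$ avoiding the vertex trace. The interpolation differs by $O(\delta)$, which is harmless once the intersection-stability step below is in place.

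For loops with duration in the compact window $[u,v]$, I would use the coupling of random-walk and Brownian loop measures from \cite{SS}, in the form of a Lawler--Trujillo Ferreras type strong coupling. This coupling matches loops of duration $\ge u$ in $D$ with Brownian loops, at sup-distance $o(1)$ and with durations differing by $o(1)$, except on a set of vanishing mass. The total mass of such loops is bounded by $\int_u^\infty H_D(t)\,\dd t/t<\infty$, and the same bound holds uniformly in $\delta$ for the discrete traces. Loops near $\partial D$ need separate care. Boundary-touching discrepancies have small mass, because the Brownian loop measure of loops in $D$ with duration at least $u$ coming within $\varepsilon$ of $\partial D$ tends to zero with $\varepsilon$. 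Continuity of $f$ then handles the weight $f(t_\ell)$, and the endpoint effects at $u$ and $v$ are negligible since $\mu_D\{t_\ell=u\}=0$.

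The key step, and the main obstacle, is intersection stability. For a coupled pair $(\ell^\delta,\ell)$ we need
\[
 \ind\{\ell^\delta\cap K^\delta=\varnothing\}\to\ind\{\ell\cap K=\varnothing\}
\]
in $\mu_D\otimes\PP$-measure, while $K^\delta\to K$ in Hausdorff distance.
\begin{itemize}
\item If $\ell\cap K=\varnothing$, then $\dist(\ell,K)>0$ by compactness, so $\ell^\delta$ avoids $K^\delta$ for small $\delta$.
\item If $\ell\cap K\ne\varnothing$, closeness alone does not force $\ell^\delta$ to hit $K^\delta$. This is where hittability enters: I would invoke the hittability estimate of \cite{SS}. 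It says that, uniformly in $\delta$ and with high probability over $K^\delta$, a random-walk path that comes within distance $\varepsilon$ of $K^\delta$ fails to hit it with probability at most $o_\varepsilon(1)$. In dimension three this is the LERW hittability estimate; in dimension two it follows from Beurling-type estimates, since $K^\delta$ is connected with macroscopic diameter.
\end{itemize}
I would apply this to the loop decomposed at its closest approach, using the Markov property of the walk bridge, or rather the rooted-loop representation \eqref{eq:bridge-loop-definition} and its discrete analogue. The event of touching $K$ without nearly crossing it has vanishing Brownian loop measure: Brownian loops hitting a curve of dimension $>1$ a.s.\ enter it in their interior.

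Finally, boundedness of the integrand by $\|f\|_\infty$ and the uniform mass bound give domination, so convergence in measure upgrades to $L^1$ convergence of the integrals. The restriction identity \eqref{eq:loop-hit-avoid-traces} converts the limit to $\int_u^v fH_{D\setminus K}\,\dd t/t$. The hitting version follows by subtracting from the deterministic convergence
\[
 \sum_{u\le n\delta^2\le v}\frac{f(n\delta^2)}{n}\Tr\bigl(p^n_{\delta,D_\delta}\bigr)\to\int_u^v fH_D\,\frac{\dd t}{t},
\]
which is the local CLT for the killed walk.
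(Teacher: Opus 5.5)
Your proposal follows essentially the same route as the paper: the coupling of \cite{SS} on the fixed duration window, stability of containment in $D$ near the sphere, the near-miss (hittability) estimate of \cite{SS} in dimension three and the Beurling estimate in dimension two to preserve intersection with the curve, bounded total intensity for the $L^1$ upgrade, and subtraction for the hitting version. The paper phrases this with coupled loop soups, using Poisson second moments and then conditional expectation given $\sigma(K^\delta,K)$ to recover the intensities, and with a diameter cutoff $r$ that it later removes because loops of duration at least $u$ and diameter at most $r$ have vanishing mass. Your measure-level version needs that same cutoff, and it should decompose at the first approach time (a stopping time) rather than the closest approach, so that the walk still travels macroscopically afterwards; the remark about Brownian loops entering $K$ is unnecessary, since the discrete near-miss bound alone suffices.
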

\begin{proof}
Couple intensity-one loop soups $\mathcal L_\delta,\mathcal L$
independently of the curves using \cite[Theorem~2.2]{SS}, with duration
normalized by \eqref{eq:generator}; the planar approximation originates
in~\cite{RWLS}. On the fixed window, paired loops converge in duration
and parametrized trace, with probability tending to one. Their total
intensities are bounded by $C_D\int_{u/2}^{2v}t^{-d/2-1}\dd t$.
We first keep only loops of diameter greater than $r>0$.

Pairing alone does not preserve intersection with the curve. Let
$\mathcal N_\delta(h,r)$ be the event that some loop in
$\mathcal L_\delta$, contained in $D_\delta$ and of diameter greater
than $r$, avoids $K^\delta$ but comes within distance $h$ of it.
In the three-dimensional radial unit ball,
\cite[Proposition~5.2]{SS} gives, for some $\zeta>0$,
\begin{equation}\label{eq:lt-near-miss-bound}
 \limsup_{\delta\downarrow0}\PP(\mathcal N_\delta(h,r))
 \leq C(h^{1/2}+h^\zeta r^{-5}),\qquad 0<h<r.
\end{equation}

Its hypotheses are radial hittability of the curve and
independence of the loop soup.
In dimension two, the proof of \cite[Proposition~5.2]{SS} applies with
the planar Beurling estimate as its hittability input: the interpolated
set $K^\delta\cup D^c$ is connected and unbounded, so that input is
deterministic and uniform, including at the two endpoints. It gives
$\lim_{h\downarrow0}\limsup_{\delta\downarrow0}
\PP(\mathcal N_\delta(h,r))=0$ for each fixed $r>0$.

A limiting loop avoiding $K$ has positive distance from it. A loop
hitting $K$ whose partner avoids $K^\delta$ produces a near miss,
excluded by \eqref{eq:lt-near-miss-bound}, first as $\delta\downarrow0$
then $h\downarrow0$. Containment in $D$ is stable by sphere regularity
and the boundary-layer estimate \cite[Proposition~5.1]{SS}.
Let $X_{\delta,r}$ be the sum of $f(t_\delta(\ell))$ over loops in
$\mathcal L_\delta$ with duration in $[u,v]$, diameter greater than $r$,
contained in $D_\delta$ and avoiding $K^\delta$; define $X_r$ analogously.
The pairing
then gives $X_{\delta,r}\to X_r$ in probability, and the Poisson
second-moment formula gives
$\sup_\delta\EE|X_{\delta,r}|^2+\EE|X_r|^2
 \leq C_{D,u,v}\|f\|_\infty^2$.
Thus convergence is in $L^1$. Conditional expectation given
$\mathcal G=\sigma(K^\delta,K)$ identifies the desired intensities and
\begin{equation}\label{eq:lt-poisson-contraction}
 \EE\left|\EE[X_{\delta,r}-X_r\mid\mathcal G]\right|
 \leq\EE|X_{\delta,r}-X_r|\longrightarrow0.
\end{equation}

Loops of duration at least $u$ and diameter at most $r$ stay in a box
of side $Cr$ around their root. Splitting its killed heat kernel at
half-time bounds their intensity by
$C_D\int_u^v t^{-d/2-1}e^{-ct/r^2}\dd t\to0$; the analogous lattice
sum is uniform as $\delta\downarrow0$. This removes the diameter cutoff.
For boosted multipliers use the uniform convergence on $[u,v]$ of
\begin{equation}\label{eq:lt-window-multiplier}
 a_\delta^n-1=e^{\nu_\delta n\delta^2}-1,
 \qquad \nu_\delta=\delta^{-2}\log(1+m^2\delta^2)\longrightarrow m^2.
\end{equation}
The factor two in the even-step local central limit theorem cancels
the duration spacing $2\delta^2$, so \eqref{eq:window} also incorporates
lattice periodicity.
\end{proof}

\subsection{Short loops}\label{sec:short-loops}

For loops of duration $t$, we cover the curve by boxes of side
comparable to $\sqrt t$. Each box carries bounded return mass at that
scale, while the boost contributes a factor of order $t$; summability
therefore requires a covering exponent below two. Short loops are
treated similarly in~\cite[Proposition~6.13]{BS26}.

\par\nopagebreak[4]
\begin{lemma}
\label{lem:subcritical-short-loops}
For $0<\alpha<2$, the covering contents of Section~\ref{sec:inputs}
satisfy
\begin{align}
 H_D(t)-H_{D\setminus K}(t)
 &\le C M_\alpha(K)t^{-\alpha/2},&&0<t\le1,
 \label{eq:det-short}\\
 I_{m,T}(K)&\le C_{d,\alpha}m^2e^{m^2T}
 M_\alpha(K)T^{1-\alpha/2},&&0<T\le1,
 \label{eq:small-coefficient}\\
 I_{\delta,m,T}(K^\delta)&\le C_{d,\alpha}m^2e^{m^2T}
 M_{\alpha,\delta}(K^\delta)T^{1-\alpha/2},&&0<T\le1.
 \label{eq:discrete-small-coefficient}
\end{align}
\end{lemma}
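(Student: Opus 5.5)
The plan is to prove \eqref{eq:det-short} first and then obtain \eqref{eq:small-coefficient} and \eqref{eq:discrete-small-coefficient} by integrating it against the boost factor.

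\emph{The heat-trace bound.} The quantity $H_D(t)-H_{D\setminus K}(t)=\int_D[p_D(t,x,x)-p_{D\setminus K}(t,x,x)]\dd x$ is, by \eqref{eq:loop-hit-avoid-traces}, $t$ times the density of loops of duration $t$ hitting $K$. Pointwise,
\[
 p_D(t,x,x)-p_{D\setminus K}(t,x,x)=p_D(t,x,x)\,\PP^{t}_{x,x}\{\text{bridge hits }K\}\le Ct^{-d/2}\PP^{t}_{x,x}\{\text{bridge hits }K\}.
\]
Choose the dyadic $r\in\{1,2^{-1},\ldots\}$ with $r\asymp\sqrt t$, for instance $r\le\sqrt t<2r$; this is possible since $t\le1$. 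Cover $K$ by the $N_r(K)$ boxes of $\mathcal Q_r$ that meet it, and take a union bound over these boxes. For one box $Q$ of side $r$, the integral $\int_D t^{-d/2}\PP_{x,x}^t\{\text{hit }Q\}\dd x$ is bounded by a constant. Indeed, bridges rooted at distance $s\sqrt t$ from $Q$ hit $Q$ with probability at most $Ce^{-cs^2}$, by the Gaussian bridge tail of the free bridge; the killed bridge is handled by comparing the killed density with the free one. Integrating $t^{-d/2}e^{-c\,\dist(x,Q)^2/t}$ over $x$ gives $O(1)$ because $r\asymp\sqrt t$. Hence
\[
 H_D(t)-H_{D\setminus K}(t)\le CN_r(K)\le CM_\alpha(K)r^{-\alpha}\le C'M_\alpha(K)t^{-\alpha/2}.
\]

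\emph{Integration.} Next insert this bound into the heat-trace form of $I_{m,T}$. For $t\le T\le1$ we have $e^{m^2t}-1\le m^2te^{m^2T}$, so
\[
 I_{m,T}(K)\le C m^2e^{m^2T}M_\alpha(K)\int_0^T t^{-\alpha/2}\dd t,
\]
which is finite precisely because $\alpha<2$ and gives the factor $T^{1-\alpha/2}/(1-\alpha/2)$. This proves \eqref{eq:small-coefficient}.

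\emph{Discrete version.} The discrete bound is the same argument with sums in place of integrals. We have $\Tr(p^n_{\delta,D_\delta})-\Tr(p^n_{\delta,D_\delta\setminus K})=\sum_x\PP^x(S_n=x,\ n<T,\ S[0,n]\cap K\neq\varnothing)$. Set $t=n\delta^2$ and choose dyadic $r\asymp\sqrt t$. When $\sqrt t\ge\delta$, the restriction $r\ge\delta$ in $M_{\alpha,\delta}$ is satisfied, possibly after adjusting by a constant factor. The lattice Gaussian bounds (the local CLT upper bound $\PP^x(S_n=y)\le Cn^{-d/2}e^{-c|x-y|^2/n}$, together with the bridge tail via the reflection principle or the maximal inequality) give
\[
 \sum_x\PP^x(\cdots)\le CN_r(K)\delta^{-d}\cdot\delta^d\ldots,
\]
that is, after the $\delta^{-d}$ vertex count cancels the $n^{-d/2}=\delta^dt^{-d/2}$ normalisation, the same estimate $CM_{\alpha,\delta}t^{-\alpha/2}$. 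Then use $a_\delta^n-1\le\nu_\delta ne^{\nu_\delta n\delta^2}\delta^2$ with $\nu_\delta\le m^2$, and $\sum_{n\delta^2\le T}\frac{\delta^2}{n\delta^2}\,n\delta^2\,t^{-\alpha/2}\approx\int_0^Tt^{-\alpha/2}\dd t$ as a Riemann sum, which is uniform since $t^{-\alpha/2}$ is decreasing. For the very small $n$, with $n\delta^2<\delta^2\cdot$const, there are only boundedly many terms; each is at most $C|K|\le CN_\delta(K)\le CM_{\alpha,\delta}\delta^{-\alpha}$, with weight $m^2\delta^2$, which again fits the bound.

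\emph{Main obstacle.} The step I expect to be hardest is the uniform single-box estimate for the bridge, on both the lattice and the continuum sides: the hitting probability of a box of side $\asymp\sqrt t$ by a duration-$t$ bridge rooted far away must decay in a Gaussian way, uniformly over boxes near $\partial D$. I would handle it by dominating killed objects by free ones, and bounding the bridge probability by the maximal-displacement estimate applied to the two halves of the bridge, each of which is comparable to a free walk after splitting at half-time.
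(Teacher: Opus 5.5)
Your proposal is correct and is essentially the paper's proof. You cover $K$ by the $N_r(K)$ boxes with $r\asymp\sqrt t$ and show that each box carries $O(1)$ return mass at duration $t$ after discarding killing; the paper does this by reversal, the Markov property at $\tau_Q$, the heat-kernel bound $Cn^{-d/2}$ and maximal displacement, which is your half-time splitting. You then integrate against $(e^{m^2t}-1)\,\dd t/t\le m^2e^{m^2T}\dd t$, use the decreasing Riemann sum on the lattice, and treat boundedly many steps at the mesh scale, as the paper does.

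The one step to tidy is where you drop killing. Do it at the level of unnormalized bridge mass, $p_D(t,x,x)\,\PP^{D,t}_{x,x}(\text{hit }Q)\le p(t,x,x)\,\PP^{t}_{x,x}(\text{hit }Q)$ with $p$ the free kernel. Do not first bound $p_D(t,x,x)\le Ct^{-d/2}$ and then compare the conditioned bridge's hitting probability with the free one, since that comparison does not hold pointwise.
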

\begin{proof}
For an $r$-box $Q$ and $r^2\le n\delta^2<4r^2$, reversal, the
Markov property at $\tau_Q$, and the Gaussian heat-kernel bound give
\[
 \sum_x\PP^x(S_n=x,\ S[0,n]\cap Q\ne\varnothing)
 \le Cn^{-d/2}\sum_x\PP^x(\tau_Q\le n/2)\le C.
\]
The last sum is $O(n^{d/2})$ by the maximal-displacement bound.
Discarding killing and summing over a cover yields
\begin{equation}\label{eq:lt-hit-trace-cover}
 0\le\Tr(p_{\delta,D_\delta}^n)
       -\Tr(p_{\delta,D_\delta\setminus K^\delta}^n)
 \le C N_r(K^\delta),\qquad r^2\le n\delta^2<4r^2.
\end{equation}
The identical bridge argument in physical time gives
$H_D(t)-H_{D\setminus K}(t)\le C N_r(K)$ for $r\asymp\sqrt t$,
hence \eqref{eq:det-short}. Integrate against
$(e^{m^2t}-1)\dd t/t\le m^2e^{m^2T}\dd t$ to obtain
\eqref{eq:small-coefficient}. For \eqref{eq:discrete-small-coefficient}
use $(a_\delta^n-1)/n\le m^2\delta^2e^{m^2n\delta^2}$ and
\[
 \delta^2\sum_{n\le T/\delta^2}(n\delta^2)^{-\alpha/2}
 \le C_\alpha T^{1-\alpha/2}.
\]
Loops with a bounded number of steps are covered at the mesh scale $\delta$;
there are no nonconstant loops below $2\delta^2$.
\end{proof}
Since the coefficient of the covering content tends to zero with
$T$, a single exponential moment of the content controls any fixed
power of the weight.

\subsection{Long loops and the spectral cutoff}\label{sec:long-loops}

We first show that changing the cutoff multiplies the weight by a
deterministic factor and that positivity of the weight is decided by
the slit-domain eigenvalue; we then control long loops in the limit.

\par\nopagebreak[4]
\begin{proposition}
\label{prop:spectral-support}
For compact $K$ with $M_\alpha(K)<\infty$ for some $\alpha<2$,
$I_{m,T}(K)<\infty$ for every $m\ge0$ and finite $T>0$. For $0<T_1<T_2$,
\eqref{eq:cutoff-change} holds.
For $m>0$,
\begin{equation}\label{eq:positivity-exact}
 Y_{m,T}(K)>0\quad\Longleftrightarrow\quad m^2<\lambda_1(D\setminus K).
\end{equation}
\end{proposition}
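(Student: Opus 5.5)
The plan is to work entirely through the heat-trace expressions for $I_{m,T}$ and $J_{m,T}$ from Section~\ref{sec:compensated-background}, treating the three assertions separately.

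\emph{Finiteness of $I_{m,T}(K)$.} Split the integral at $\min(T,1)$. On $(0,\min(T,1)]$, the bound \eqref{eq:det-short} of Lemma~\ref{lem:subcritical-short-loops} together with $(e^{m^2t}-1)/t\le m^2e^{m^2T}$ gives an integrand of order $M_\alpha(K)\,t^{-\alpha/2}$, which is integrable at zero since $\alpha<2$. If $T>1$, the remaining piece over $[1,T]$ is bounded by
\[
\int_1^T(e^{m^2t}-1)H_D(t)\,\frac{\dd t}{t},
\]
because $0\le H_D-H_{D\setminus K}\le H_D$ by domain monotonicity of the killed heat kernel. Since $H_D(t)\le H_D(1)<\infty$ for $t\ge1$, this piece is finite.

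\emph{The cutoff identity \eqref{eq:cutoff-change}.} For $T_1<T_2$, subtract the two exponents:
\[
I_{m,T_2}-I_{m,T_1}-(J_{m,T_2}-J_{m,T_1})
=\int_{T_1}^{T_2}(e^{m^2t}-1)\bigl[(H_D-H_{D\setminus K})+H_{D\setminus K}\bigr]\frac{\dd t}{t}.
\]
The bracket collapses to $H_D(t)$, which is the claimed curve-independent factor. The only care needed is the case $J_{m,T_1}=\infty$. Since $J_{m,T_1}-J_{m,T_2}$ is the integral over $[T_1,T_2]$ with integrand at most $(e^{m^2t}-1)H_D(t)/t$, it is finite. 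So $J_{m,T_1}$ and $J_{m,T_2}$ are infinite simultaneously, and both weights vanish together; the identity then holds as $0=0$.

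\emph{The positivity criterion \eqref{eq:positivity-exact}.} By the first part, $Y_{m,T}(K)>0$ if and only if $J_{m,T}(K)<\infty$. I will use the spectral expansion \eqref{eq:heat-trace-definition} for $U=D\setminus K$ (with $U$ open; if $U$ is empty, $J=0$ and $\lambda_1=\infty$, consistent with the claim).

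For the forward direction, suppose $m^2<\lambda_1(U)$. I expect the main obstacle to be a uniform tail bound for $H_U$. The plan is to use the semigroup property: for $t\ge T$,
\[
H_U(t)=\sum_j e^{-t\lambda_j}\le e^{-(t-T)\lambda_1(U)}\sum_j e^{-T\lambda_j}=e^{-(t-T)\lambda_1(U)}H_U(T),
\]
and then dominate $H_U(T)\le H_D(T)<\infty$. The integrand is therefore $O(e^{-(\lambda_1-m^2)t})$, so $J<\infty$.

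For the converse, suppose $m^2\ge\lambda_1(U)$. Then $H_U(t)\ge e^{-t\lambda_1(U)}$, so the integrand is at least $(e^{m^2t}-1)e^{-\lambda_1 t}/t\gtrsim 1/t$ for large $t$, using $m>0$. Hence $J=\infty$ and $Y=0$.

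A genuine point to check is that discreteness of the spectrum and the expansion \eqref{eq:heat-trace-definition} hold on slit domains. This is guaranteed by the compact embedding $H^1_0(U)\subset H^1_0(D)\hookrightarrow L^2$ recalled in Section~\ref{sec:operators}, together with trace-class domination by the ball's semigroup.
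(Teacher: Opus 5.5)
Your proposal is correct and follows essentially the same route as the paper. You get finiteness of $I_{m,T}$ from the short-loop bound \eqref{eq:det-short} plus finite loop mass on a bounded duration window, and the cutoff identity by collapsing hitting and avoiding traces to $H_D$, noting that $J_{m,T_1}$ and $J_{m,T_2}$ diverge together. The positivity criterion comes from the two-sided spectral bounds $e^{-t\lambda_1(U)}\le H_U(t)\le H_U(T)e^{-(t-T)\lambda_1(U)}$; the only cosmetic difference is that you anchor the semigroup bound at $T$ rather than at $t=1$.
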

\begin{proof}
Short-loop finiteness follows from \eqref{eq:small-coefficient}; any
remaining finite duration interval has finite loop mass in $D$. Moving the
cutoff from $T_1$ to $T_2$ adds the hitting mass to $I$ and removes the
avoiding mass from $J$. Their sum is the finite integral over loops in $D$ in
\eqref{eq:cutoff-change}. If $J$ is infinite, both weights are zero, so the
same identity holds without subtracting infinities.
For $U=D\setminus K\ne\varnothing$, the spectral expansion gives
\[
 e^{-t\lambda_1(U)}\le H_U(t)
 \le H_U(1)e^{-(t-1)\lambda_1(U)},\qquad t\ge1.
\]
Thus $J_{m,T}$ is finite exactly when $m^2<\lambda_1(U)$; at equality
the ground state contributes the divergent integral
$\int^\infty(1-e^{-m^2t})\dd t/t$, and above equality the divergence
is exponential. The empty-domain convention completes the proof.
\end{proof}

The next lemma controls the long-loop tail and proves convergence
of the weights, including at the spectral threshold.

\par\nopagebreak[4]
\begin{lemma}\label{lem:IR}
Suppose \eqref{eq:window} holds and
$Q_\delta:=|\gamma^\delta|\log a_\delta+I_{\delta,m,T}(K^\delta)$
converges in probability to the finite variable $I_{m,T}(K)$. Then
$Y_{\delta,m,T}\to Y_{m,T}$ in probability, including on
$\{\lambda_1(D\setminus K)=m^2\}$.
\end{lemma}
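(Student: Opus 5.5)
Since $\log Y_{\delta,m,T}=Q_\delta-J_{\delta,m,T}(K^\delta)$ and $Q_\delta\to I_{m,T}(K)$ in probability by hypothesis, it suffices to show that $e^{-J_{\delta,m,T}(K^\delta)}\to e^{-J_{m,T}(K)}$ in probability, with the convention $e^{-\infty}=0$. We work in the coupling of Lemma~\ref{lem:finite-duration-loops}, where $K^\delta\to K$ almost surely. For $V>T$ we split
\[
 J_{\delta,m,T}=J^{[T,V]}_\delta+J^{>V}_\delta,
\]
and similarly in the continuum. By \eqref{eq:window} with $f(t)=e^{m^2t}-1$, the window term $J^{[T,V]}_\delta$ converges in $L^1$ to $\int_T^V(e^{m^2t}-1)H_{D\setminus K}(t)\dd t/t$ for each fixed $V$. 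Because $J\ge0$ and the map $x\mapsto e^{-x}$ is monotone, two one-sided statements about the tail suffice.

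\emph{Lower bound on $J$.} This gives the upper bound on $Y$. Since $J^{>V}_\delta\ge0$, we have $\liminf_\delta J_{\delta,m,T}\ge\int_T^V$ in probability for each $V$. Letting $V\to\infty$ shows $\liminf J_\delta\ge J_{m,T}(K)$ in probability. On the event $\{\lambda_1(D\setminus K)\le m^2\}$, Proposition~\ref{prop:spectral-support} gives $J_{m,T}=\infty$, so $e^{-J_\delta}\to0$ there. This covers the threshold case, where the continuum divergence is only logarithmic in $V$, and no discrete spectral information is needed.

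\emph{Upper bound on the tail.} On $\{\lambda_1(D\setminus K)>m^2\}$ we must show $\lim_V\limsup_\delta\PP(J^{>V}_\delta>\varepsilon,\ \lambda_1(D\setminus K)>m^2+\eta)=0$. We would use the discrete spectral bound
\[
 \Tr(p^n_{\delta,U})\le \Tr(p^{n_0}_{\delta,U})\,\rho(p_{\delta,U})^{n-n_0},
\]
which holds because $p_{\delta,U}$ is symmetric. Writing $\rho=1-\delta^2\lambda_1^\delta(U)$ with $\lambda_1^\delta$ the bottom of $A_{\delta,U}$, the tail becomes $\sum_{n\delta^2>V}\frac{a^n_\delta}{n}\Tr(p^{n_0})e^{-(n-n_0)\delta^2\lambda_1^\delta}$. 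This is summable and small for large $V$ once $\liminf_\delta\lambda_1^\delta(D_\delta\setminus K^\delta)\ge\lambda_1(D\setminus K)-\eta/2>m^2$. The prefactor $\Tr(p^{n_0}_{\delta,U})$ with $n_0\delta^2=1$ is bounded uniformly by $H_D(1)$-type lattice heat-trace bounds.

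\emph{Main obstacle: lower semicontinuity of the discrete eigenvalue.} It remains to get the lower semicontinuity $\liminf_\delta\lambda_1^\delta(D_\delta\setminus K^\delta)\ge\lambda_1(D\setminus K)$ in probability. I would derive it from \eqref{eq:window} rather than from eigenfunction convergence. Take $s$ large and fixed. The convergence $\Tr(p^n_{\delta,U_\delta})\to H_{D\setminus K}(s)$ at $n\delta^2\approx s$ (from the window with $f$ concentrated near $s$, averaged over a small interval) combines with $\Tr(p^n)\ge\rho^n$ (even $n$) to give
\[
 e^{-s\lambda_1^\delta}\lesssim H_{D\setminus K}(s)+o(1)\le H_{D\setminus K}(1)e^{-(s-1)\lambda_1(D\setminus K)}+o(1).
\]
Taking logarithms and dividing by $s$ gives $\liminf\lambda_1^\delta\ge\lambda_1(D\setminus K)-C/s$, with an error that $s\to\infty$ removes. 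This is the step I expect to need care: the window convergence holds only in $L^1$ averaged over durations, and one must pass from averaged traces to a pointwise $n$ via monotonicity of $n\mapsto\Tr(p^{2n})$. Finally, the event $\{\lambda_1=m^2\}$ is already covered by the lower bound step, and letting $\eta\downarrow0$ completes the convergence in probability.
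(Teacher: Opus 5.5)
Your proposal is correct and follows essentially the same route as the paper: window convergence together with $\Tr(p_\delta^n)\ge\rho_\delta^n$ for even $n$ rules out a small discrete gap on $\{\lambda_1(D\setminus K)\ge m^2+4\kappa\}$, the bound $\Tr(p_\delta^n)\le\rho_\delta^{n-n_0}\Tr(p_\delta^{n_0})$ controls the long-loop tail there, and on $\{\lambda_1(D\setminus K)\le m^2\}$ the divergent truncated masses force the weight to zero. The one difference is minor: the paper compares $\rho_\delta$ with the deterministic threshold $e^{-(m^2+2\kappa)\delta^2}$ over a single fixed window $[S,S+1]$ instead of proving full lower semicontinuity of the discrete eigenvalue, which avoids the dependence of your $C/s$ error on the random value $\lambda_1(D\setminus K)$.
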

\begin{proof}
The case $m=0$ is immediate. Put $U=D\setminus K$,
$U_\delta=D_\delta\setminus K^\delta$,
$p_\delta=p_{\delta,U_\delta}$ and $\rho_\delta=\rho(p_\delta)$.
We first show that finite-window convergence forces a discrete spectral bound on $E_\kappa=\{\lambda_1(U)\ge m^2+4\kappa\}$.
More precisely, for $\kappa>0$ small enough,
\[
 \int_S^{S+1}H_U(t)\frac{\dd t}{t}
 \le\frac{C_D}{S}e^{-(m^2+4\kappa)(S-1)},\qquad S\ge2.
\]
If $\rho_\delta\ge e^{-(m^2+2\kappa)\delta^2}$, then
\[
 \sum_{S\le n\delta^2\le S+1}\frac{\Tr(p_\delta^n)}n
 \ge\frac{c}{S+1}e^{-(m^2+2\kappa)(S+1)}.
\]
Indeed, for even $n$, symmetry gives $\Tr(p_\delta^n)\ge\rho_\delta^n$;
the window contains order $\delta^{-2}$ even integers, each with
$1/n\asymp\delta^2/(S+1)$. Choose a fixed $S$ making the first bound
less than half the second. The intensities then differ by a fixed
positive amount on the exceptional event, so \eqref{eq:window} implies
\begin{equation}\label{eq:gap-prob}
 \PP(E_\kappa\cap\{\rho_\delta\ge
                  e^{-(m^2+2\kappa)\delta^2}\})\longrightarrow0.
\end{equation}
On $E_\kappa$ outside this event, choose even $n_0$ with
$n_0\delta^2\in[1,1+2\delta^2]$. The free heat-kernel bound gives
$\Tr(p_\delta^{n_0})\le C_D$, and for even $n\ge n_0$,
\[
 \Tr(p_\delta^n)\le\rho_\delta^{n-n_0}\Tr(p_\delta^{n_0})
 \le C_{D,m,\kappa}e^{-(m^2+2\kappa)n\delta^2}.
\]
Since odd traces vanish and $a_\delta^n\le e^{m^2n\delta^2}$, the
boosted tail after $R\ge2$ is at most
\[
 C\sum_{n\delta^2>R}\frac{e^{-2\kappa n\delta^2}}n
 \le C\int_{R/2}^\infty e^{-\kappa t}\frac{\dd t}{t}.
\]
The continuum tail has the analogous bound.

For $R>T$ define the duration truncations
\begin{align}
 J_{\delta,m,T,R}&=\sum_{T<n\delta^2\le R}
             \frac{a_\delta^n-1}{n}\Tr(p_\delta^n),\notag\\
 J_{m,T,R}&=\int_T^R(e^{m^2t}-1)H_U(t)\frac{\dd t}{t}.
 \label{eq:lt-long-truncations}
\end{align}
At fixed $R$ these converge in probability by
\eqref{eq:window} and \eqref{eq:lt-window-multiplier}. The tail bounds
and \eqref{eq:gap-prob}, first as $\delta\downarrow0$ then
$R\uparrow\infty$, give convergence of the full $J$ on $E_\kappa$.
Together with $Q_\delta\to I_{m,T}$ this proves convergence of the
weights on $E_\kappa$; letting $\kappa\downarrow0$ covers
$\{\lambda_1(U)>m^2\}$.

On $E_0=\{\lambda_1(U)\le m^2\}$, the continuum truncations tend to
infinity, whereas $J_{\delta,m,T}\ge J_{\delta,m,T,R}$. For
$0<\varepsilon<1$, put $h=\log(1/\varepsilon)$. Then
\begin{align}
 \PP(E_0\cap\{Y_{\delta,m,T}>\varepsilon\})
 &\le\PP(Q_\delta>M)
   +\PP(E_0\cap\{J_{m,T,R}\le M+h+1\})\notag\\
 &\quad+\PP(|J_{\delta,m,T,R}-J_{m,T,R}|>1).
 \label{eq:lt-zero-weight-bound}
\end{align}
After $\delta\downarrow0$, tightness makes the first term small by
choosing $M$ large, and monotone divergence makes the second small
by then choosing $R$ large. This includes spectral equality.
\end{proof}

\subsection{Uniform integrability and convergence of the densities}
\label{sec:ui-laws}

We now combine the three duration regimes and prove uniform
integrability, which gives convergence of the boosted laws.

\begin{proof}[Proof of Theorems~\ref{thm:extension-intro} and
\ref{thm:all-boosts}]
Use the covering and positive-support hypotheses of Theorem~\ref{thm:extension-intro};
in the radial models they follow from Theorem~\ref{thm:covering-input}
and Proposition~\ref{prop:all-support}. Couple
$\eta^\delta\to\eta$ almost surely, with traces $K^\delta,K$, and
abbreviate $M_{\alpha,\delta}(K^\delta)$ to $M_{\alpha,\delta}$.
Equation~\eqref{eq:EB} gives tightness and bounded first moments of
these contents. Self-avoidance bounds the number of vertices in each
mesh-scale box, giving
\[
 |\gamma^\delta|\log a_\delta
 \le Cm^2\delta^{2-\alpha}M_{\alpha,\delta}\longrightarrow0
 \quad\text{in probability}.
\]
At each fixed scale, Hausdorff convergence covers $K$ by fixed
enlargements of the boxes meeting $K^\delta$. Applying this to finitely
many dyadic scales and then increasing their number yields
\begin{equation}\label{eq:lt-limiting-content}
 M_\alpha(K)\le C_\alpha\liminf_{\delta\downarrow0}
                  M_{\alpha,\delta}\quad\text{almost surely}.
\end{equation}
Fatou's lemma gives $\EE e^{(c/C_\alpha)M_\alpha(K)}<\infty$.

For $0<u<T$, split the short-loop difference as
\begin{align}
 |I_{\delta,m,T}-I_{m,T}|
 \le I_{\delta,m,u}+I_{m,u}
 +|(I_{\delta,m,T}-I_{\delta,m,u})
                  -(I_{m,T}-I_{m,u})|.
 \label{eq:lt-short-limit-split}
\end{align}
The first two terms have expectation at most $C_mu^{1-\alpha/2}$ by
\eqref{eq:small-coefficient}--\eqref{eq:discrete-small-coefficient};
the fixed-window difference vanishes by
Lemma~\ref{lem:finite-duration-loops}. Taking $\delta\downarrow0$
then $u\downarrow0$ proves $I_{\delta,m,T}\to I_{m,T}$ in probability.
Lemma~\ref{lem:IR} therefore gives convergence of the weights, including
on the event $\{Y_{m,T}=0\}$.

Fix $p>1$. Choose $0<T\le1$ so that
$pC_{d,\alpha}m^2e^{m^2T}T^{1-\alpha/2}<c/2$, and then choose
$\delta_0$ so that $pCm^2\delta_0^{2-\alpha}<c/2$.
Dropping the subtracted nonnegative long-loop term gives
\[
 Y_{\delta,m,T}^p
 \le\exp\{p[Cm^2\delta^{2-\alpha}
       +C_{d,\alpha}m^2e^{m^2T}T^{1-\alpha/2}]M_{\alpha,\delta}\}
 \le e^{cM_{\alpha,\delta}}.
\]
Consequently
\begin{equation}\label{eq:lt-uniform-p-moment}
 \sup_{0<\delta<\delta_0}\EE Y_{\delta,m,T}^p<\infty,
\end{equation}
and convergence is in $L^1$.
The positive-support hypothesis and \eqref{eq:positivity-exact} give
\begin{equation}\label{eq:lt-normalizers}
 Z_{\delta,m,T}:=\EE Y_{\delta,m,T}
 \longrightarrow Z_{m,T}:=\EE Y_{m,T}\in(0,\infty).
\end{equation}
For $m=0$ this is immediate. Lemma~\ref{lem:discrete-comp} identifies
$Y_{\delta,m,T}$ as a positive deterministic multiple of
$W_{\delta,m}$, so the extended discrete laws are defined for all
small meshes.

For bounded continuous $F$ on the naturally parametrized curve space,
\begin{align*}
 |\EE[F(\eta^\delta)Y_{\delta,m,T}]-\EE[F(\eta)Y_{m,T}]|\le\|F\|_\infty\EE|Y_{\delta,m,T}-Y_{m,T}|
       +\EE[|F(\eta^\delta)-F(\eta)|Y_{m,T}]\longrightarrow0.
\end{align*}
The first term uses $L^1$ convergence; the second uses dominated
convergence under the curve coupling. Dividing by the normalizers and using
\eqref{eq:lt-normalizers} gives
\[
 \EE_{\PP_\delta^{(m)}}F
 =\frac{\EE[F(\eta^\delta)Y_{\delta,m,T}]}{Z_{\delta,m,T}}
 \longrightarrow\frac{\EE[F(\eta)Y_{m,T}]}{Z_{m,T}}
 =\EE_{\PP^{(m)}}F.
\]
The cutoff identity \eqref{eq:cutoff-change} multiplies the density and its
mean by the same deterministic factor. Its discrete counterpart and
convergence for loops in $D$ on fixed duration windows also extend the $L^1$ result to every
fixed $T>0$.
Finally, when $m^2<\lambda_1(D)$,
Lemma~\ref{lem:subcritical-exit-moments} gives
$\EE^0a_\delta^{T_\delta}<\infty$ for small $\delta$, and
Proposition~\ref{prop:agreement} identifies the extended law with the
loop erasure of the normalized boosted walk.
\end{proof}

\section{Absolute continuity and geometric consequences}
\label{sec:spectral-geometry}

We now identify the subcritical density, determine the regimes of
absolute continuity, and bound the size of unvisited regions.

\subsection{The subcritical density}

Below the spectral threshold of $D$, restoring the finite loop
factor of $D$ gives the density in Theorem~\ref{thm:subcritical}. Its
normalizer is identified by the following exit-time estimate.

\par\nopagebreak[4]
\begin{lemma}
\label{lem:subcritical-exit-moments}\label{lem:exit-moments}
Let $T_D$ be the exit time of Brownian motion with generator
$\Delta/(2d)$, and $T_\delta$ the lattice exit time. For every
$a<\lambda_1(D)$ there is $\delta_0(a)>0$ such that
\[
 \sup_{0<\delta\leq\delta_0(a)}\sup_{x\in D_\delta}
       \EE^x e^{a\delta^2T_\delta}<\infty,
 \qquad
 \EE^0e^{a\delta^2T_\delta}\longrightarrow\EE^0e^{aT_D}.
\]
Here $\delta^2T_\delta$ is the raw walk's diffusive lifetime.
\end{lemma}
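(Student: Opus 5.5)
The plan is to reduce both claims to spectral statements about the killed transition matrix $p_\delta:=p_{\delta,D_\delta}$, via the identity
\[
 \EE^x e^{a\delta^2T_\delta}=\sum_{n\ge0}e^{a\delta^2 n}\PP^x(T_\delta>n)\,(e^{a\delta^2}-1)+1
 =1+(e^{a\delta^2}-1)\sum_{n\ge0}e^{a\delta^2n}(p_\delta^n\ind)(x),
\]
obtained by summation by parts. The uniform bound therefore follows once
\[
 (e^{a\delta^2}-1)\sum_n e^{a\delta^2 n}\|p_\delta^n\ind\|_\infty
\]
is bounded uniformly in $\delta$.

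\textbf{Step 1: discrete spectral gap.} I first show that $\delta^{-2}(1-\rho(p_\delta))\to\lambda_1(D)$, i.e.\ the lowest eigenvalue of $A_{\delta,D_\delta}$ converges to $\lambda_1(D)$. For the upper bound I would insert a discretized smooth compactly supported approximation of the continuum ground state into the discrete Rayleigh quotient. For the lower bound I would interpolate discrete eigenfunctions (piecewise linear or multilinear) to obtain $H^1_0$ functions on a slightly enlarged ball $B(0,1+C\delta)$. Since $\lambda_1(B(0,r))=r^{-2}\lambda_1(D)$, this loses only a factor $1+O(\delta)$. Consequently, for fixed $a<a'<\lambda_1(D)$ and small $\delta$, we get $\rho(p_\delta)\le e^{-a'\delta^2}$.

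\textbf{Step 2: from spectral radius to sup-norm decay.} The spectral radius controls $\ell^2$ norms, but we need $\ell^\infty$ bounds. I plan to use the standard ultracontractivity trick. Take $n_0$ with $n_0\delta^2\approx1$ and write $p^n=p^{n_0}p^{n-2n_0}p^{n_0}$. The local CLT (Gaussian bound) gives $\sup_{x,y}p^{n_0}(x,y)\le C\delta^d$. Then Cauchy--Schwarz yields
\[
 (p^n\ind)(x)\le \|p^{n_0}(x,\cdot)\|_2\,\rho^{n-2n_0}\,\|p^{n_0}\ind\|_2\le C\delta^{d/2}\rho^{n-2n_0}\delta^{-d/2}=C\rho^{\,n-2n_0}.
\]
For $n<2n_0$ I simply use the trivial bound $1$. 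Summing, the total is at most
\[
 C(e^{a\delta^2}-1)\Bigl[2n_0e^{2a}+\sum_n e^{(a-a')\delta^2 n}\Bigr]\le C\delta^2\bigl[\delta^{-2}+\delta^{-2}/(a'-a)\bigr],
\]
which is bounded uniformly. This gives the first claim.

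\textbf{Step 3: convergence at the origin.} By Donsker's invariance principle and the regularity of the sphere (exterior cone condition), $\delta^2T_\delta\to T_D$ in distribution under $\PP^0$. Applying the uniform bound with some $a''\in(a,\lambda_1(D))$ gives uniform integrability of $e^{a\delta^2T_\delta}$, and hence convergence of the expectations. The exit-time convergence is standard: the walk started near $\partial D$ exits within small time with high probability, since every boundary point of the ball is regular uniformly.

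The main obstacle is Step 1, specifically the lower bound on the discrete eigenvalue uniformly in $\delta$, which is needed to prevent $\rho(p_\delta)$ from approaching $e^{-\lambda_1(D)\delta^2}$ from the wrong side. If the interpolation argument proves delicate, I would fall back on the known convergence of Dirichlet eigenvalues of lattice approximations for domains with smooth boundary, applied to a ball containing $D_\delta$ that shrinks to $D$.
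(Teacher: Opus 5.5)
Your argument is correct, but it takes a different route from the paper's.

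\textbf{The paper's route.} The paper never uses discrete spectral theory. It starts from the continuum bound $\sup_x\PP^x(T_D>T)<e^{-c'T}$ at one fixed large time $T$. It transfers this to the walk through an exit-time invariance principle that is uniform in the starting point, proved by a compactness argument using regularity of the sphere. It then iterates with the Markov property to get the survival bound \eqref{eq:ss-iterated-survival-block}. Integrating gives the moment bound, and uniform integrability plus exit-time convergence gives the limit, exactly as in your Step~3.

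\textbf{Your route.} You work on the lattice side instead. Interpolation into $H^1_0(B(0,1+C\delta))$ and the scaling $\lambda_1(B(0,r))=r^{-2}\lambda_1(D)$ give $\rho(p_{\delta,D_\delta})\le e^{-a'\delta^2}$. The heat-kernel bound on $p^{n_0}$ then converts this $\ell^2$ decay into sup-norm decay of $\PP^x(T_\delta>n)$.

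\textbf{One detail to write out.} In Step~1 you should handle the $L^2$ side of the interpolation. Write $\lambda^\delta=\delta^{-2}(1-\rho(p_{\delta,D_\delta}))$ for the discrete eigenvalue. Multilinear interpolation satisfies $\|If\|_2^2\ge(1-C\delta^2\lambda^\delta)\,\delta^d\sum_x f(x)^2$. This costs only a factor $1+O(\delta^2)$ when $\lambda^\delta<\lambda_1(D)$, and otherwise there is nothing to prove.

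\textbf{What each approach buys.} Your route gives a quantitative statement about the discrete operator itself.
\begin{itemize}
\item It yields directly $a_\delta\rho(p_{\delta,D_\delta})<1$ for $m^2<\lambda_1(D)$, which is the normalizability hypothesis of Proposition~\ref{prop:agreement}.
\item The trace decay $\Tr(p_{\delta,D_\delta}^n)\le Ce^{-cn\delta^2}$ used in the proof of Theorem~\ref{thm:subcritical} follows at once.
\item It needs the invariance principle only from the fixed point $0$.
\end{itemize}
The paper's route is softer: it needs no convergence of discrete eigenvalues and works verbatim for any domain with uniformly regular boundary. Your eigenvalue lower bound, by contrast, uses the dilation structure of the ball, or some continuity of $\lambda_1$ under outer approximation.
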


\begin{proof}
Fix $\max\{a,0\}<c<c'<\lambda_1(D)$. The Brownian semigroup gives
$\sup_x\PP^x(T_D>T)<e^{-c'T}$ for a sufficiently large fixed $T$.
The exit-time invariance principle transfers this to
$\sup_x\PP^x(T_\delta>\lfloor T/\delta^2\rfloor)\le e^{-cT}$
for small $\delta$, uniformly in the starting point.
Otherwise, there would be meshes \(\delta_n\downarrow0\) and starting points \(x_n\in D_{\delta_n}\) violating the bound. Passing to a subsequence, \(x_n\) converges in \(\overline D\);
interior convergence or boundary regularity of the sphere gives a
contradiction. Iterating by the Markov property yields
\begin{equation}\label{eq:ss-iterated-survival-block}
 \sup_{x\in D_\delta}\PP^x(\delta^2T_\delta>t)
 \le C_c e^{-ct},\qquad t\ge0.
\end{equation}
Integration gives the moment bound. Using an exponent between
$\max\{a,0\}$ and $c$ gives uniform integrability, so convergence
of the exit times implies convergence of their exponential moments.
\end{proof}

\begin{proof}[Proof of Theorem~\ref{thm:subcritical}]
Fix $m^2<\lambda_1(D)$. Short-loop control and the spectral gap in $D$
give $\Phi_m(K)<\infty$ almost surely and
\[
 \log Y_{m,T}(K)=\Phi_m(K)-
 \int_T^\infty(e^{m^2t}-1)H_D(t)\frac{\dd t}{t}.
\]
Lemma~\ref{lem:exit-moments} makes the boosted walk normalizable for
small $\delta$, so Proposition~\ref{prop:agreement} and the compensated
lattice identity give
\[
 w_{\delta,m}=\EE^0[a_\delta^{T_\delta}\mid\gamma^\delta]
 =Y_{\delta,m,T}\exp\left\{
 \sum_{n\delta^2>T}\frac{a_\delta^n-1}{n}
 \Tr(p_{\delta,D_\delta}^n)\right\}.
\]
The deterministic sum converges to the Brownian integral. Finite
windows follow from Lemma~\ref{lem:finite-duration-loops}; for the tail,
choose $m^2<c<\lambda_1(D)$ and use
$\Tr(p_{\delta,D_\delta}^n)\leq Ce^{-cn\delta^2}$ for $n\delta^2\geq2$.
To obtain this bound, split the return kernel at a time of order
$\delta^{-2}$ and apply the free heat-kernel bound to one factor and
the survival estimate
\eqref{eq:ss-iterated-survival-block}, uniform in the starting vertex,
to the other. The Brownian tail
follows from the killed semigroup. Hence Section~\ref{sec:transfer}
gives $w_{\delta,m}\to e^{\Phi_m(K)}$ in probability under its curve coupling.

For $p>1$ with $pm^2<\lambda_1(D)$, conditional Jensen gives
\[
 \EE_{\PP_\delta}w_{\delta,m}^p
 \leq\EE^0a_\delta^{pT_\delta}
 \leq\EE^0e^{pm^2\delta^2T_\delta}\leq C_{p,m}.
\]
Since $\nu_\delta=\delta^{-2}\log a_\delta\to m^2$, exit-time
convergence and this uniform integrability also give
\[
 \EE_{\PP}e^{\Phi_m(K)}
 =\lim_{\delta\downarrow0}\EE^0a_\delta^{T_\delta}
 =\EE^0e^{m^2T_D}.
\]
This proves \eqref{eq:subcritical-density}. Fatou gives the $L^p$
assertion, and strict positivity gives mutual absolute continuity.
\end{proof}

\subsection{Spectral support and absolute continuity regimes}

We identify the essential infimum of the slit-domain eigenvalue and show that the eigenvalue lies strictly above it almost surely in the
following proposition, proved in Appendix~\ref{app:spectral}.

\par\nopagebreak[4]
\begin{proposition}
\label{prop:thresholds}
Let $X=\lambda_1(D\setminus K)$. In dimension three,
$\essinf X=\lambda_1(D)$ and $\PP(X>\lambda_1(D))=1$.

In the standard planar radial setting,
$\Lambda_2:=\essinf X=\pi^2/4$ and $\PP(X>\pi^2/4)=1$.

\end{proposition}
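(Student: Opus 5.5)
The two dimensions need different mechanisms. In the plane the curve always disconnects every circle about $0$, and circular symmetrization yields a universal lower bound. In three dimensions the curve is a thin set, and only its capacity matters. In both cases I would prove three things in turn: a deterministic lower bound on $X$, strictness almost surely, and a support argument showing the bound is the essential infimum. Throughout, $\lambda_1(U)$ is the bottom of the spectrum of $A_U=-\Delta_U/(2d)$, given by \eqref{eq:rayleigh-convention}.

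\emph{Dimension two, lower bound.} Let $S=D\setminus[0,1)$ be the disk slit along a radius. Separating variables, the ground state of $S$ is $\sin(\theta/2)\,r^{-1/2}\sin(\pi r)$, with $-\Delta$-eigenvalue $\pi^2$; hence $\lambda_1(S)=\pi^2/4$ with the generator \eqref{eq:generator}. $K$ is connected and joins $0$ to $\partial D$, so for every $0<r<1$ the circle $\{|z|=r\}$ meets $K$. Each slice $(D\setminus K)\cap\{|z|=r\}$ is therefore an open subset of the circle missing at least one point. Apply circular symmetrization to a nonnegative test function $f\in H^1_0(D\setminus K)$. Its symmetrization has the same $L^2$ norm, a Dirichlet integral no larger (Pólya--Szegő for circular symmetrization), and its positivity set in each circle is an arc of length at most that slice, hence contained in $S$. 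This gives $\lambda_1(D\setminus K)\ge\lambda_1(S)=\pi^2/4$ for every such $K$.

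\emph{Dimension two, the infimum is attained.} Let $K_\varepsilon$ be the event that $K$ lies in the $\varepsilon$-neighbourhood of a fixed radius, in Hausdorff distance. The support theorem for radial ${\rm SLE}_2$ (the driving Brownian motion stays near a constant), transported through Lemma~\ref{lem:planar-free-natural} and rotation invariance, gives $\PP(K_\varepsilon)>0$. For complements of continua in the plane with a bounded number of components, Šverák's theorem gives continuity of Dirichlet eigenvalues under Hausdorff convergence. Hence $\lambda_1(D\setminus K)\to\pi^2/4$ on $K_\varepsilon$ as $\varepsilon\downarrow0$, so $\essinf X=\pi^2/4$.

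\emph{Dimension two, strictness.} I expect this to be the main obstacle. I would argue by contradiction through the ground state. Suppose $\lambda_1(D\setminus K)=\pi^2/4$. Then the nonnegative ground state $u$ of $D\setminus K$ gives equality in the Pólya--Szegő inequality. The plan is to use the equality-case theory for circular symmetrization of Brock type: equality forces $u$ to be circularly symmetric up to rotation on each circle. From this I would deduce that the zero set $K$ meets each circle in exactly one point, at a fixed angle, so that $K$ is a radial segment. An ${\rm SLE}_2$ trace is almost surely not a segment; its dimension is $5/4$, for instance. If the equality analysis proves too delicate, the fallback is a perturbative argument. Since $K$ is not a segment, it meets some circle in at least two points. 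Test against the rearranged ground state of $S$: the freed angular room strictly lowers the symmetrized Dirichlet energy on a set of radii of positive measure, and this should give a strict gap.

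\emph{Dimension three.} Monotonicity under inclusion gives $X\ge\lambda_1(D)$. For strictness, note that the ground state $\phi$ of $D$ is continuous and positive on compacts of $D$. If $\lambda_1(D\setminus K)=\lambda_1(D)$, then by simplicity of the ground state we would have $\phi\in H^1_0(D\setminus K)$. This forces $\phi=0$ quasi-everywhere on $K$, so $K$ must be polar. But $K$ has Hausdorff dimension $\beta>1$ almost surely, and Frostman's lemma then gives positive Newtonian capacity, which is a contradiction. For the essential infimum, let $T_r$ be the $r$-tube around a radial segment. Its capacity is $\asymp1/\log(1/r)\to0$, and Dirichlet eigenvalues are continuous under removal of sets of vanishing capacity (the Rauch--Taylor argument). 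Hence $\lambda_1(D\setminus T_r)\to\lambda_1(D)$. It remains to check that $\PP(K\subset T_r)>0$ for each $r$. I would obtain this from the discrete LERW support estimates used in Appendix~\ref{app:spectral}, namely that the walk follows a tube to the boundary with probability bounded below uniformly in $\delta$, together with Proposition~\ref{prop:3d-natural} and the portmanteau theorem applied to the open event.
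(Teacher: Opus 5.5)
\textbf{Gap: planar strictness.} The genuine gap is in $\PP(X>\pi^2/4)=1$. Grant, for the moment, an equality theorem of Brock type for circular symmetrization that applies to $H^1_0(D\setminus K)$ for a rough continuum $K$; you would still have to locate or prove it. As you state it, it gives only that for a.e.\ $r$ the slice $u(r,\cdot)$ is a rotation of $u^*(r,\cdot)$ by some angle $\theta_0(r)$.

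Your next step, that $K$ meets each circle at a \emph{fixed} angle, is the real content and does not follow. The competitors to exclude are spirals $\{re^{i\phi(r)}:0\le r<1\}$ with $\phi$ nonconstant. These meet every circle exactly once, so per-circle symmetry costs nothing for them. Excluding them needs an extra computation showing that an $r$-dependent rotation costs radial energy. For $u^*=R(r)|\sin(\phi/2)|$ one finds $\int|\partial_r u|^2=\int|\partial_r u^*|^2+\int\theta_0'(r)^2|\partial_\theta u^*|^2$, and using this requires regularity of $\theta_0$. Your fallback fails for the same reason. It is false that a continuum from $0$ to $\partial D$ other than a radius must meet some circle twice; spirals again are counterexamples. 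So \emph{freed angular room} cannot be the mechanism in general.

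\textbf{How the paper handles it.} The paper avoids symmetrization. With $v=\sqrt r\,f$ it writes the exact identity \eqref{eq:planar-slit-energy}. This makes $\int_D|\nabla f|^2-\pi^2\int_D f^2$ a sum of two explicit nonnegative remainders:
\begin{itemize}
\item the radial Dirichlet--Poincar\'e defect on $(0,1)$;
\item the angular defect $\int r^{-2}(|\partial_\theta v|^2-v^2/4)$ for slices vanishing at a point.
\end{itemize}
Both extend to $H^1_0(D\setminus K)$ by density, and for a ground state at $\pi^2/4$ both vanish. The radial equality case forces $v=\sin(\pi r)a(\theta)$ \emph{before} any angular analysis. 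The angular equality case then gives $a(\theta)=c|\sin((\theta-\theta_0)/2)|$ with a single $\theta_0$ independent of $r$. Quasi-everywhere vanishing on $K$, together with positive capacity of nondegenerate subcontinua, then forces $K$ to be the radius, so spirals are excluded automatically.

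Your symmetrization proof of $X\ge\pi^2/4$ itself is fine. For $f\in C_c^\infty(D\setminus K)$, $f^*$ vanishes near the slit because $0\in K$ and $\operatorname{supp}f$ is at positive distance from $K$. But symmetrization discards exactly the remainder terms that make the equality case elementary.

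\textbf{Remaining parts.} The rest is correct. Your three-dimensional strictness argument is the paper's. Rauch--Taylor is interchangeable with the paper's argument: a segment is polar, then apply \eqref{eq:neighborhood-spectrum}. Two corrections:
\begin{itemize}
\item Portmanteau gives $\PP(F)\ge\limsup_\delta\PP_\delta(F)$ only for \emph{closed} $F$. So apply it to $\{K\subset\overline{T}\}$, not to an open event.
\item In the plane you need neither \v{S}ver\'ak's theorem nor an SLE support theorem. The latter would in any case need care, since the radial driving function cannot stay near a constant for all capacity time. On $\{K\subset\gamma_*^\varepsilon\}$, monotonicity gives $X\le\lambda_1(D\setminus\gamma_*^\varepsilon)\downarrow\pi^2/4$. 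Your own random-walk tube argument, or the Brownian containment of Lemma~\ref{lem:containment} used in the paper, shows this event has positive probability.
\end{itemize}
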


\begin{proof}[Proof of Corollary~\ref{cor:phase-space}]
The density formula gives $\PP^{(m)}\ll\PP$ and, by
Proposition~\ref{prop:spectral-support},
\[
 \PP\ll\PP^{(m)}
 \quad\Longleftrightarrow\quad\PP\{Y_{m,T}>0\}=1
 \quad\Longleftrightarrow\quad\PP(X>m^2)=1.
\]

Proposition~\ref{prop:thresholds} gives both equivalences, including
their endpoints.
\end{proof}

\subsection{Unvisited regions at large boost}

\begin{proof}[Proof of Corollary~\ref{cor:space-filling}]
Theorem~\ref{thm:all-boosts} gives
$m^2<\lambda_1(D\setminus K)$, $\PP^{(m)}$-almost surely.
For any open cube $Q\subset D\setminus K$, domain monotonicity and
the eigenvalue of a cube for $-\Delta/(2d)$ give
\[
 m^2<\lambda_1(D\setminus K)\le\lambda_1(Q)
     =\frac{\pi^2}{2\operatorname{side}(Q)^2}.
\]
This proves~\eqref{eq:empty-cube}, simultaneously for all such cubes.
If $m>M(\ell)$, the bound is smaller than $\ell$, which proves the
second assertion.
\end{proof}

\section{The chordal driving function in two dimensions}
\label{sec:chordal}

In this section, we construct a continuum chordal law using the loop-decoration
theorem of Berestycki--Sauzedde~\cite{BS26}, and then identify the drift
of its driving function throughout the subcritical range $q<\lambda_1(D)$.
As discussed in Section~\ref{sec:intro}, the calculation follows the
martingale strategy of Makarov--Smirnov~\cite{MS} and
Chelkak--Wan~\cite{CW}; its additional input is a Green-kernel
comparison uniform over the evolving slits.

\subsection{Continuum tilt and the driving statement}
\label{subsec:chordal-setting}
\label{subsec:chordal-capacity}

Let $D$ be a bounded simply connected planar domain with analytic Jordan
boundary, $a,b\in\partial D$ distinct, and $\mathbb P_{D;a,b}$ the law
of chordal ${\rm SLE}_2$, with curve $\eta$ and trace $K$.  Brownian
motion has generator $L=\Delta/4$; write $A_U=-\Delta_U/4$ and
$G_U=A_U^{-1}$, also for its kernel.  Set $q=m^2<\lambda_1(D)$ and
\begin{equation}\label{eq:chordal-loop-functional}
 V_m(K)=\int_{\ell\subset D}(e^{qt_\ell}-1)
             \mathbf1_{\{\ell\cap K\ne\varnothing\}}\,\mu_D(d\ell).
\end{equation}
Here $t_\ell$ is Brownian duration.  For capacity time, fix
$\phi:D\to\mathbb H$ by $\phi(a)=0$, $\phi(b)=\infty$ and
$\Im\left(\phi(o)\right)=1$ at a reference point $o\in D$, and use
\begin{equation}\label{eq:chordal-loewner}
 g_t(z)=z+\frac{2t}{z}+O(|z|^{-2}),\qquad
 \partial_tg_t(z)=\frac{2}{g_t(z)-\xi_t}.
\end{equation}
Thus $\operatorname{hcap}(\phi(\eta[0,t]))=2t$.  Put
$D_t=D\setminus\eta[0,t]$, $a_t=\eta(t)$ and
$f_t=(g_t-\xi_t)\circ\phi$.  This clock differs from Brownian duration
and from the natural parametrization used in the radial theorem.

The normalizer is an exponential moment of the duration of a
Brownian excursion.

\par\nopagebreak[4]
\begin{proposition}
\label{prop:chordal-tilt}
For $q<\lambda_1(D)$,
\begin{equation}\label{eq:chordal-normalizer}
 Z_m(D;a,b):=\mathbb E_{D;a,b}e^{V_m(K)}
       =\mathbb E^{\mathrm{exc}}_{D;a,b}e^{qT}<\infty,
\end{equation}
where $T$ is the Brownian excursion duration.  The law
\begin{equation}\label{eq:chordal-density}
 \frac{d\mathbb P^{(m)}_{D;a,b}}{d\mathbb P_{D;a,b}}(K)
       =\frac{e^{V_m(K)}}{Z_m(D;a,b)}
\end{equation}
is equivalent to ${\rm SLE}_2$, with density in $L^p$ whenever
$p>1$ and $pq<\lambda_1(D)$.
\end{proposition}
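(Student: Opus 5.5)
The plan is to transfer the radial argument of Theorem~\ref{thm:subcritical} to the chordal setting, replacing the radial walk by a random walk excursion from $a$ to $b$. The one genuinely new input is the loop-decoration theorem of~\cite{BS26}.

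\textbf{Step 1: identify the normalizer.} The decoration theorem says the following. Take a chordal ${\rm SLE}_2$ trace $K$ in $D$ from $a$ to $b$, and add the loops of an independent intensity-one Brownian loop soup in $D$ that intersect $K$, attached chronologically. The result is a Brownian excursion from $a$ to $b$, whose loop erasure is $K$. Under this coupling the excursion duration splits as
\[
 T=t_K+\sum_{\ell\in\mathcal L,\ \ell\cap K\ne\varnothing}t_\ell ,
\]
where the curve itself has zero Brownian duration, so $t_K=0$. Conditionally on $K$, the attached loops form a Poisson process with intensity $\mathbf 1\{\ell\cap K\ne\varnothing\}\mu_D(d\ell)$. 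Campbell's formula then gives
\[
 \mathbb E\bigl[e^{qT}\mid K\bigr]=\exp\Bigl\{\int(e^{qt_\ell}-1)\mathbf 1\{\ell\cap K\ne\varnothing\}\mu_D(d\ell)\Bigr\}=e^{V_m(K)} .
\]
This identity holds in $[0,\infty]$ and needs no a priori finiteness. Averaging over $K$ gives~\eqref{eq:chordal-normalizer}.

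\textbf{Step 2: finiteness.} The excursion has density $h$-transformed from the killed semigroup. Its survival function satisfies $\mathbb P^{\rm exc}(T>t)\le Ce^{-\lambda_1(D)t}$ for $t\ge1$. This follows from the spectral expansion of $p_D(t,\cdot,\cdot)$ together with boundary derivative bounds for the eigenfunctions on an analytic domain; concretely, the Poisson-kernel normalization $\partial_n\partial_n p_D(t,a,b)\le C e^{-\lambda_1 t}$ for $t\ge1$. Small times are harmless because $e^{qt}\le e^{|q|}$ there. Hence $\mathbb E^{\rm exc}e^{qT}<\infty$ for every $q<\lambda_1(D)$. For $q<0$ everything is bounded by one, so this case is trivial.

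\textbf{Step 3: $L^p$ bound and equivalence.} For $p>1$ with $pq<\lambda_1(D)$, conditional Jensen, exactly as in the proof of Theorem~\ref{thm:subcritical}, gives
\[
 \mathbb E\, e^{pV_m(K)}=\mathbb E\bigl(\mathbb E[e^{qT}\mid K]\bigr)^p\le\mathbb E\,e^{pqT}<\infty .
\]
This shows $e^{V_m}/Z_m\in L^p$. The density is finite almost surely by integrability and strictly positive since $V_m>-\infty$. Indeed $V_m\ge0$ for $q\ge0$, and for $q<0$ we have $|V_m|\le$ the total hitting mass, which is finite almost surely by the same Campbell identity. Finiteness and positivity together give mutual equivalence with ${\rm SLE}_2$.

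\textbf{Main obstacle.} I expect Step 1 to be the hard part: checking that the hypotheses and normalization of~\cite{BS26} match ours. In particular, their loop measure and duration must agree with our clock $L=\Delta/4$, and their decoration must use exactly the loops hitting $K$ at unit intensity. Boundary points need care, since the excursion is a limit of Brownian motions started near $a$. If the available statement concerns boundary-to-boundary excursions only in $\mathbb H$, I would first prove the Campbell identity there and then transport it by conformal invariance of $\mu_D$ restricted to loops in $D$. The duration is not conformally invariant, so this step needs the time-change formula $t_\ell=\int|\phi'|^{-2}$. The cleanest route is therefore to apply~\cite{BS26} directly in $D$ whenever it is stated for general domains.
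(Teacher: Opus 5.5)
Your proposal is correct and follows the paper's proof essentially step for step. The Berestycki--Sauzedde decoration plus the Poisson exponential formula gives $\mathbb E[e^{qT}\mid K]=e^{V_m(K)}$; the paper matches the clock exactly as you anticipate, from $\Delta/2$ in $\mathbb H$ to $\Delta/4$ in $D$ via the conformal clock. The $e^{-\lambda_1(D)t}$ decay of the excursion-duration density gives finiteness; the paper gets it from a three-factor semigroup decomposition with the $L^2$ spectral bound on the middle factor, rather than pointwise eigenfunction-derivative bounds. Conditional Jensen then gives the $L^p$ claim.

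Your aside on $q<0$ lies outside the range here, since $q=m^2\ge0$, and its justification is wrong. Short loops make the total mass of loops hitting $K$ infinite. Positivity in that case would instead follow from $e^{V_m(K)}=\mathbb E[e^{qT}\mid K]>0$.
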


\begin{proof}

The chordal extension of \cite[Theorem~1.1, Section~1.1]{BS26}
attaches an independent intensity-one loop soup to ${\rm SLE}_2$
in $(\mathbb H;0,\infty)$, producing a Brownian excursion, with
endpoint localization included. Their rooted loop measure
\cite[Section~1.2]{BS26} is
$\dd x\,\dd t/(2\pi t^2)$ times the probability bridge for $\Delta/2$.
Map to $D$ with the conformal clock and then double durations:
this becomes $\dd x\,\dd t/(\pi t^2)$ times the bridge for $\Delta/4$,
exactly our $\mu_D$. Thus no extra factor of two enters the soup
intensity, and the excursion duration $T$ is the sum of its attached
loop durations.
  The Poisson exponential formula, first on
finite duration windows and then by monotone convergence, gives
$\mathbb E[e^{qT}\mid K]=e^{V_m(K)}$.

The excursion duration density is proportional to the inward normal
derivative at both endpoints of the killed heat kernel.  At large
times it is bounded by $Ce^{-\lambda_1(D)t}$: differentiate the outer
factors in a three-factor semigroup decomposition and apply the
$L^2$ spectral bound to the middle factor.  Analytic boundary makes
the differentiated outer factors $L^2$.  At small times the distinct
endpoints give an integrable Gaussian off-diagonal bound.  Hence
\eqref{eq:chordal-normalizer} is finite.  Conditional Jensen gives
$\mathbb E e^{pV_m(K)}\le\mathbb E^{\mathrm{exc}}e^{pqT}<\infty$;
the density is therefore finite and strictly positive almost surely.
\end{proof}

Assume that $D\subset B(0,R)$. With the absolute constant $C_0$ of
Lemma~\ref{lem:chordal-gauge}, write
\begin{equation}\label{eq:chordal-subcritical}
\rho:=\frac{q}{\lambda_1(D)}<1,\qquad M_R=C_0R^2.
\end{equation}
Throughout the subcritical range $q<\lambda_1(D)$, the drift of the driving function is a ratio
of boosted Poisson kernels.

\par\nopagebreak[4]
\begin{theorem}
\label{thm:chordal-driving}
For $q<\lambda_1(D)$, the law~\eqref{eq:chordal-density} has a
continuous driving semimartingale with quadratic variation $2t$.
Its driving function satisfies
\begin{equation}\label{eq:chordal-driving}
 d\xi_t=\sqrt2\,dB_t^{(m)}+2\lambda_t\,dt,
 \qquad \lambda_t=\frac{Q_t^{(m)}(b)}{P_t^{(m)}(b)},
\end{equation}
where $B^{(m)}$ is real standard Brownian motion and the quotient is
the boundary limit in~\eqref{eq:chordal-lambda}.
\end{theorem}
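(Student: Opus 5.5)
The plan follows the Makarov--Smirnov / Chelkak--Wan martingale strategy: exhibit an explicit $\mathbb P_{D;a,b}$-martingale that equals the Radon--Nikodym density of $\mathbb P^{(m)}_{D;a,b}$ restricted to $\mathcal F_t=\sigma(\eta[0,t])$, and then read off the drift by Girsanov.

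\textbf{Step 1: the density martingale.} By Proposition~\ref{prop:chordal-tilt} and the domain Markov property of ${\rm SLE}_2$, the conditional law of $\eta[t,\infty)$ given $\mathcal F_t$ is chordal ${\rm SLE}_2$ in $(D_t;a_t,b)$. Loops in $D$ hitting $K$ split into two classes:
\begin{itemize}
\item loops hitting $\eta[0,t]$, whose mass is $V_m(\eta[0,t])$ and is $\mathcal F_t$-measurable;
\item loops avoiding $\eta[0,t]$ but hitting the future curve, which by restriction are exactly the loops in $D_t$ hitting $\eta[t,\infty)$.
\end{itemize}
Hence
\[
 M_t:=\EE\bigl[e^{V_m(K)}\mid\mathcal F_t\bigr]=e^{V_m(\eta[0,t])}Z_m(D_t;a_t,b).
\]
Here $Z_m(D_t;a_t,b)$ is finite because $\lambda_1(D_t)\ge\lambda_1(D)>q$, and \eqref{eq:chordal-normalizer} for the slit domain follows from the same Berestycki--Sauzedde decoration argument.

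\textbf{Step 2: Poisson-kernel form of the normalizer.} I would write $Z_m(D_t;a_t,b)$ as a ratio of boosted to ordinary excursion (boundary Poisson) kernels, i.e.\ $\partial_n\partial_n G^{(q)}_{D_t}(a_t,b)/\partial_n\partial_n G_{D_t}(a_t,b)$, with $G^{(q)}_U=(A_U-q)^{-1}$. The resolvent identity
\[
 G^{(q)}_U=G_U+qG_UG^{(q)}_U
\]
expresses the boosted Poisson kernel $P^{(m)}_t(b)$ as the ordinary kernel plus a correction. In the coordinates $f_t$, the ordinary kernel is explicit, $\propto\Im(-1/f_t)$ near $b$. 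The quantity $Q^{(m)}_t(b)$ arises as the derivative in the tip variable, i.e.\ the coefficient of $\Re(-1/f_t(z))$-type terms in the expansion at $a_t$; the boundary limit in \eqref{eq:chordal-lambda} gives the ratio $\lambda_t$.

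\textbf{Step 3: Itô calculus and Girsanov.} Along the Loewner flow, $e^{V_m(\eta[0,t])}$ is of finite variation in capacity time. To see this, note that the loops hitting the growing tip have mass $d\,V_m=\bigl[\text{boosted minus ordinary Brownian loop mass}\bigr]$, which is absolutely continuous in $t$ with density computable from the Hadamard variation of $G^{(q)}$. Applying Itô to $\log M_t$ with $d\xi_t=\sqrt2\,dB_t$ under ${\rm SLE}_2$, the $dt$ terms must cancel because $M$ is a martingale, and this cancellation serves as a consistency check. The martingale part is
\[
 d\log M_t=\sqrt2\,\partial_\xi\log P^{(m)}_t\,dB_t .
\]
Girsanov then gives the drift $2\,\partial_\xi\log P^{(m)}_t=2Q^{(m)}_t/P^{(m)}_t$, and the quadratic variation $2t$ is unchanged. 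Since $M_t>0$ and it is uniformly integrable (its limit $e^{V_m}$ lies in $L^p$ by Proposition~\ref{prop:chordal-tilt}), Girsanov applies on all of $[0,\infty)$.

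\textbf{Main obstacle.} The hard step is justifying the differentiation in Step~2 uniformly over the growing slits. One needs the boosted Green kernel $G^{(q)}_{D_t}$ to be comparable to $G_{D_t}$ up to a factor depending only on $\rho$ and $M_R$ of \eqref{eq:chordal-subcritical}, uniformly in $t$, including near the rough tip $a_t$ and near $b$. My first attempt would be a Neumann series in the resolvent identity, using $\|qG_{D_t}\|\le\rho<1$ on $L^2$ together with the gauge bound $G_{D_t}1\le M_R$ from Lemma~\ref{lem:chordal-gauge}. This yields the pointwise bound $G_{D_t}\le G^{(q)}_{D_t}\le C_\rho G_{D_t}$, uniformly over slits. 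That uniform comparison lets me pass to the boundary limits defining $P^{(m)}_t,Q^{(m)}_t$ and differentiate under the integral via harmonicity of $f_t$-coordinates away from the tip.
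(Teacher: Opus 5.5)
Your architecture matches the paper's. You use the closed martingale $e^{F_t}R_t/R_0$ of Lemma~\ref{lem:chordal-conditional-density} and identify $R_t$ as a ratio of boosted to ordinary excursion kernels. You then cancel the finite-variation terms and apply Girsanov with drift $2\lambda_t$, where $\lambda_t=\lim_{z\to b}Q_t^{(m)}(z)/P_t^{(m)}(z)=\frac q{R_t}\int_{D_t}Q_tk_t^{(m)}\,dw$. Your claim that $F_t$ is absolutely continuous is also right: $dF_t=8\pi q\bigl(\int_{D_t}P_tP_t^{(m)}\,dw\bigr)dt$, so $R_t\,dF_t$ cancels the drift in \eqref{eq:chordal-dr}.

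The gap is in the step you single out as the main obstacle. A Neumann series controlled by $\|qG_{D_t}\|_{L^2\to L^2}\le\rho<1$ and the torsion bound $\sup_x\int G_{D_t}(x,w)\,dw\le M_R$ does not give $G_{D_t}^{(m)}\le C_\rho G_{D_t}$ pointwise. The $L^2$ bound controls the series only as an operator, and the torsion bound only in $L^\infty\to L^\infty$. Neither controls the kernel ratio as $x$ or $y$ approaches the rough boundary, where $G_{D_t}(x,y)\to0$. For example, estimating the remainder $q^2\langle G_U(x,\cdot),G_U^{(m)}G_U(\cdot,y)\rangle$ by Cauchy--Schwarz yields $\|G_U(x,\cdot)\|_2\|G_U(\cdot,y)\|_2$, which is not bounded by a multiple of $G_U(x,y)$ near the boundary.

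Note also that Lemma~\ref{lem:chordal-gauge} does not supply a torsion bound. Its input \eqref{eq:chordal-3g} is the integrated $3G$ inequality $\int G_UG_U\le M_RG_U$, which does encode boundary behaviour. Even so, termwise iteration gives only $G_U^{[j+1]}\le M_R^jG_U$. Then $\sum_jq^jG_U^{[j+1]}\le\sum_j(qM_R)^jG_U$ converges only for $q<1/M_R$, not throughout $q<\lambda_1(D)$.

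The missing idea is the Frazier--Nazarov--Verbitsky exponential bound for Neumann series of quasi-metrically modifiable kernels. It converts your two kinds of input, namely $L^2(q\,dx)$-norm $q/\lambda_1(U)\le\rho<1$ and $3G$ constant $M_R$, into $G_U^{(m)}\le e^{C_\rho qM_R}G_U$ with $C_\rho$ independent of $U$. Applying it requires two things. First, the Bass--Burdzy $3G$ inequality with a constant depending only on the diameter. Second, a check in half-plane coordinates that $1/G_U$, modified by $h=\Im\psi$, is a quasi-metric with an absolute constant for every simply connected $U$.

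You then still need a uniform tail bound for the normalized series, obtained from the comparison at some $q'\in(q,\lambda_1(D))$. This lets you interchange the Martin-boundary limits at $a_t$ and $b$ with the series. That interchange is what defines $P_t^{(m)},k_t^{(m)},R_t$ with $1\le R_t\le C_m$, and gives the domination $|Q_t|k_t^{(m)}\le4C_mP_t$.

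Only with these, together with the boosted Hadamard formula of Lemma~\ref{lem:chordal-hadamard} and \eqref{eq:chordal-integrability}, can you run It\^o and stochastic Fubini on $R_t=1+q\int_{D_t}P_tk_t^{(m)}\,dw$. As written, your identity $d\log M_t=\sqrt2\,\partial_\xi\log P_t^{(m)}\,dB_t$ is only formal, since $R_t$ depends on the whole slit and not on $\xi_t$ alone.
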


The semimartingale assertion follows from
Proposition~\ref{prop:chordal-tilt} and Girsanov's theorem, as in
\cite[Section~2.6]{CW}. Below we identify the drift using
\cite[Sections~4.1--4.3]{CW}. To translate their conventions, replace
their operator $-\Delta+\mu^2$ by $-\Delta-4q$ and multiply its
Green kernel by four. The massive upper bounds are replaced by the
uniform boosted comparison in Lemma~\ref{lem:chordal-gauge}.

\subsection{Boosted Green and Poisson kernels}
\label{subsec:chordal-kernels}
We now introduce the boosted kernels and establish comparisons that are uniform over all slit domains inside $D$.

For $U\subset D$, domain monotonicity gives $q<\lambda_1(U)$, and
\begin{equation}\label{eq:chordal-resolvent}
 G_U^{(m)}=(A_U-q)^{-1}=G_U+qG_UG_U^{(m)}.
\end{equation}
All kernels are extended by zero outside their domains.

The key estimate compares boosted and ordinary kernels, with a
constant that does not depend on the shape of the domain.

\par\nopagebreak[4]
\begin{lemma}
\label{lem:chordal-gauge}
For every bounded planar open set $U\subset B(0,R)$ and $x,y$ in
one connected component,
\begin{equation}\label{eq:chordal-3g}
 \int_U G_U(x,w)G_U(w,y)dw\le M_RG_U(x,y),\qquad M_R=C_0R^2.
\end{equation}
If, in addition, $U\subset D$ is simply connected and
$q<\lambda_1(D)$, then
\begin{equation}\label{eq:chordal-green-comparison}
 G_U\le G_U^{(m)}\le C_mG_U,\qquad C_m=C_m(D,q)<\infty,
\end{equation}
where $C_m$ is independent of $U$.
The same comparison holds for Poisson and excursion kernels obtained
by Martin-boundary limits at either or both endpoints.
\end{lemma}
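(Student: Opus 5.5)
The plan is to read both inequalities as statements about Brownian motion conditioned to die at $y$. Fix $y$ and let $h=G_U(\cdot,y)$. The Doob $h$-transform of killed Brownian motion in $U$ is Brownian motion started at $x$ and conditioned to converge to $y$; write $\EE^y_x$ for it and $\tau$ for its lifetime. Two standard identities then hold:
\[
 \frac{\int_U G_U(x,w)G_U(w,y)\dd w}{G_U(x,y)}=\EE^y_x[\tau],\qquad
 \frac{G^{(m)}_U(x,y)}{G_U(x,y)}=\EE^y_x\bigl[e^{q\tau}\bigr].
\]
The second identity follows from $G^{(m)}_U(x,y)=\int_0^\infty e^{qt}p_U(t,x,y)\dd t$ and is valid while this integral converges, which holds since $q<\lambda_1(D)\le\lambda_1(U)$.

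\emph{Step 1: the 3G bound \eqref{eq:chordal-3g}.} I will invoke the Cranston--McConnell lifetime theorem: for every planar domain, $\EE^y_x\tau\le c|U|$ with an absolute constant $c$ (rescaled to our generator $\Delta/4$). Since $|U|\le\pi R^2$, this gives $M_R=C_0R^2$. The case of $x,y$ in one component reduces to that component, which has smaller area.

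\emph{Step 2: the lower bound and a short-time cutoff.} The lower bound $G_U\le G^{(m)}_U$ is immediate because $q\ge0$ (or from the Neumann series). For the upper bound I split the lifetime at a fixed time $T_0$:
\[
 \EE^y_x e^{q\tau}\le e^{qT_0}+\sum_{k\ge0}e^{q(k+1)T_0}\,\PP^y_x(\tau>kT_0+T_0).
\]
It then suffices to prove a tail bound $\PP^y_x(\tau>t)\le C e^{-\lambda' t}$ with $q<\lambda'<\lambda_1(D)$ and $C$ independent of $U$, $x$, $y$. Step 1 with Markov's inequality and the Markov property of the $h$-process gives such a tail, but only with rate of order $R^{-2}$. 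That covers $q<c/M_R$ at once, and then
\[
 G^{(m)}_U=\sum_{k\ge0}q^kG_U^{k+1}\le\sum_{k\ge0}(qM_R)^kG_U
\]
finishes in that regime.

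\emph{Step 3: the full range $q<\lambda_1(D)$ (main obstacle).} Here the rate must be pushed up to nearly $\lambda_1(U)\ge\lambda_1(D)$ uniformly in $U$. I will combine the $L^2$ spectral bound in the middle of the resolvent with 3G at the ends:
\[
 G^{(m)}_U=\sum_{j\le 3}q^jG_U^{j+1}+q^4\,G_U^2\,G^{(m)}_U\,G_U^2 .
\]
Since $\|G^{(m)}_U\|_{L^2\to L^2}\le(\lambda_1(D)-q)^{-1}$ uniformly in $U$, the last term is at most
\[
 (\lambda_1(D)-q)^{-1}\,\|G_U^2(x,\cdot)\|_2\,\|G_U^2(\cdot,y)\|_2 .
\]
The main difficulty is to bound this product by $C\,G_U(x,y)$ uniformly over simply connected $U\subset D$, including near the slits. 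For this I would first try the conformal map $\psi:U\to\mathbb D$. Green's function is conformally invariant, and Koebe distortion together with the area bound on $U$ should control the weights $|\psi'|^{-2}$ in $\int G_U^2(x,w)^2\dd w$. This should reduce the estimate to the disk, where both sides have explicit boundary behaviour proportional to $(1-|\psi(x)|)(1-|\psi(y)|)$. The lower-order terms $G_U^{j+1}$, $j\le3$, are handled by iterating Step 1. If the $L^2$ route fails near the boundary, the fallback is an intrinsic-ultracontractivity estimate for simply connected planar domains with constants depending only on $R$ (Bañuelos-type), which yields the needed uniform tail directly.

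\emph{Step 4: boundary kernels.} Finally, the comparison passes to Poisson and excursion kernels by dividing by the ordinary kernel and taking Martin-boundary limits $y\to b$ (and $x\to a$). The inequality $G_U\le G_U^{(m)}\le C_mG_U$ is preserved under these limits because the ratio $\EE^y_x e^{q\tau}$ is bounded uniformly in the endpoints.
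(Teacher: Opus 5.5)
Your Steps~1 and~2 are correct. Applying the Cranston--McConnell bound to $h=G_U(\cdot,y)$ on $U\setminus\{y\}$ gives $\EE^y_x\tau\le c|U|\le C_0R^2$. This is a legitimate, slightly sharper substitute for the Bass--Burdzy $3G$ inequality the paper uses, and the Neumann series then gives \eqref{eq:chordal-green-comparison} for $qM_R<1$.

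The gap is Step~3, which is the real content of the lemma: the full range $q<\lambda_1(D)$ with $C_m$ independent of $U$. Your $L^2$ route needs
\[
 \|G_U^2(x,\cdot)\|_{L^2(U)}\,\|G_U^2(\cdot,y)\|_{L^2(U)}\le C\,G_U(x,y),
\]
and this is false uniformly over simply connected $U\subset D$. Let $U$ be two disks $B_1,B_2\subset D$ of fixed radius joined by a corridor of width $\epsilon$ and fixed length, with $x,y$ their centres.
\begin{itemize}
\item Domain monotonicity gives $G_U^2(x,\cdot)\ge G_{B_1}^2(x,\cdot)$ on $B_1$, so the left side is bounded below independently of $\epsilon$.
\item Since Brownian motion must cross the corridor, $G_U(x,y)\le Ce^{-c/\epsilon}$.
\end{itemize}
The lemma itself survives here, because the conditioned process crosses the corridor in time $O(\epsilon)$. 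What fails is your method: the operator-norm bound on the middle factor discards the geometry, and no estimate of the end factors can repair that.

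For the same reason the conformal reduction cannot work. $\psi$ sends $B_2$ onto a set of exponentially small area near $\partial\mathbb D$, so the Jacobian weight concentrates there. Koebe distortion and the total-area bound do not prevent this.

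The fallback also fails as stated. Intrinsic-ultracontractivity constants are not uniform over such domains: in an asymmetric dumbbell the ground state is $O(e^{-c/\epsilon})$ in the smaller room, while $p_U(1,x,x)$ there is of order one. Moreover, a tail bound $\PP^y_x(\tau>t)\le Ce^{-\lambda't}$ with $\lambda'>q$ and $C$ independent of $U$ is equivalent to the comparison you are trying to prove.

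What is missing is a mechanism that turns the spectral condition $q\le\rho\,\lambda_1(U)$ directly into a pointwise bound, without going through ground-state behaviour. The paper first shows that for simply connected $U$, $G_U$ is quasi-metrically modifiable with an absolute constant. It does this by transporting to $\mathbb H$ with weight $\Im\psi$, using conformal invariance of the Green function. It then applies the Frazier--Nazarov--Verbitsky estimate for Neumann series of such kernels with operator norm at most $\rho<1$:
\[
 G_U^{(m)}(x,y)\le G_U(x,y)\exp\Bigl\{C_\rho\, q\,\frac{\int_U G_U(x,w)G_U(w,y)\dd w}{G_U(x,y)}\Bigr\}.
\]
Combined with \eqref{eq:chordal-3g}, this gives $C_m=e^{C_\rho qM_R}$.

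Separately, your Step~4 only shows that the comparison survives boundary limits once those limits exist. The paper also proves that the boosted Martin limits exist. It passes to the limit in $G_U^{[j+1]}/G_U$ term by term, which needs the Bass--Burdzy logarithmic majorant for uniform integrability; an $L^1$ lifetime bound does not supply this. It then uses the comparison at some $q'\in(q,\lambda_1(D))$ to make the normalized series converge uniformly.
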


\begin{proof}
Bass--Burdzy
\cite[Theorem~1.1]{BB95} bounds
$G_U(x,w)G_U(w,y)/G_U(x,y)$ by a diameter-dependent constant times
$1+\log^+(1/|x-w|)+\log^+(1/|w-y|)$ (which is known as the planar $3G$ inequality).
Rescale to diameter at most two and integrate to obtain
\eqref{eq:chordal-3g}; the change from their generator $\Delta/2$
to $\Delta/4$ only changes $C_0$.

For simply connected $U\subset D$, $G_U$ is quasi-metrically
modifiable with an absolute constant. Indeed, choose a conformal map
$\psi:U\to\mathbb H$ and put $h=\Im(\psi)$. The reciprocal of
$G_U(x,y)/(h(x)h(y))$, in half-plane coordinates and up to a constant,
is
\[
 d(z,w)=\frac{\Im (z)\,\Im (w)}
 {\log\bigl(1+4\Im (z)\,\Im \left(\frac{w}{|z-w|^2}\right)\bigr)}.
\]
To check the quasi-triangle inequality, put $r=|z-w|$ and
$H=\max\{\Im (z),\Im (w)\}$. Always $d(z,w)\ge r^2/4$, and
$d(z,w)\asymp r^2$ when $r\ge H/2$, so this case follows from the
Euclidean triangle inequality. If $r<H/2$, then $d(z,w)\le CH^2$.
A third point at distance at least $H/4$ from either endpoint is
therefore handled by the lower bound. Otherwise all three heights
are comparable, and one of the two joining distances is at least
$r/2$; the displayed formula, or monotonicity of
$a/\log(1+4a/s^2)$ in $a,s>0$, gives the required inequality with
an absolute constant.

For $q>0$, we use the exponential upper bound for kernels
of Neumann series from Frazier--Nazarov--Verbitsky
\cite[Corollary~3.3]{FNV}, which holds for quasi-metrically
modifiable kernels when the associated operator has norm less
than one. Apply this with kernel $G_U$ and measure $q\,dx$.
The associated operator has $L^2(q\,dx)$ norm
$q/\lambda_1(U)\le\rho<1$, so their estimate gives
\[
 G_U^{(m)}(x,y)\le G_U(x,y)
 \exp\!\left\{C_\rho q\,
 \frac{\int_U G_U(x,w)G_U(w,y)\,dw}{G_U(x,y)}\right\}
 \le e^{C_\rho qM_R}G_U(x,y).
\]
Here $C_\rho$ is independent of $U$. The lower bound follows from
positivity of the Neumann series, and $q=0$ is immediate.

For the boundary statement, fix a simply connected $U$
and two distinct prime ends $a,b$. Choose
$\psi:U\to\mathbb H$ with $\psi(a)=0$ and $\psi(b)=\infty$,
and put $P(w)=-\frac1\pi\Im\left(\psi(w)^{-1}\right)$, $k(w)=4\Im\left(\psi(w)\right)$.
Then we have $\lim_{y\to b}\frac{G_U(x,y)}{P(y)}=k(x)$ and $\lim_{x\to a}\frac{G_U(x,y)}{k(x)}=P(y)$.
Write $G_U^{[j]}$ for the $j$-th operator power, with
$G_U^{[0]}=I$, and define
\[
 F_j(x,y)=\frac{G_U^{[j+1]}(x,y)}{G_U(x,y)},
 \qquad
 B_{x,y}(w)=
 \frac{G_U(x,w)G_U(w,y)}{G_U(x,y)}.
\]
The $3G$ inequality used above gives $0\le F_j\le M_R^j$ and $\sup_{x\ne y\in U}\int_U B_{x,y}(w)^2\,dw
 \le C_R$.
Indeed, the logarithmic majorant used above has uniformly
bounded $L^2$ norm on $B(0,R)$. Moreover, for fixed $w$,
\[
 B_{x,y}(w)\longrightarrow
 \begin{cases}
  G_U(x,w)k(w)/k(x),&y\to b,\quad x\text{ fixed},\\
  P(w)k(w),&x\to a,\quad y\to b.
 \end{cases}
\]
Using the recursion $F_j(x,y)=\int_U B_{x,y}(w)F_{j-1}(w,y)\,dw$ and $F_0=1$,
induction and uniform integrability therefore give, for every
fixed $j\ge0$,
\[
 \lim_{y\to b}
 \frac{G_U^{[j+1]}(x,y)}{P(y)}
 =(G_U^{[j]}k)(x),
\]
and, by symmetry, the analogous limit at $a$ with $P$ and
$k$ interchanged. Applying the same recursion at both endpoints
gives, for $j\ge1$,
\[
 \lim_{\substack{x\to a\\y\to b}}
 \frac{G_U^{[j+1]}(x,y)}{G_U(x,y)}
 =\int_U P(w)(G_U^{[j-1]}k)(w)\,dw.
\]
All these integrals are finite by the same $3G$ inequality.

It remains to interchange the boundary limits with the
Neumann series. Choose $q<q'<\lambda_1(D)$. Positivity and
the comparison already proved at $q'$ yield, uniformly in
$x,y$ and $U$,
\[
 \begin{aligned}
 \frac{\sum_{j\ge n}q^jG_U^{[j+1]}(x,y)}{G_U(x,y)}
 \le
 \left(\frac q{q'}\right)^n
 \frac{G_U^{(\sqrt{q'})}(x,y)}{G_U(x,y)}\le C(D,q')\left(\frac q{q'}\right)^n,
 \qquad n\ge1.
 \end{aligned}
\]
Thus the normalized series converges uniformly, and its
boundary limits can be taken term by term. The one-sided
limits give the boosted Poisson kernels, while the two-sided
limit gives the boosted excursion kernel. Passing to the
limits in the interior comparison proves the asserted
boundary comparisons with the same constant $C_m$.
\end{proof}

Define the chart-dependent kernels
\begin{equation}\label{eq:chordal-pq}
 P_t(z)=-\pi^{-1}\Im \left(f_t(z)^{-1}\right),\qquad
 Q_t(z)=-\pi^{-1}\Im \left(f_t(z)^{-2}\right),
\end{equation}
and, for $H=P,Q$,
\begin{equation}\label{eq:chordal-boosted-p}
 H_t^{(m)}(z)=H_t(z)+q\int_{D_t}H_t(w)G_{D_t}^{(m)}(w,z)dw.
\end{equation}
The estimates below justify the $Q$ integral for almost every time.
The potential is constant in physical coordinates: a conformal map
$F:U\to\mathbb H$ changes it to $q|(F^{-1})'|^2$.
At the target put
\begin{equation}\label{eq:chordal-target-kernel}
 k_t^{(m)}(w)=\lim_{z\to b}\frac{G_{D_t}^{(m)}(w,z)}{P_t(z)},
 \qquad R_t=\lim_{z\to b}\frac{P_t^{(m)}(z)}{P_t(z)}.
\end{equation}
Our Green normalization gives $k_t(w)=4\Im \left(f_t(w)\right)$.
Lemma~\ref{lem:chordal-gauge} and the resolvent identity yield
\begin{gather}
 k_t\le k_t^{(m)}\le C_mk_t,\qquad
 P_t\le P_t^{(m)}\le C_mP_t,\qquad 1\le R_t\le C_m,
 \label{eq:chordal-boundary-comparison}\\
 R_t=1+q\int_{D_t}P_t(w)k_t^{(m)}(w)dw.
 \label{eq:chordal-r-integral}
\end{gather}
We shall repeatedly use the direct half-plane bounds
\begin{equation}\label{eq:chordal-pq-k}
 P_tk_t\le4/\pi,\qquad |Q_t|k_t\le4P_t.
\end{equation}

\subsection{Conditional density and domain variation}
\label{subsec:chordal-observable}

We express the conditional density through Poisson kernels and
compute how these kernels change as the slit grows.

Conditioning on an initial segment separates the loop weight
accumulated so far from the partition function of the remaining
domain.

\par\nopagebreak[4]
\begin{lemma}
\label{lem:chordal-conditional-density}
For $q<\lambda_1(D)$, let $F_t=V_m(\eta[0,t])$.
Then $F_t$ is continuous and increasing, and
\begin{equation}\label{eq:chordal-density-factorization}
 \mathcal D_t=\mathbb E_{D;a,b}
      [e^{V_m(K)}/Z_m\mid\mathcal F_t]=e^{F_t}R_t/R_0.
\end{equation}
\end{lemma}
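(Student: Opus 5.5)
Split the loop functional additively along the curve and use the domain Markov property of ${\rm SLE}_2$. Write $K=\eta[0,t]\cup K'$, where $K'=\eta[t,\infty)$ is the trace of the future curve. A loop in $D$ that meets $K$ either meets $\eta[0,t]$, or avoids $\eta[0,t]$ and meets $K'$. The loops of the second kind are exactly the loops in $D_t=D\setminus\eta[0,t]$ that meet $K'$, by the restriction property of $\mu_D$. Hence
\[
 V_m(K)=F_t+V_m^{D_t}(K'),\qquad
 V_m^{D_t}(K')=\int_{\ell\subset D_t}(e^{qt_\ell}-1)\ind_{\{\ell\cap K'\ne\varnothing\}}\,\mu_{D_t}(\dd\ell).
\]
Given $\mathcal F_t$, the domain Markov property makes $K'$ chordal ${\rm SLE}_2$ in $(D_t;a_t,b)$. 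Taking conditional expectations gives
\[
 \mathcal D_t=e^{F_t}\,\EE_{D_t;a_t,b}e^{V_m^{D_t}(K')}/Z_m(D;a,b).
\]
Since $F_t$ is $\mathcal F_t$-measurable, this step is direct.

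Next identify the conditional partition function. Apply Proposition~\ref{prop:chordal-tilt} in the slit domain $D_t$; this is allowed since $q<\lambda_1(D)\le\lambda_1(D_t)$. Its boundary is not analytic, so I rerun the Berestycki--Sauzedde argument after mapping to $\mathbb H$ by $f_t$. That gives $Z_m(D_t;a_t,b)=\EE^{\rm exc}_{D_t;a_t,b}e^{qT}$. The excursion expectation equals the ratio of boosted to ordinary excursion kernels: expand $e^{qT}$ as a Feynman--Kac weight, or equivalently use the Neumann series $G^{(m)}_U=\sum_j q^jG_U^{[j+1]}$. Taking the Martin-boundary limits at both endpoints, as in Lemma~\ref{lem:chordal-gauge}, produces
\[
 \lim_{x\to a_t,\,y\to b}\frac{G^{(m)}_{D_t}(x,y)}{G_{D_t}(x,y)}.
\]
Taking the limit at $b$ first and using \eqref{eq:chordal-target-kernel} gives $k^{(m)}_t/k_t$. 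Letting $x\to a_t$ then gives $1+q\int P_tk_t^{(m)}$, which is $R_t$ by \eqref{eq:chordal-r-integral}. The same computation at $t=0$ gives $Z_m(D;a,b)=R_0$. This requires that the chart normalization in $\phi$ and $f_t$ cancels in ratios, which holds because both numerator and denominator are excursion kernels in the same chart. Then $\mathcal D_t=e^{F_t}R_t/R_0$.

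For the regularity claims, monotonicity of $F_t$ is immediate because the hitting sets increase. For continuity, decompose $F_{t+s}-F_t$ as the mass of loops hitting $\eta[t,t+s]$ but not $\eta[0,t]$. By the short-loop bound of Lemma~\ref{lem:subcritical-short-loops} (the $3G$/covering argument adapted to the chordal setting), together with finiteness of the long-loop mass under $q<\lambda_1(D)$, this mass is dominated by an integrable quantity. It tends to zero as $s\downarrow0$ because $\bigcap_s\eta[t,t+s]=\{\eta(t)\}$ and the loops through a single point carry zero mass. Monotone/dominated convergence then gives continuity.

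The main obstacle is justifying the excursion identity $Z_m(D_t)=R_t$ in the nonsmooth slit domain: both the loop-decoration identity and the kernel limits must hold at the prime end $a_t$. I would handle this by conformal invariance, mapping to $\mathbb H$ where Berestycki--Sauzedde applies and the potential becomes $q|(F^{-1})'|^2$. I would then use the uniform comparison and uniform convergence of the normalized Neumann series from Lemma~\ref{lem:chordal-gauge} to exchange the boundary limits with the sum.
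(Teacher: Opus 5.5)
Your proposal is correct and follows the paper's proof. It uses the same splitting of the hitting loops via restriction and the ${\rm SLE}_2$ domain Markov property, and it identifies the remaining partition function with the boosted/ordinary excursion-kernel ratio $R_t$ by loop decoration and Feynman--Kac, with Lemma~\ref{lem:chordal-gauge} justifying the boundary limits and $Z_m=R_0$ at $t=0$. Continuity of $F_t$ then comes, as in the paper, from finiteness of the weighted hitting-loop mass together with polarity of points.

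Two small remarks. First, that finiteness follows at once from $\mathbb E e^{V_m(K)}=Z_m<\infty$ (Proposition~\ref{prop:chordal-tilt}), so no chordal covering argument is needed. Second, the right limit $F_{t+s}-F_t\to0$ that you treat needs only finiteness, since loops through $\eta(t)$ already hit $\eta[0,t]$. The zero mass of loops through $\eta(t)$ is what gives the left limit $F_t-F_{t-s}\to0$, so you should state that case explicitly.
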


\begin{proof}
Split loops hitting $K$ into those hitting the initial segment and
those contained in $D_t$ hitting the remainder.  Loop restriction
and the ${\rm SLE}_2$ domain Markov property identify the latter
partition function with that for $(D_t;a_t,b)$.  By loop decoration
and Feynman--Kac it is the excursion duration moment $R_t$.
The conformal clock is as in Proposition~\ref{prop:chordal-tilt};
Lemma~\ref{lem:chordal-gauge} justifies the boundary passage.
Finite-time slits have locally connected boundary, tip prime end
$a_t$, and an analytic target neighborhood.  At $t=0$ this gives
$Z_m=R_0$, and hence~\eqref{eq:chordal-density-factorization}.

The weighted loop measure hitting $K$ is finite almost surely.
Its pushforward by first intersection time has no atom at $s$:
such loops would pass through $\eta(s)$, and points are polar for
planar Brownian motion.  Conditionally on the curve this applies to
every $s$.  Thus $F_t$ is continuous with finite variation.
\end{proof}

The variation of the kernels is given by Hadamard's formula.

\par\nopagebreak[4]
\begin{lemma}
\label{lem:chordal-hadamard}
For $q<\lambda_1(D)$, while the points remain in the slit,
with $w\ne z$ in the first identity,
\begin{align}
 G_{D_s}^{(m)}(w,z)-G_{D_t}^{(m)}(w,z)
    &=8\pi\int_s^tP_u^{(m)}(w)P_u^{(m)}(z)du,
    \label{eq:chordal-hadamard}\\
 dk_t^{(m)}(w)&=-8\pi P_t^{(m)}(w)R_tdt.
    \label{eq:chordal-target-variation}
\end{align}
Under ordinary ${\rm SLE}_2$, almost surely,
\begin{equation}\label{eq:chordal-integrability}
 \int_0^\infty\left(\int_{D_t}P_t\right)^2dt<\infty,
 \qquad \int_0^\infty\int_{D_t}P_t^2\,dw\,dt<\infty.
\end{equation}
The first integral has a deterministic bound depending on $D$ and
its chart. On the diagonal, \eqref{eq:chordal-hadamard} means the
difference of finite parts, obtained by adding the common singularity
$(2/\pi)\log|w-z|$ to each Green kernel before letting $z\to w$.
\end{lemma}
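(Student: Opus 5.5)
The plan is to prove the three assertions of Lemma~\ref{lem:chordal-hadamard} separately: the boosted Hadamard formula \eqref{eq:chordal-hadamard} first, then the target variation \eqref{eq:chordal-target-variation} as a boundary limit of it, and finally the integrability bounds \eqref{eq:chordal-integrability}.

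\emph{Boosted Hadamard formula.} Start from the classical Loewner--Hadamard formula for the ordinary Green kernel. In our normalization $G_{D_t}(w,z)=\frac1\pi\log\bigl|\frac{f_t(w)-\overline{f_t(z)}}{f_t(w)-f_t(z)}\bigr|\cdot c$, with $c$ fixed by $k_t=4\Im f_t$. Differentiating with $\partial_t f_t=2/f_t-\dot\xi_t$ kills the $\dot\xi_t$ term, because the kernel depends only on differences. This gives
\[
 -\partial_tG_{D_t}(w,z)=8\pi P_t(w)P_t(z),
\]
where the constant $8\pi$ comes from matching $P_t=-\pi^{-1}\Im(1/f_t)$ with $\Im(1/f)\Im(1/\bar f)$. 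To pass to the boosted kernel, use the resolvent identity \eqref{eq:chordal-resolvent}, written as $G^{(m)}_{D_s}-G^{(m)}_{D_t}=(I+qG^{(m)}_{D_s})(G_{D_s}-G_{D_t})(I+qG^{(m)}_{D_t})$, a second-resolvent identity for nested domains with kernels extended by zero. Substitute the ordinary Hadamard formula into the middle factor. Then $(I+qG^{(m)})P_u$ is exactly $P^{(m)}_u$ by \eqref{eq:chordal-boosted-p}, up to replacing $D_s$ or $D_t$ by $D_u$. The cleanest way to avoid that mismatch is to differentiate at time $u$, where both outer factors become $I+qG^{(m)}_{D_u}$, and then integrate in $u$. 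Fubini is justified by positivity and Lemma~\ref{lem:chordal-gauge}, which gives $P^{(m)}_u\le C_mP_u$ uniformly in the slit. On the diagonal, the common logarithmic singularity is independent of $t$, so the finite parts satisfy the same identity.

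\emph{Target variation.} Divide \eqref{eq:chordal-hadamard} by $P_t(z)$ and let $z\to b$. Since $f_s$ and $f_t$ differ near $b$ by $O(1/z)$ corrections, $P_s(z)/P_t(z)\to1$ at the target. By the definition \eqref{eq:chordal-target-kernel}, $P^{(m)}_u(z)/P_u(z)\to R_u$. Dominated convergence, using \eqref{eq:chordal-boundary-comparison}, lets us interchange the limit with the $du$ integral, and differentiating in $t$ gives \eqref{eq:chordal-target-variation}.

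\emph{Integrability.} I expect \eqref{eq:chordal-integrability} to be the main obstacle.
\begin{itemize}
\item For the first integral, bound $\int_{D_t}P_t\,dw$ deterministically. By the conformal covariance of $P$ and the map to $\mathbb H$, it equals the expected Brownian occupation weight, namely $\int_{D_t}P_t=\frac{1}{4}\EE[\text{exit time from }D_t]$ under a harmonic measure normalized at the target. A cleaner route is $\int_{D_t}P_t(w)\,dw=\lim_{z\to b}(G_{D_t}1)(z)/P_t(z)$, which the $3G$ inequality \eqref{eq:chordal-3g} bounds by $CR^2\sup k$-type quantities. More simply, Hadamard monotonicity shows that $t\mapsto\int_{D_t}P_t$ is controlled by $\int_{D}P_0$ via \eqref{eq:chordal-target-variation} at $q=0$. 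Indeed, $\partial_t\int k_t\,dw=-8\pi\int P_t$, and $\int k_0<\infty$. Together with boundedness, this gives $\int_0^\infty(\int P_t)^2dt\le\sup_t\int P_t\cdot\int k_0/(8\pi)<\infty$, a deterministic bound.
\item For the second integral, integrate the diagonal Hadamard identity (finite parts) over $w\in D$. The quantity $\int_0^\infty\int P_t(w)^2\,dt$ is at most $(8\pi)^{-1}$ times the drop of the Robin-type finite part $\lim_{z\to w}\bigl(G_{D}(w,z)+\frac2\pi\log|w-z|\bigr)$ as $t\to\infty$. This drop is bounded by $C+\frac2\pi\log(1/\dist(w,K))$. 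Its integral over $D$ is finite almost surely because $K$ has dimension $5/4<2$, so $\int_D\log^+(1/\dist(w,K))\,dw<\infty$; this can be checked via the box counts $N_r(K)$ and Theorem~\ref{thm:covering-input}.
\end{itemize}
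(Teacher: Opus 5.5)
You have a genuine gap in the first bound of \eqref{eq:chordal-integrability}, which the statement says is deterministic. The other parts are sound. The nested-domain resolvent identity (valid because $I+qG^{(m)}_U=(I-qG_U)^{-1}$), differentiated at time $u$, is a legitimate alternative to the paper's route, which applies the product rule to each term of the Neumann series and sums by Tonelli. The paper gets the interchange from positivity; your version needs continuity in $u$ of $(I-qG_{D_u})^{-1}P_v$, which Lemma~\ref{lem:chordal-gauge} supports. Your target variation is the paper's argument. For \eqref{eq:chordal-integrability} itself the paper gives no proof and cites \cite[Corollary~4.6]{CW}.

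Your estimate $\int_0^\infty(\int_{D_t}P_t)^2dt\le\sup_t\int_{D_t}P_t\cdot\frac1{8\pi}\int_Dk_0$ is correct in its $L^1$ factor, since $k_t\ge0$ decreases at rate $8\pi P_t$. However, it needs $\sup_t\int_{D_t}P_t<\infty$, which you never prove, and the heuristics you offer are wrong.

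\begin{itemize}
\item By \eqref{eq:chordal-target-kernel} at $q=0$, $\lim_{z\to b}(G_{D_t}1)(z)/P_t(z)=\int_{D_t}k_t$, not $\int_{D_t}P_t$. The Martin limit that produces $\int_{D_t}P_t$ is taken at the tip: $\lim_{x\to a_t}(G_{D_t}1)(x)/k_t(x)$.
\item Hadamard gives monotonicity of $G_{D_t}$ and $k_t$, not of $P_t$, which grows near the tip.
\end{itemize}

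In fact no deterministic bound on $\sup_t\int_{D_t}P_t$ can hold. Suppose the tip sits at the dead end of a long channel of width $w$ formed by earlier parts of the curve. Then the $f_t$-image of a $w$-neighbourhood of the tip has size $\varepsilon$, which is the normalized harmonic measure from $b$ and can be made arbitrarily small. There $P_t\asymp\varepsilon^{-1}$ on an area $\asymp w^2$, so $\int_{D_t}P_t\gtrsim w^2/\varepsilon$ is unbounded. Your route therefore cannot give the deterministic bound, and not even almost-sure finiteness without further input.

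The missing idea is to expand the square and use Tonelli with the off-diagonal Hadamard identity at $q=0$. This is exactly how you used its diagonal version for the second integral:
\[
 \int_0^\infty\Bigl(\int_{D_t}P_t\Bigr)^2dt
 =\int_D\int_D\int_0^\infty P_t(w)P_t(z)\,dt\,dw\,dz
 \le\frac1{8\pi}\int_D\int_DG_D(w,z)\,dw\,dz<\infty .
\]

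Your argument for the second integral (diagonal Hadamard, the conformal-radius bound, and integrability of $\log^+(1/\dist(w,K))$) is correct. One input is misplaced: Theorem~\ref{thm:covering-input} concerns discrete radial LERW in the unit ball. For chordal ${\rm SLE}_2$ in $D$, use instead an ${\rm SLE}$ one-point estimate, or its upper Minkowski dimension $5/4<2$. Either gives $\int_D\log^+(1/\dist(w,K))\,dw<\infty$ almost surely.
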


\begin{proof}
We adapt the argument of~\cite[Lemma~4.7]{CW}, replacing
their increment domination by the positive Neumann series.
Extend all kernels by zero to $D$ and write $G_t^{[0]}=I$.
The ordinary Hadamard formula, in our normalization, gives $ G_s(w,z)-G_t(w,z)
 =8\pi\int_s^t P_u(w)P_u(z)\,du$.
Applying the product rule to finite convolution powers yields
\[
 \begin{aligned}
 G_s^{[n+1]}(w,z)-G_t^{[n+1]}(w,z)=8\pi\int_s^t\sum_{j=0}^n
 (G_u^{[j]}P_u)(w)
 (G_u^{[n-j]}P_u)(z)\,du.
 \end{aligned}
\]
All terms are nonnegative, so Tonelli's theorem permits
summation against $q^n$. The uniform Green comparison
\eqref{eq:chordal-green-comparison} ensures convergence,
while the resolvent expansion identifies
$\sum_{j\ge0}q^jG_u^{[j]}P_u=P_u^{(m)}$.
This proves \eqref{eq:chordal-hadamard}.

For the target variation, divide the integrated identity
by $P_s(z)$ and let $z\to b$. The hydrodynamic normalization
gives $P_u(z)/P_s(z)\to1$, while
$P_u^{(m)}\le C_mP_u$ and the ordinary Hadamard formula
justify passage through the time integral. Hence
\[
 k_s^{(m)}(w)-k_t^{(m)}(w)
 =8\pi\int_s^t P_u^{(m)}(w)R_u\,du,
\]
which gives \eqref{eq:chordal-target-variation}.
The diagonal identity follows by subtracting the common
logarithmic singularity. Finally, the integrability
statements are~\cite[Corollary~4.6]{CW}; the change of
Green normalization replaces their coefficient $2\pi$
by $8\pi$.
\end{proof}

\subsection{Identification of the drift}
\label{subsec:chordal-drift}

\begin{proof}[Proof of the drift formula in Theorem~\ref{thm:chordal-driving}]
Apply~\cite[Lemmas~4.8--4.9 and equation~(4.7)]{CW} with the same
replacements, using~\eqref{eq:chordal-boundary-comparison} and
$|Q_t|k_t^{(m)}\le4C_mP_t$ for domination. Under the ordinary law this
gives, with $I_t=\int_{D_t}P_tP_t^{(m)}\,dw$,
\begin{equation}\label{eq:chordal-dr}
 dR_t=\sqrt2q\left(\int_{D_t}Q_tk_t^{(m)}\,dw\right)dB_t
           -8\pi qR_tI_t\,dt,
\end{equation}
and the boundary identity
\begin{equation}\label{eq:chordal-lambda}
 \frac{Q_t^{(m)}(b)}{P_t^{(m)}(b)}
 :=\lim_{z\to b}\frac{Q_t^{(m)}(z)}{P_t^{(m)}(z)}
 =\frac q{R_t}\int_{D_t}Q_tk_t^{(m)}\,dw=:\lambda_t.
\end{equation}
Since $e^{F_t}R_t/R_0$ is a martingale and $F_t$ has finite variation,
its finite-variation terms cancel. Its stochastic logarithm is
$\sqrt2\lambda_t\,dB_t$, and Girsanov gives~\eqref{eq:chordal-driving}.
\end{proof}

With the domain and chart fixed,
$\left.\partial_x\left[-\pi^{-1}\Im\left([g_t(\phi(z))-x]^{-1}\right)\right]
\right|_{x=\xi_t}=Q_t(z)$.
This explains the source-derivative notation of~\cite{MS,CW};
\eqref{eq:chordal-lambda} is the precise drift used here.

As in~\cite[Remark~4.10]{CW}, the bound
$|\lambda_t|\le4qC_m\int_{D_t}P_t$ and
\eqref{eq:chordal-integrability} imply Novikov's condition and weak uniqueness among
driving laws locally equivalent to that of ordinary
chordal ${\rm SLE}_2$. These conclusions hold throughout
$q<\lambda_1(D)$.

\section{Sharp box-counting estimates}
\label{sec:sharp-box-counting}\label{sec:surviving-bridges}
\label{sec:planar-covering}\label{sec:sharp-local}
\label{sec:three-d-covering}\label{sec:ordered-boxes}\label{sec:geometry}

In this section we prove Theorem~\ref{thm:covering-input}. The argument mimics the moment
method of Barlow--Masson~\cite[Section~5]{BM} in dimension two and of
Shiraishi~\cite[Section~8]{Shiraishi} in dimension three, with points
replaced by boxes. Two steps need a modification: the decomposition
at the ordered entrances into the boxes, where visits that are later
erased must be discarded, and the estimate at a single box, where the
sharp hitting exponents enter. We state these two steps precisely and
only indicate the remaining, standard, arguments.

Throughout this section we work on $\ZZ^d$ with unit mesh. Let
$D_N=B(0,N)\cap\ZZ^d$, let $S$ be simple random walk started at $0$,
and let $\gamma_N=\LE(S[0,T_{D_N}])$, including the exit vertex. For
$1\le L\le N$, let $\mathcal Q_L$ be a fixed translate of the grid of
cubes of side $L$, and put
\[
 M_L=\#\{Q\in\mathcal Q_L:Q\cap\ran(\gamma_N)\ne\varnothing\}.
\]
Recall \eqref{eq:b23} for the definition of $b_d$ and set $\theta_d=2-b_d$.
\begin{theorem}\label{thm:sharp-fixed-scale}\label{thm:planar-cover-exp}
\label{prop:r3-fixed-scale}
There are constants $c,C>0$, depending only on $d$, such that,
uniformly in the translation of the grid,
\begin{equation}\label{eq:sharp-fixed-scale}
 \PP\{M_L>u(N/L)^{b_d}\}\le Ce^{-cu},
 \qquad 1\le L\le N,\quad u\ge1.
\end{equation}
\end{theorem}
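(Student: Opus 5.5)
We plan to prove Theorem~\ref{thm:sharp-fixed-scale} by the moment method: we show a factorial moment bound
\[
 \EE\binom{M_L}{k}\le C^k\,(N/L)^{k b_d},\qquad k\ge1,
\]
uniformly in the grid translation. Since $\EE e^{s M_L(L/N)^{b_d}}$ is controlled by $\sum_k s^k\EE[M_L^k](L/N)^{kb_d}/k!$, this gives an exponential moment for a small fixed $s$. Markov's inequality then yields \eqref{eq:sharp-fixed-scale}. Following Barlow--Masson and Shiraishi, we reduce the $k$-th moment to a sum over ordered $k$-tuples of distinct boxes $Q_1,\dots,Q_k\in\mathcal Q_L$ hit by $\gamma_N$. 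We order them by their first entrance along the final LERW, i.e.\ by $j_{Q_1}<\dots<j_{Q_k}$ in the notation \eqref{eq:surviving-first-entrance}. The $k!$ orderings cancel against $\binom{M_L}{k}$.

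The first key step is the decomposition. At each box we mark $t_{Q_i}$, the last visit of the walk to the surviving vertex $y_{Q_i}$. Splitting the walk path at $t_{Q_1}<\dots<t_{Q_k}$ writes the event as a sum over $y_i\in Q_i$ of products of path weights. Each piece is a walk from $y_{i-1}$ to $y_i$, with the constraint that the loop erasure of everything up to $t_{Q_i}$ avoids $Q_i$ before $y_i$. Dropping all other non-intersection constraints except this local one bounds the probability by a product of Green-bridge factors. Each factor is $G_{D_N}(y_{i-1},y_i)$ times the probability that the LERW of the incoming arm enters $Q_i$ at $y_i$ without previously meeting $Q_i$, jointly with the outgoing walk avoiding that arm near $Q_i$. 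This is the step where we expect the main obstacle. We must make sure that erased visits do not break the factorization, and that the retained condition is exactly the one to which the sharp one-box hitting estimate applies, namely \cite{LV} in $d=2$ and \cite{LS18,HLS} in $d=3$.

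The second step is the single-box estimate. Summed over $y_i\in Q_i$, this local factor should be at most $C\,L^{2}\,\mathrm{dist}^{-d+2}\cdot (L/\mathrm{dist})^{\theta_d}$ (suitably interpreted, with $\mathrm{dist}$ the distance to the previous box or the root, and $\theta_d=2-b_d$). In other words, it is the probability that LERW hits a box of side $L$ at distance $r$, namely $\asymp (L/r)^{2-b_d}$ in the one-point sense. We would derive it by separating scales: inside $B(y_i,\mathrm{dist}/2)$ use the escape/hitting exponent for LERW from \cite{LV,LS18}, and outside use Green-function bounds, with quasi-independence (separation lemmas of Shiraishi / \cite{BM}) gluing the two.

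The final step is the standard summation. We bound
\[
\sum_{Q_1,\dots,Q_k}\prod_i (L/|Q_i-Q_{\sigma(i)}|)^{2-b_d}
\]
with $Q_{\sigma(i)}$ the nearest previously chosen box, via the tree-sum argument of \cite[Section~5]{BM}. Each new box contributes $\sum_{Q}(L/|Q-Q'|)^{2-b_d}\le C (N/L)^{b_d}$, since $\#\{Q:|Q-Q'|\asymp 2^jL\}\asymp 2^{jd}$. In $d=2$ the planar boundary factors need care, handled by the boundary Beurling estimate as in Lemma~\ref{lem:planar-free-natural}. This gives the moment bound with $C^k k!$ and hence the claim; uniformity in the translation holds because every estimate used is translation-invariant up to constants.
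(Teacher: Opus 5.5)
Your architecture matches the paper's: factorial moments over path-ordered tuples, marks at the last visit to the surviving first-entrance vertex, retention of the first-entrance and local non-intersection conditions, a sharp one-box estimate, and a Barlow--Masson chain sum. The quantitative bookkeeping, however, fails in $d=3$.

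The probability that LERW hits a box of side $L$ at distance $r$ is $(L/r)^{d-b_d}$, not $(L/r)^{2-b_d}$. The exponent $\theta_d=2-b_d$ belongs to the local two-arm sum once the Green factor has been separated off: $\sum_{y\in\inn Q}q_Q(y,r)\le CL^{d-2}(L/r)^{\theta_d}$, so your prefactor $L^2$ should be $L^{d-2}$. Your final inequality $\sum_Q(L/|Q-Q'|)^{2-b_d}\le C(N/L)^{b_d}$ is false for $d=3$. With $\asymp 2^{3j}$ boxes at distance $2^jL$, the sum is $\asymp(N/L)^{1+\beta}$. You must keep $G_{D_N}(y_{i-1},y_i)\asymp|q_i-q_{i-1}|^{-1}$ in the product. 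After rescaling by $L$, the $i$th box then carries $(1+|x_i-x_{i-1}|)^{-1}d_i^{-\theta_3}$ and contributes $C(N/L)^{2-\theta_3}=C(N/L)^{\beta}$. In $d=2$ the two exponents agree, and the logarithmic Green factor only costs a constant.

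Two ideas are also missing.

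First, dropping constraints does not by itself give a product over marks. The bridge $Z^{i-1}$ supplies both the raw outgoing arm at $y_{i-1}$ and, through its erasure, the incoming arm at $y_i$. Moreover, the terminal part of $\LE(Z^{i-1})$ is not a LERW started from $y_i$. The paper reverses the bridges with Lawler's weight-preserving bijection, $\LE(\mathcal R\omega)=(\LE(\omega^R))^R$. It then decouples the two ends by Lemma~\ref{lem:bridge-two-end}: Harnack for a Doob transform at one end, Masson's comparison with infinite LERW at the other. That lemma needs both end balls disjoint and inside $D_N$. Hence the local scale is $\rho_i=\min\{\dist(q_i,\partial B(0,N)),|q_i-q_{i-1}|,|q_i-q_{i+1}|\}$, which involves the next box and the boundary, not only the previous box. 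It also needs $\rho_i\ge A_0L$. This is why the cubes are split into boundedly many $A_0L$-separated families, and the boundary layer is counted separately by Lemma~\ref{lem:r3-boundary-count}. Consecutive factors are therefore coupled, and the spatial sum must run along the chain. It uses $d_i^{-\theta}\le s_i^{-\theta}+s_{i+1}^{-\theta}+h_R(x_i)^{-\theta}$ together with $\theta_d<1$ (Lemma~\ref{lem:chain-kernel}). A sum over the nearest previously chosen box does not match this bound. Done by a union over which earlier box is nearest, it also loses a factor growing with $k$, which spoils the exponential tail.

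Second, for the one-box sum, separating scales around each $y$ only yields the one-point escape probability $r^{-\theta_d}$. Summing that over the $\asymp L^{d-1}$ vertices of $\inn Q$ overshoots by $L^{b_d-1}$. The paper instead proves $q_Q(y,r)\le Cr^{d-2}\PP(\mathcal H_y)$, where $\mathcal H_y$ is the event that $y$ is the first vertex of $Q$ on a reference LERW $\eta$. This uses separation estimates that tolerate the extra requirement of avoiding $Q$. The events $\mathcal H_y$ are disjoint, so the sum is at most $Cr^{d-2}\PP(\eta\cap Q\ne\varnothing)$. Only at this point do the sharp hitting estimates of \cite{LV} and \cite{HLS} enter.
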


Rescaling by $N^{-1}$ gives the fixed-scale bound
\eqref{eq:sharp-fixed-scale-input} with $\delta=N^{-1}$ and $r=L/N$.
Indeed, a cube meeting the interpolated trace lies within distance
$\delta\le r$ of a vertex of $\gamma^\delta$, so $N_r(K^\delta)$ is at
most $3^d$ times the number of cubes containing a vertex of
$\gamma^\delta$.

\subsection{Reduction to ordered box entrances}
\label{sec:ordered-visits}\label{sec:box-outline}

Fix a family $\mathcal F\subset\mathcal Q_L$ of pairwise disjoint
cubes. The cubes of $\mathcal F$ visited by $\gamma_N$ have a
unique order of first entrance. For distinct cubes
$Q_1,\ldots,Q_k\in\mathcal F$, write
$\{Q_1\to\cdots\to Q_k\}$ for the event that $\gamma_N$ hits all
of them and first enters them in this order. If $X$ is the number
of cubes of $\mathcal F$ hit by $\gamma_N$, then
\begin{equation}\label{eq:os-factorial-moment-expansion}
 \EE\binom Xk
 =\sum_{\substack{Q_1,\ldots,Q_k\in\mathcal F\\ \text{distinct}}}
 \PP\{Q_1\to\cdots\to Q_k\}.
\end{equation}
If $\EE\binom Xk\le B^k$ for all $k\ge1$ and some $B\ge1$, then
$\EE(1+(2B)^{-1})^X\le2$, and since $\log(1+t)\ge t/2$ on $[0,1]$,
$\EE e^{X/(4B)}\le2$. It therefore suffices to prove
\begin{equation}\label{eq:box-outline-moments}
 \EE\binom Xk\le[C(N/L)^{b_d}]^k,\qquad k\ge1,
\end{equation}
for each of a bounded number of families $\mathcal F$, which together
cover all cubes except those treated separately below.

The families are chosen as follows. Fix a large constant $A_0$. If
$N/L\le A_0$, the theorem is trivial, since the number of cubes is
bounded. Cubes within distance $A_0L$ of $0$ are $O(1)$ in number.
Cubes within distance $A_0L$ of $\partial B(0,N)$ are counted by
Lemma~\ref{lem:r3-boundary-count} below. The remaining cubes are
split into a bounded number of families $\mathcal F$ such that the
centers of any two distinct cubes in one family are at distance at
least $A_0L$.

\begin{lemma}\label{lem:r3-boundary-count}
Let $M_L^\partial$ be the number of cubes meeting both $\ran(\gamma_N)$
and the $A_0L$-neighborhood of $\partial B(0,N)$. Then
\begin{equation}\label{eq:sharp-boundary-tail}
 \PP(M_L^\partial>u)\le Ce^{-cu},\qquad u\ge1,
\end{equation}
uniformly in $1\le L\le N$ and in the translation of the grid.
\end{lemma}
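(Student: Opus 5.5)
We bound the factorial moments of $M_L^\partial$ geometrically, $\EE\binom{M_L^\partial}{k}\le B^k$ with $B$ independent of $N$, $L$ and the grid translation. The exponential tail then follows exactly as in the reduction after \eqref{eq:os-factorial-moment-expansion}: $\EE e^{M_L^\partial/(4B)}\le 2$, and Markov's inequality gives \eqref{eq:sharp-boundary-tail}.

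\emph{Step 1: boundary shells.} Partition the $A_0L$-neighborhood of $\partial B(0,N)$ into spherical shells at distances $[2^{j-1}L,2^jL)$ from the boundary, $0\le j\le \log_2 A_0+1$; there are boundedly many. Further cover each shell by $O(1)$ subfamilies $\mathcal F$ of cubes whose centers are pairwise at distance at least $A_0L$. The key point is that the LERW exits the ball at a single point. Before its final approach to the boundary it must come within distance $O(L)$ of $\partial B(0,N)$ each time it meets such a cube. Each such approach then has probability bounded away from $1$ of leading to exit without first returning to distance $\gg L$ from the boundary, by the gambler's-ruin / Beurling-type estimate for the walk near a smooth boundary. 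So the number of well-separated boundary cubes visited should behave like a geometric variable.

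\emph{Step 2: ordered entrances.} Concretely, use \eqref{eq:os-factorial-moment-expansion} for a family $\mathcal F$ of separated boundary cubes. Decompose at the surviving first entrance $t_{Q_1}$ (marking the last visit to $y_{Q_1}$ as in \eqref{eq:surviving-first-entrance}), and bound $\PP\{Q_1\to\cdots\to Q_k\}$ by a product of walk probabilities. This gives
\[
\PP\{Q_1\to\cdots\to Q_k\}\le \PP^0(\tau_{Q_1}<T_{D_N})\prod_{i<k}\max_{x\in Q_i}\PP^x(\tau_{Q_{i+1}}<T_{D_N}),
\]
after dropping the loop-erasure constraints, which only decrease probabilities. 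For $x$ within $O(L)$ of the boundary, the standard estimate (Poisson kernel of the half-space, or Lemma on harmonic measure near a smooth boundary) gives
\[
\PP^x(\tau_Q<T_{D_N})\le C\,\bigl(L/\dist(x,Q)\bigr)^{d}.
\]
This is the boundary-to-boundary Green function times the cube's boundary-layer capacity. Summing over $Q$ in the $A_0L$-strip at pairwise separation $\ge A_0L$ yields
\[
\sum_{Q}(L/\dist(x,Q))^{d}\le C A_0^{-1}\sum_{n\ge1} n^{d-2}n^{-d}\le C'.
\]
Here the strip has dimension $d-1$ and we count cubes at distance $\asymp nL$. We also have $\sum_Q\PP^0(\tau_Q<T_{D_N})\le C$, since the total harmonic measure of the strip cubes is bounded. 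Iterating the sum gives $\EE\binom{M}{k}\le C^k$.

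\emph{Main obstacle.} The delicate point is uniformity of the boundary hitting estimate down to scale $L$ comparable to the mesh and for cubes straddling $\partial B(0,N)$; the discrete boundary is rough at scale $1$. I would handle the case $L\le A_0$ separately by comparison with the half-space walk. For $L$ large I would use the invariance principle together with boundary regularity of the sphere, as in the proof of Lemma~\ref{lem:subcritical-exit-moments}. Alternatively, for small $L$ one can use the crude bound $(L/r)^{d-1}\cdot(L/r)$, obtained by combining the exit-before-distance-$r$ estimate with the probability of hitting the cube.
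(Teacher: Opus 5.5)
Your outline is workable, but it takes a much heavier route than the paper. The paper does no moment computation. It counts the cubes met by the raw walk $S[0,T_{D_N}]$. Whenever the walk visits the $(A_0+2\sqrt d)L$-neighborhood of the sphere outside the balls selected so far, the paper selects the ball of radius $AL$ around the current position. Uniformly in the past, the walk exits $D_N$ before leaving that ball with probability at least $p>0$, since the ball contains a ball of radius $cL$ outside $B(0,N)$. So the number of selected balls is dominated by a geometric variable, and each selected ball meets at most $C_A$ cubes. This is exactly your Step~1 heuristic made rigorous, and it makes Step~2 unnecessary.

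The paper's argument needs only a uniformly positive exit probability at one comparable scale, which is trivial down to $L=1$. Your route instead needs $\PP^x(\tau_Q\le T_{D_N})\le C(L/\dist(x,Q))^d$ uniformly for all ratios $\dist(x,Q)/L$ up to $N/L$. Because the strip is $(d-1)$-dimensional, exponent $d-1$ would already give a sum of order $\log(N/L)$, so no crude bound will do. An invariance principle controls only fixed macroscopic configurations, and iterating it over scales gives only a small exponent. Your factorization into a gambler's-ruin factor $L/r$ and a far-field factor $(L/r)^{d-1}$ is the right mechanism. However, both factors need scale-invariant boundary estimates for the lattice ball, and that is where all the work of your proof would lie. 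What you gain is explicit factorial-moment bounds, which the lemma does not require.

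Also, the product inequality in Step~2 does not come from the surviving-entrance decomposition. Dropping $\mathcal H\cap\mathcal V$ in \eqref{eq:os-exact-ordered-expansion} leaves sums $\sum_{y\in\inn Q}G_{D_N}(x,y)$, which are expected numbers of visits to $\inn Q$. For cubes at distance of order $L$ from the sphere these exceed the hitting probabilities by a factor of order $L$, giving only $(CL)^k$. The inequality is nevertheless true by a simpler argument. On $\{Q_1\to\cdots\to Q_k\}$ the retained vertices $y_{Q_i}$ have increasing last-visit times, so the raw walk visits $Q_1,\ldots,Q_k$ in this order. The strong Markov property at the successive first hitting times then gives the product. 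The last factor should use $\le T_{D_N}$, since the last cube may be met only at the exit vertex.
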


\begin{proof}
We count the cubes met by $S[0,T_{D_N}]$. Whenever $S$ visits the
$(A_0+2\sqrt d)L$-neighborhood of the sphere outside the balls selected
so far, select the ball of radius $AL$ around its current position,
where $A$ is a large constant. Uniformly in the past, the walk exits
$D_N$ before leaving this ball with probability at least $p>0$, since
the ball contains a ball of radius $cL$ outside $B(0,N)$ within
distance $AL/2$ of its center. By the strong Markov property the
number of selected balls has a geometric tail, and each selected ball
meets at most $C_A$ cubes.
\end{proof}

\subsection{The decomposition at surviving entrances}
\label{sec:exact-expansion}

The first visit of the walk to a cube may be erased by a later loop. We therefore mark the last visit to the first vertex of the cube retained by the final loop erasure,
 that is, the index $t_Q$ at the vertex
$y_Q$ of \eqref{eq:surviving-first-entrance}. For a finite set
$U\subset\ZZ^d$ and $x,y\in U$, the \emph{Green bridge} measure is
\begin{equation}\label{eq:greenbridge}
 \br_U^{x,y}(\omega)=\frac{p(\omega)}{G_U(x,y)}
\end{equation}
on finite nearest-neighbor paths $\omega$ from $x$ to $y$ in $U$; by
\eqref{eq:ordinary-green} it is a probability measure.

\begin{lemma}\label{lem:surviving-entrance}
Let $Q\subset U$, $x\in U\setminus Q$ and $y\in Q$. For a path
$S[0,T_U]$ from $x$ that visits $y$, let $t$ be its last visit to $y$
and $\alpha=\LE(S[0,t])$. Then $y$ is the first vertex of $Q$ on
$\LE(S[0,T_U])$ if and only if
\begin{equation}\label{eq:survival}
 \alpha\cap Q=\{y\},\qquad \alpha\cap S[t+1,T_U]=\varnothing.
\end{equation}
Consequently, if $Z\sim\br_U^{x,y}$ and $\widetilde S$ is an
independent walk from $y$ stopped on leaving $U$, then
\begin{equation}\label{eq:singlemark}
 \PP^x\{y_Q=y\}=G_U(x,y)\,
 \PP\bigl\{\LE(Z)\cap Q=\{y\},\
 \LE(Z)\cap\widetilde S[1,\widetilde T_U]=\varnothing\bigr\}.
\end{equation}
\end{lemma}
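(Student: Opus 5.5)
The plan has two parts. First, a deterministic characterization of when $y$ is the first vertex of $Q$ on the loop erasure, using the last-visit description of chronological loop erasure in \eqref{eq:chronological-indices}. Second, a path decomposition at the last visit to $y$ that converts this characterization into \eqref{eq:singlemark}.

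\emph{The equivalence.} Let $t$ be the last visit to $y$ and $\alpha=\LE(S[0,t])$, which is a path from $x$ to $y$. The key fact we will use is the standard splitting: since $y$ is never revisited after $t$,
\[
\LE(S[0,T_U])=\LE\bigl(\LE(S[0,t])\oplus S[t,T_U]\bigr),
\]
and the loop erasure of $S[0,T_U]$ contains $y$ if and only if $S[t+1,T_U]$ avoids $\alpha$. Here is why this holds. Every vertex of $\LE(S[0,T_U])$ up to $y$ is obtained from the indices $s_k$. The vertex $y$ survives exactly when no later visit to an earlier vertex $\alpha_i$ of $\alpha$ occurs. Such a visit would force a loop erasure that removes $y$, because the last visit to $\alpha_i$ would then exceed $t$. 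Conversely, if $S[t+1,T_U]$ avoids $\alpha$, then the $s_k$ for vertices of $\alpha$ are all at most $t$, and the retained path up to $y$ is exactly $\alpha$. Hence, under the second condition in \eqref{eq:survival}, the initial segment of $\LE(S[0,T_U])$ ending at $y$ equals $\alpha$. So $y$ is the first vertex of $Q$ on it if and only if $\alpha\cap Q=\{y\}$.

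For the converse direction, suppose $y=y_Q$. Then $y$ is on the final loop erasure, which forces $S[t+1,T_U]\cap\alpha=\varnothing$. The retained prefix is then $\alpha$, and it meets $Q$ only at its end. I would check the indexing carefully: the time-$t$ position itself is $y\in\alpha$, which is why the condition uses $S[t+1,\cdot]$. I would also check that if $y$ lies on the final path, its last visit index $s_{j}$ equals $t$.

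\emph{The formula.} Decompose paths $\omega$ from $x$ exiting $U$ that visit $y$ uniquely as $\omega=\omega^1\oplus\omega^2$. Here $\omega^1$ runs from $x$ to $y$ inside $U$ and ends at the last visit to $y$, and $\omega^2$ runs from $y$ to the exit with no return to $y$. This gives
\[
\PP^x\{y_Q=y\}=\sum_{\omega^1,\omega^2}p(\omega^1)p(\omega^2)\ind\{\cdots\}.
\]
Now rewrite the sum over $\omega^1$ as $G_U(x,y)$ times an expectation under the Green bridge $\br_U^{x,y}$ from \eqref{eq:greenbridge}.

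For $\omega^2$, the sum of $p(\omega^2)$ over paths avoiding $y$ after time $0$ is the probability for $\widetilde S$ from $y$ that $\widetilde S[1,\widetilde T_U]$ avoids $y$. Since $y\in\LE(Z)$, the event $\LE(Z)\cap\widetilde S[1,\widetilde T_U]=\varnothing$ already contains this no-return condition. So the restriction on $\omega^2$ is absorbed into the event, and the two factors combine to give \eqref{eq:singlemark}.

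This last point is the only subtle step. A naive decomposition would leave an extra escape probability, and one has to observe that the no-return constraint is implied by the event; this is also where the forbidden-return convention after \eqref{eq:ordinary-dmp} enters. Everything else is a routine bijection of finite paths, so no stopping-time argument is needed.
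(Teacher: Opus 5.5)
Your proposal is correct and follows essentially the paper's proof. For the equivalence, any return of $S[t+1,T_U]$ to $\alpha$ must hit a vertex other than $y$ and so erase $y$, while otherwise $\LE(S[0,T_U])=\alpha\oplus\LE(S[t,T_U])$. For the formula, you split at the last visit to $y$ and observe, as the paper does, that since $y\in\LE(Z)$ the avoidance condition already forces the concatenation point to be that last visit. The weights then factor into $G_U(x,y)$ times the Green bridge and an unrestricted walk from $y$, so the appeal to the forbidden-return convention after \eqref{eq:ordinary-dmp} is unnecessary.
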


\begin{proof}
Write $\alpha=(a_0,\ldots,a_l)$ with $a_l=y$. If $S[t+1,T_U]$ hits
$\alpha$, its first such visit is at some $a_j$ with $j<l$, because
$t$ is the last visit to $y$; this visit erases $y$, which is never
visited again. If the intersection is empty, then
$\LE(S[0,T_U])=\alpha\oplus\LE(S[t,T_U])$. This proves
\eqref{eq:survival}.
Splitting at the last visit to \(y\) gives a bijection between paths with \(y_Q=y\) and pairs of paths satisfying the two conditions in \eqref{eq:singlemark}. Indeed, the second condition ensures that the concatenation point is the last visit to \(y\). Since path weights factor under concatenation, \eqref{eq:singlemark} follows from the definition of the Green bridge.
\end{proof}

Iterating Lemma~\ref{lem:surviving-entrance} at the marked vertices
gives an exact expansion for several boxes. This is the multiple-mark
decomposition of~\cite[Proposition~5.2]{BM}, in which we retain the
first-entrance condition. Fix pairwise disjoint cubes $Q_1,\ldots,Q_k$, none of which
contains $0$, put $y_0=0$, and fix $y_i\in\inn Q_i$. Let
$Z^j\sim\br_{D_N}^{y_j,y_{j+1}}$ for $0\le j<k$, and let $Z^k$ be a walk
from $y_k$ stopped on leaving $D_N$, all independent. Write
$n_j=|Z^j|$ for $j\leq k$ and $\gamma^j=\LE(Z^j)$ for $j< k$, and define
\begin{align}
 \mathcal H&=\bigcap_{i=1}^k\Bigl\{\gamma^{i-1}\cap
   \bigcup_{j=i}^kZ^j[1,n_j]=\varnothing\Bigr\},
 \label{eq:os-bridge-survival}\\
 \mathcal V&=\bigcap_{i=1}^k\bigl\{(\gamma^0\oplus\cdots\oplus
   \gamma^{i-1})\cap Q_i=\{y_i\}\bigr\}.
 \label{eq:os-bridge-first-entrance}
\end{align}

\begin{lemma}\label{lem:ordered-expansion}
With this notation,
\begin{equation}\label{eq:os-exact-ordered-expansion}
 \PP\{Q_1\to\cdots\to Q_k\}
 =\sum_{\substack{y_i\in\inn Q_i\\1\le i\le k}}
  \Bigl[\prod_{i=1}^kG_{D_N}(y_{i-1},y_i)\Bigr]
  \PP(\mathcal H\cap\mathcal V).
\end{equation}
On $\mathcal H\cap\mathcal V$, for every $1\le i\le k$,
\begin{equation}\label{eq:os-incoming-target-avoidance}
 \gamma^{i-1}\cap Q_i=\{y_i\},\qquad
 \gamma^{i-1}\cap Z^i[1,n_i]=\varnothing.
\end{equation}
\end{lemma}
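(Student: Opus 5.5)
We prove \eqref{eq:os-exact-ordered-expansion} by iterating the single-mark argument of Lemma~\ref{lem:surviving-entrance} along the marked vertices $y_1,\ldots,y_k$. Since the cubes are disjoint and the order of first entrance is unique, the event $\{Q_1\to\cdots\to Q_k\}$ is the disjoint union over $(y_1,\ldots,y_k)$ with $y_i\in\inn Q_i$ of the events that $y_{Q_i}=y_i$ for all $i$ and the $y_i$ occur on $\gamma_N$ in this order. The first vertex of a cube on a nearest-neighbour path entering it from outside lies in $\inn Q_i$; since $0\notin Q_i$, this holds. Let $t_1<\cdots<t_k$ be the last-visit indices $t_{Q_i}$. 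Because the $y_i$ appear in order on the loop erasure, the relation $s_{j}$ increasing in \eqref{eq:chronological-indices} shows that the corresponding $t_i$ are increasing. We cut $S[0,T_{D_N}]$ at these indices into pieces $\omega^0=S[0,t_1]$, $\omega^j=S[t_j,t_{j+1}]$ and $\omega^k=S[t_k,T_{D_N}]$.

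The key step is a deterministic characterization, proved by induction on $i$ exactly as in \eqref{eq:survival}. By chronological loop erasure,
\[
\LE(S[0,t_i])=\LE(\omega^0)\oplus\cdots\oplus\LE(\omega^{i-1})
\]
holds precisely when each $\LE(\omega^{j-1})$ avoids $\omega^{j}[1,\cdot]$, and more generally avoids all later pieces up to $t_i$. Further, $y_i$ survives to $\gamma_N$ with $t_i$ its last visit iff $\LE(S[0,t_i])$ is not hit by $S[t_i+1,T_{D_N}]$. Combining these over all $i$, the condition is that each $\gamma^{i-1}$ avoids $\bigcup_{j\ge i}Z^j[1,n_j]$, which is $\mathcal H$. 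Here $\gamma^{i-1}$ already excludes $y_i$ collisions at time $0$, since the piece starts at $y_i$ and only positive times count. On $\mathcal H$, the first-entrance condition for $Q_i$ reads: the loop erasure up to $y_i$, namely $\gamma^0\oplus\cdots\oplus\gamma^{i-1}$, meets $Q_i$ only at $y_i$. This is $\mathcal V$, and it automatically enforces the order of first entrances. Conversely, given pieces satisfying $\mathcal H\cap\mathcal V$, their concatenation has $t_i$ as the last visit to $y_i$. Indeed, $\mathcal H$ forbids later pieces from returning to $y_i$, which lies on $\gamma^{i-1}$ as its endpoint, except at time $0$. This makes the cutting a bijection.

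Path weights factor under concatenation. Summing the weights of $\omega^{j}$ from $y_j$ to $y_{j+1}$ in $D_N$ gives $G_{D_N}(y_j,y_{j+1})$ times the Green bridge law $\br^{y_j,y_{j+1}}_{D_N}$, and the final piece is a walk from $y_k$ killed on exit. This yields \eqref{eq:os-exact-ordered-expansion}. The consequences \eqref{eq:os-incoming-target-avoidance} are then immediate: the first follows from $\mathcal V$ with index $i$, since $\gamma^{i-1}$ is a subpath of the concatenation, and the second is the $j=i$ term in $\mathcal H$.

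The main obstacle is the inductive verification that the concatenation identity for loop erasures holds simultaneously at all marks. In particular, we must check that $\mathcal H$ with only the pairwise condition on consecutive erased pieces, rather than on the full concatenated erasure, suffices. Earlier erased pieces $\gamma^{j}$ with $j<i-1$ must not be hit by $Z^{i}$, and this is included since $\mathcal H$ at index $j+1$ ranges over all later pieces. I would write this induction carefully, using that the loop erasure of the concatenation is the concatenation of the erasures exactly when no later piece intersects any earlier erased piece.
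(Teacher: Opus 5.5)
Your proposal is correct and is essentially the paper's proof: you split the walk at the increasing last visits $t_1<\cdots<t_k$ to the surviving entrance vertices, use \eqref{eq:survival} at each mark to get $\LE(S[0,t_i])=\gamma^0\oplus\cdots\oplus\gamma^{i-1}$ and hence $\mathcal H\cap\mathcal V$, invert by concatenation (with $\mathcal H$ ensuring the $t_i$ are the last visits), and factor the path weights into Green functions times bridge laws. One small point: your ``precisely when'' for the concatenation identity holds only because the cuts are at last visits to the endpoints, but you only use its sufficiency direction (later pieces avoiding earlier erasures implies the erasures concatenate), with necessity supplied by \eqref{eq:survival}, so the induction you flag as the main obstacle reduces to the short argument you already sketch.
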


\begin{proof}
Since $0\notin Q_i$, the first vertex $y_{Q_i}$ of $Q_i$ on $\gamma_N$
lies in $\inn Q_i$. On $\{Q_1\to\cdots\to Q_k\}$, the last visits
$t_1,\ldots,t_k$ of the walk to $y_{Q_1},\ldots,y_{Q_k}$ increase,
because these vertices appear in this order on $\gamma_N$. Split the
walk at $t_1<\cdots<t_k$. By \eqref{eq:survival} at each $t_i$, the
walk after $t_i$ avoids $\LE(S[0,t_i])$, so
$\LE(S[0,t_i])=\gamma^0\oplus\cdots\oplus\gamma^{i-1}$, and the pieces
satisfy $\mathcal H\cap\mathcal V$. Conversely, pieces satisfying
$\mathcal H\cap\mathcal V$ concatenate to a walk in the event, with
these marks. The splitting is unique, and the path weights factor as
in the proof of Lemma~\ref{lem:surviving-entrance}.
\end{proof}

The first condition in \eqref{eq:os-incoming-target-avoidance} is the
reason for marking surviving entrances. Without it, the two-arm
probability at a single vertex at scale $r$ is of order $r^{-\theta_d}$,
and summing over the order $L^{d-1}$ possible entrance vertices exceeds
the bound of Lemma~\ref{lem:sharp-inner-flux} below by the factor
$L^{b_d-1}$. The resulting estimate would not be uniform in the mesh.

\subsection{The estimate at a single box}
\label{sec:one-box}

For a cube $Q$, a vertex $y\in\inn Q$ and $r\ge1$, let $\gamma^\infty$ be
an infinite LERW and $S$ an independent simple random walk, both
started at $y$, and let $j_r$ and $\tau_r$ be their first exit indices
from $B(y,r)$. Set
\begin{equation}\label{eq:sharp-local-q}
 q_Q(y,r)=\PP(A_r^Q),\qquad
 A_r^Q=\bigl\{\gamma^\infty[1,j_r]\cap Q=\varnothing,\
 S[1,\tau_r]\cap\gamma^\infty[0,j_r]=\varnothing\bigr\}.
\end{equation}
For $Q=\{y\}$ this is, up to constants, the non-intersection
probability $\mathrm{Es}(r)$ of~\cite{BM,Shiraishi}; in dimension three,
$\mathrm{Es}(n)=cn^{\beta-2}[1+O(n^{-\kappa})]$ for every $n\ge1$
by~\cite[Theorem~4.3]{HLS}.

\begin{lemma}\label{lem:sharp-inner-flux}
There are constants $C_0,C<\infty$, depending only on $d$, such that
for every cube $Q$ of side $L\ge1$ and every $r\ge C_0L$,
\begin{equation}\label{eq:sharp-inner-flux}
 \sum_{y\in\inn Q}q_Q(y,r)\le CL^{d-2}\Bigl(\frac Lr\Bigr)^{\theta_d}.
\end{equation}
\end{lemma}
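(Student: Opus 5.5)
We split $\inn Q$ into its $2d$ faces. Fix a face $F$ and a vertex $y\in F$, and let $s=\dist(y,\partial F)+1$ be its distance, inside the face, to the lower-dimensional skeleton of $Q$. The event $A_r^Q$ requires the infinite LERW from $y$ to leave $Q$ without returning and to reach $\partial B(y,r)$. It also requires an independent walk from $y$ to avoid that LERW up to scale $r$.

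We decompose at the scale $L$. On $A_r^Q$, the restriction to $B(y,C_0L/2)$ already gives the local event at scale $L$. Conditionally on the configuration inside $B(y,2L)$, the remaining event is that the LERW and the walk do not intersect between scales $2L$ and $r$. By the separation lemma and quasi-multiplicativity of the non-intersection probability, this conditional probability is at most $C\,\mathrm{Es}(r)/\mathrm{Es}(L)\asymp (L/r)^{\theta_d}$. In dimension two we take $\mathrm{Es}(n)\asymp n^{-3/4}$ from \cite{LV,BM}; in dimension three we take $\mathrm{Es}(n)\asymp n^{\beta-2}$ from \cite[Theorem~4.3]{HLS}. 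The input here is the up-to-constants separation lemma of \cite{LS18,Shiraishi}: conditioned on non-intersection up to scale $2L$, the two paths are well separated at that scale with probability bounded below, and the extension from $2L$ to $r$ then costs comparably to $\mathrm{Es}(r)/\mathrm{Es}(L)$. This reduces \eqref{eq:sharp-inner-flux} to
\[
 \sum_{y\in\inn Q}q_Q(y,C_0L)\le CL^{d-2}.
\]

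For the local estimate, consider $y$ on a face at in-face distance $s$. Within $B(y,s)$ the cube looks like a half-space. The LERW must then leave $y$ into the complementary half-space and avoid the face, while the walk must avoid the LERW. We bound this by the product of two factors:
\begin{enumerate}[label=(\roman*)]
 \item the probability that the LERW (equivalently, by the domain Markov property, the walk generating it) escapes the half-space boundary near $y$ and never comes back to $Q$, which is of order $s^{-1}$ by the gambler's-ruin estimate for the normal coordinate;
 \item the non-intersection cost up to scale $s$, at most $C\,\mathrm{Es}(s)$ or better.
\end{enumerate}
Beyond scale $s$, the LERW must go around an obstacle of size $L$ before reaching distance $C_0L$, which costs at most a constant. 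We are only claiming an upper bound, so we may simply drop the non-intersection factor and use just the first, which is $\le Cs^{-1}$.

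Summing over the face, the number of $y$ with in-face distance $s$ is $O(L^{d-2})$ for each $s\le L$. This gives $\sum_y q\le CL^{d-2}\sum_{s\le L}s^{-1}$, which has an unwanted logarithm in every dimension. To remove it, we keep a genuine gain from the non-intersection factor: the walk must also avoid the LERW, so the probability carries an extra factor $\mathrm{Es}(s)\le Cs^{-\epsilon}$. The sum $\sum_s s^{-1-\epsilon}$ then converges, yielding $CL^{d-2}$.

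The main obstacle is making this product bound rigorous. Conditioning the LERW to avoid the face distorts the law of its initial segment, so we cannot simply multiply the escape probability by $\mathrm{Es}(s)$. We would first try a last-exit decomposition of the generating walk from the $s$-neighbourhood of $y$, combined with the Beurling-type hittability estimates (\cite{LS18} in three dimensions, the discrete Beurling estimate in the plane). These should show that, conditioned on escaping the face, the escaping arm still behaves like an unconditioned arm for the non-intersection event, up to constants.
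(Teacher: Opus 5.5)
There is a genuine gap in your local estimate at scale $L$, specifically in how you remove the logarithm.

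\textbf{The extra factor $\mathrm{Es}(s)$ does not exist.} In $A_r^Q$ only the erased arm must avoid $Q$; the raw arm may enter $Q$ freely. At a flat face the raw arm can escape through $Q$, away from an erased arm confined to the outside. So avoiding that arm is not an independent extra cost of order $\mathrm{Es}(s)$. In fact the joint event already has exponent exactly $1$ there, as the paper's first-entrance identity shows:
\begin{itemize}
\item Take a reference LERW $\eta$ with $r$ a suitable constant multiple of $L$. Then $\PP(\mathcal H_y)=\kappa_yp_y$ with $\kappa_y\asymp r^{2-d}$.
\item Restricting the two arms to $B(y,cr)$ gives $p_y\le Cq_Q(y,cr)$.
\item The curve $\eta$ first enters $Q$ through the middle third of some face with probability bounded below. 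Hence the $\asymp L^{d-1}$ mid-face vertices satisfy $\sum q_Q(y,cr)\ge cL^{d-2}$.
\item Your bound $q\le Cs^{-1}\mathrm{Es}(s)$ with $s\asymp L$ would give at most $CL^{d-1}\cdot L^{-1}\mathrm{Es}(L)=o(L^{d-2})$.
\end{itemize}
So the mid-face vertices alone saturate the lemma, and no gain $s^{-\epsilon}$ is available below scale $s$.

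Relatedly, step (i) is misattributed. Gambler's ruin controls the walk, not its loop erasure, and the walk may make excursions into $Q$ that are later erased. The exponent $1$ belongs to the pair of arms, not to the erased arm alone. In the plane, Beurling bounds the raw arm's avoidance by $Cs^{-1/2}$, so by the above the erased arm alone escapes with probability at least of order $s^{-1/2}$.

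\textbf{Where the logarithm actually disappears.} It goes away at scales between $s$ and $L$ near an edge or corner of $Q$. There the erased arm must get around the exterior of a right-angled wedge, which yields a factor $(s/L)^{\nu}$ with $\nu>0$, and $\sum_{s\le L}s^{-1}(s/L)^{\nu}=O(1)$. Your proposal explicitly discards this (``costs at most a constant''). Establishing such half-space and wedge exponents for the LERW--walk pair, particularly in dimension three, is not provided by the literature you cite.

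\textbf{How the paper avoids this.} The paper sidesteps all local geometry. It proves $q_Q(y,r)\le Cr^{d-2}\PP(\mathcal H_y)$, using the last-visit decomposition and separation estimates modified to build in avoidance of $Q$. It then uses that the first-entrance events $\mathcal H_y$ are disjoint, so
\[
\sum_{y\in\inn Q}q_Q(y,r)\le Cr^{d-2}\PP(\eta\cap Q\ne\varnothing)\le Cr^{d-2}(L/r)^{d-b_d},
\]
by the sharp box-hitting estimates of \cite{HLS,LV}. This controls the sum over $y$ and the decay in $r$ at once. Neither a face-by-face analysis nor your quasi-multiplicativity reduction from $r$ to $C_0L$ is needed.
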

\begin{proof}
Let $\eta$ be a reference LERW in a domain of diameter comparable to
$r$, with $Q$ at distance comparable to $r$ from its starting point
and from the domain boundary.  We take radial LERW in dimension three
and chordal LERW in dimension two.  Write $\mathcal H_y$ for the event
that $y\in\inn Q$ is the first vertex of $Q$ visited by $\eta$.

The last-visit decomposition of Lemma~\ref{lem:surviving-entrance}
(and the loop-erased path-weight formula
\cite[Propositions~9.3.3 and~9.5.1]{LL}) gives
\[
  \PP(\mathcal H_y)=\kappa_y p_y,
  \qquad \kappa_y\asymp r^{2-d},
\]
where $p_y$ is the corresponding non-intersection probability for an
erased incoming arm and an independent raw outgoing arm, with the
erased arm meeting $Q$ only at $y$.  To compare $p_y$ with $q_Q(y,r)$, we use the separation estimates
with the additional requirement that the erased arm avoid $Q$.
Let $\mathrm{Sep}_r$ denote the event that the two arms are
separated at scale $r$, in the sense that the endpoint of each
arm is at distance at least $\varepsilon r$ from the other arm,
for some fixed $\varepsilon>0$. We claim that
\[
 \PP(A_r^Q\cap\mathrm{Sep}_r)\ge c\,\PP(A_r^Q),
\]
with constants independent of $Q,y,r$.

In dimension three, this follows from the history-uniform
separation estimate
\cite[equation~(6.15), in the proof of Theorem~6.1.5]{Shiraishi}.
Indeed, at scale $r/2$, we may augment the raw-walk history
by a closed lattice walk based at $y$ that visits every vertex
of $Q\cap\ZZ^3$. Requiring the erased continuation to avoid
this augmented history incorporates the avoidance of $Q$,
without changing the laws of the continuations.

In dimension two, the iteration of
\cite[Theorem~4.7 and Lemmas~4.8--4.9]{Masson2009} applies
conditionally on both exposed prefixes, from radius $r/2$ to $r$.
On histories whose erased prefix meets $Q$ only at $y$, the
conditional bad-event upper bounds remain valid with the additional
avoidance of $Q$, and the good-extension lower bounds are unchanged
because the outward wedges lie outside $B(y,r/4)$, which contains
$Q$ once $C_0\ge4\sqrt d$.

The laws of the local prefixes of the two arms in $p_y$ are
comparable to those in \eqref{eq:sharp-local-q}
\cite[Propositions~4.2, 4.4 and Corollary~4.5]{Masson2009}. On
$\mathrm{Sep}_r$, the erased arm can be extended through a corridor
to the starting point of $\eta$, and the raw arm through a disjoint
corridor to the target of $\eta$, with probability bounded below,
the erased continuation being sampled under its domain Markov law;
see \cite[equations~(6.21)--(6.22)]{Shiraishi} and
\cite[Proposition~3.5]{Masson2009}. Hence $p_y\ge c\,q_Q(y,r)$, that is,
\begin{equation}\label{eq:sharp-reference-comparison}
  q_Q(y,r)\le Cr^{d-2}\PP(\mathcal H_y),
  \qquad y\in\inn Q.
\end{equation}
The events $\mathcal H_y$ are disjoint as $y$ varies.  Consequently,
by the sharp ball-hitting estimates
\cite[Lemma~4.4]{HLS} for $d=3$ and
\cite[Theorem~6.1 and Corollary~6.3]{LV} for $d=2$,
\begin{align*}
 \sum_{y\in\inn Q}q_Q(y,r)
 \le Cr^{d-2}\PP(\eta\cap Q\ne\varnothing)\le Cr^{d-2}\Bigl(\frac Lr\Bigr)^{d-b_d}
  =CL^{d-2}\Bigl(\frac Lr\Bigr)^{\theta_d},
\end{align*}
as required.
\end{proof}

\subsection{Multiplying the local estimates}
\label{sec:box-dependence}

To bound $\PP(\mathcal H\cap\mathcal V)$ in
\eqref{eq:os-exact-ordered-expansion}, we keep only the conditions
\eqref{eq:os-incoming-target-avoidance}, restricted to balls around the
marked vertices. At $y_i$ they involve the terminal part of the erased
bridge $\gamma^{i-1}$ and the initial part of the raw path $Z^i$. Since
loop erasure does not commute with reversal pathwise, we use Lawler's
bijection $\mathcal R$ on paths from $y_{i-1}$ to $y_i$
\cite{LawlerGreen}, \cite[Lemma~3.1]{BM}: it preserves path weights and
the number of traversals of each directed edge, and
$\LE(\mathcal R\omega)=(\LE(\omega^R))^R$. Replacing every $Z^{i-1}$ by
$\mathcal RZ^{i-1}$ changes neither the law of the bridges nor their
raw traces, and turns the terminal part of $\gamma^{i-1}$ into the
reversal of the initial part of $\LE((Z^{i-1})^R)$, the erasure of a
bridge started at $y_i$. The local pairs at different marks are then
compared with independent pairs by the following estimate.

\begin{lemma}\label{lem:bridge-two-end}\label{lem:weighted-endpoints}
Let $U\subset\ZZ^d$ be finite, $Z\sim\br_U^{a,b}$ and $r_a,r_b\ge1$, and
suppose that $B(a,8r_a)$ and $B(b,8r_b)$ are disjoint subsets of $U$.
Let $Z_a$ be $Z$ stopped on leaving $B(a,r_a)$, and let $\widehat\gamma$
be $\LE(Z^R)$ stopped on leaving $B(b,r_b)$. Then, for all possible
values $\omega,\eta$,
\begin{equation}\label{eq:weighted}
 \PP(Z_a=\omega,\ \widehat\gamma=\eta)\le C\,
 \PP^a\bigl(S[0,T_{B(a,r_a)}]=\omega\bigr)\,
 \PP\bigl(\gamma^\infty[0,j_{r_b}]=\eta\bigr),
\end{equation}
where $\gamma^\infty$ is infinite LERW from $b$ and $C$ depends only on
$d$. In particular, $\PP(\widehat\gamma=\eta)\le
C\PP(\gamma^\infty[0,j_{r_b}]=\eta)$ whenever $B(b,8r_b)\subset U$ and
$a\notin B(b,8r_b)$.
\end{lemma}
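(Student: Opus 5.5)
We prove \eqref{eq:weighted} by factoring the Green bridge weight explicitly at the two ends and then controlling the middle part with Harnack and Green-function comparisons. The event $\{Z_a=\omega,\ \widehat\gamma=\eta\}$ depends on the path $Z$ in two places. One is its initial segment up to the exit from $B(a,r_a)$. The other is the last-exit structure near $b$: $\widehat\gamma$ is the initial part of the loop erasure of the reversed path, and it is determined by the reversed walk from $b$ up to its exit of $B(b,r_b)$.

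First, to handle the $a$-end, we decompose $Z$ at $\tau=T_{B(a,r_a)}$. The bridge weight factors as
\[
 p(\omega)\,\frac{G_U(\omega_\tau,b)}{G_U(a,b)}.
\]
Since $B(a,8r_a)$ and $B(b,8r_b)$ are disjoint, the function $G_U(\cdot,b)$ is harmonic on $B(a,8r_a)$. The Harnack inequality then gives $G_U(\omega_\tau,b)\le C\,G_U(a,b)$. This accounts for the first factor in \eqref{eq:weighted}, provided the remaining conditional law of $\widehat\gamma$ given $Z_a=\omega$ is bounded uniformly in $\omega$.

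Second, we treat the $b$-end. Conditionally on the $a$-piece, the remainder is a Green bridge from $\omega_\tau$ to $b$. Reversing it gives a walk from $b$ conditioned to hit $\omega_\tau$ before leaving $U$, which is an $h$-transform with $h=G_U(\cdot,\omega_\tau)$. We use the standard description of the initial segment of $\LE$ of such an $h$-process: by the last-exit and domain-Markov representation \cite[Section~11.2]{LL}, the probability that the loop erasure, stopped at leaving $B(b,r_b)$, equals $\eta$ is
\[
 p(\eta)\,\mathrm{(loop\ factor)}\,\frac{\PP^{\eta_{end}}(\text{hit }\omega_\tau\text{ before }U^c\text{, avoiding }\eta)}{h(b)}.
\]
The loop factor is the product of Green functions on successive complements, as in \cite[Proposition~9.5.1]{LL}. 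For infinite LERW $\gamma^\infty$ from $b$, the same formula holds with escape-to-infinity probabilities avoiding $\eta$, and with Green functions on $\ZZ^d$ (in $d=2$, one uses the potential kernel version). Comparing the two, the loop factors differ only through loops reaching outside $B(b,r_b)$, which only helps in one direction. The key quantity is the ratio
\[
 \frac{\PP^{x}(\text{hit }\omega_\tau\text{ before }U^c\text{, avoiding }\eta)/h(b)}{\PP^{x}(\text{escape avoiding }\eta)},\qquad x=\eta_{end}\in\partial B(b,r_b).
\]

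The main obstacle is showing that this ratio is uniformly bounded, regardless of the shape of $\eta$. The approach is the standard separation/Harnack argument used for the comparison of LERW in different domains (\cite[Proposition~4.2]{Masson2009} in $d=2$, and \cite{Shiraishi} in $d=3$). Any path from $x$ that reaches $\omega_\tau$ while avoiding $\eta$ must first exit $B(b,4r_b)$ avoiding $\eta$. After that, by the strong Markov property, the conditional probability of hitting $\omega_\tau$ is at most $\max_{z\in\partial B(b,4r_b)}h(z)\le C\,h(b)$, using Harnack for $h$ outside $B(a,\cdot)$ together with the disjointness of the $8r$-balls. The probability of exiting $B(b,4r_b)$ avoiding $\eta$ is in turn comparable to the probability of escaping to infinity avoiding $\eta$. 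This comparability is the content of the cited separation lemmas; the upper direction needed here is the easier one. Integrating over $\omega$ gives \eqref{eq:weighted}. The final claim follows by summing over $\omega$: when only $a\notin B(b,8r_b)$ is assumed, we skip the $a$-end step and run the $b$-end argument with $h=G_U(\cdot,a)$ directly.
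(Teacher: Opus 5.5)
Your $a$-end step is correct, though ordered differently from the paper. The paper conditions on $\widehat\gamma=\eta$, notes that $Z$ first hits $\ran(\eta)$ at the tip $v$, applies Harnack to the Doob transform by $x\mapsto\PP^x\{S$ first hits $\ran(\eta)$ at $v$ before $T_U\}$, and cites Masson for the marginal of $\widehat\gamma$. You instead condition on $Z_a$ and apply Harnack to $G_U(\cdot,b)$. You should add the one-line reason that $\widehat\gamma$ depends only on the remainder: the appended piece $\omega^R$ stays in $B(a,r_a+1)$, so it cannot erase any vertex of $\eta$. In $d=3$ your $b$-end argument also closes, with $x'=\omega_\tau$:
\begin{itemize}
\item the bridge probability is $p(\eta)F_\eta(U)E/G_U(b,x')$, where $F_\eta(U)=\prod_{i\le j}G_{U\setminus\eta[0,i)}(\eta_i,\eta_i)\le F_\eta(\ZZ^3)$ and $E=\sum_y p(v,y)G_{U\setminus\eta}(y,x')$;
\item your Harnack step gives $E/G_U(b,x')\le Ce$, where $e$ is the probability that $S[1,\cdot]$ from $v$ leaves $B(b,4r_b)$ avoiding $\eta$;
\item transience gives $\PP^v(S[1,\infty)\cap\eta=\varnothing)\ge ce$.
\end{itemize}
This replaces the citation there.

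The gap is in $d=2$, and it is not just notation. There $G_{\ZZ^2}(b,b)=\infty$ and escape to infinity has probability zero. The correct formula is $\PP(\gamma^\infty[0,j]=\eta)=p(\eta)\prod_{1\le i\le j}G_{\ZZ^2\setminus\eta[0,i)}(\eta_i,\eta_i)\,\mathrm{Es}_\eta(v)$, with $\mathrm{Es}_\eta(v)=\lim_n G_{B_n}(b,b)\PP^v(S[1,\tau_n]\cap\eta=\varnothing)$, which is comparable to $e$ uniformly. Your chain therefore gives $C\,p(\eta)\,G_U(b,b)\prod_{1\le i\le j}G_{U\setminus\eta[0,i)}(\eta_i,\eta_i)\,e$. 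Once you discard the loop-factor ratio as ``helping'', an extra factor $G_U(b,b)\asymp\log\dist(b,U^c)$ remains. This is unbounded for $U=D_N$ in Proposition~\ref{prop:sharp-ordered-boxes}.

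The lemma is still true, but only through a cancellation you discard in two places. Take $b$ and $x'$ at mutual distance and distance to $U^c$ of order $N$. First, a walk leaving $B(b,4r_b)$ reaches distance of order $N$ before returning to $\eta$ only with probability of order $1/\log(N/r_b)$; the bound $\max h\le Ch(b)$ throws this away. Second, the deficit $\exp\{-\mu(\text{loops hitting }\eta,\ \text{avoiding }b,\ \text{leaving }U)\}$, of order $\log(N/r_b)/\log N$, is what you call ``helping''. Their product is of order $1/G_U(b,b)$. So the comparability ``leaving $B(b,4r_b)$ avoiding $\eta$ is comparable to escaping'' that you attribute to the separation lemmas is meaningless in the plane. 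You must either keep both factors and match them through the loop-measure identity, or invoke Masson's domain comparison (Propositions~4.2, 4.4 and Corollary~4.5) directly for the marginal law of $\widehat\gamma$ given $Z_a$, as the paper does.
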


\begin{proof}[Sketch of proof]
Given $\widehat\gamma=\eta$ with tip $v$, the first visit of $Z$ to
$\ran(\eta)$ is at $v$, and the part of $Z$ before this visit is a
walk from $a$ conditioned to first hit $\ran(\eta)$ at $v$ before
leaving $U$. Thus the conditional law of $Z_a$ is the Doob transform of
the walk law by $x\mapsto\PP^x\{S$ first hits $\ran(\eta)$ at $v$
before $T_U\}$, which is positive and harmonic on $B(a,8r_a)$; Harnack's
inequality bounds the density by $C$. The bound on the law of
$\widehat\gamma$ is \cite[Propositions~4.2, 4.4 and
Corollary~4.5]{Masson2009}, since $\LE(Z^R)$ is the erasure of a walk
from $b$ conditioned to hit $a$.
\end{proof}

For the cubes of one family, with centers $q_i$ and $q_0=0$, define
\begin{equation}\label{eq:sharp-target-scales}
 \rho_i=\begin{cases}
 \min\{\dist(q_i,\partial B(0,N)),\,|q_i-q_{i-1}|,\,|q_i-q_{i+1}|\},
 &i<k,\\[2pt]
 \min\{\dist(q_k,\partial B(0,N)),\,|q_k-q_{k-1}|\},&i=k.
 \end{cases}
\end{equation}
By the choice of the families, $\rho_i\ge A_0L$.

\begin{proposition}\label{prop:sharp-ordered-boxes}\label{prop:r3-ordered-boxes}
For distinct cubes $Q_1,\ldots,Q_k$ of one family,
\begin{equation}\label{eq:sharp-ordered-boxes}
 \PP\{Q_1\to\cdots\to Q_k\}\le C^k\prod_{i=1}^k
 g_d(q_{i-1},q_i)\,L^{d-2}\Bigl(\frac L{\rho_i}\Bigr)^{\theta_d},
\end{equation}
where $g_2(x,y)=\log\frac{CN}{L+|x-y|}$ and $g_3(x,y)=(L+|x-y|)^{-1}$.
The constants do not depend on $N$, $L$, $k$ or the cubes.
\end{proposition}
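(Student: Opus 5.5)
We start from the exact expansion of Lemma~\ref{lem:ordered-expansion}. The Green factors are bounded by the standard estimates $G_{D_N}(y_{i-1},y_i)\le C g_d(q_{i-1},q_i)$, uniformly over $y_{i-1}\in\inn Q_{i-1}$ and $y_i\in\inn Q_i$. This uses $|y_i-y_{i-1}|\asymp L+|q_i-q_{i-1}|$, which holds because distinct cubes of one family are $A_0L$ apart and $Q_i$ is at distance at least $A_0L$ from $0$. In $d=2$ we use the logarithmic Green function of the disk, and in $d=3$ the Newtonian bound. After pulling these factors out of the sum, it remains to show
\[
 \sum_{y_1,\ldots,y_k}\PP(\mathcal H\cap\mathcal V)\le C^k\prod_{i=1}^kL^{d-2}(L/\rho_i)^{\theta_d}.
\]
We weaken $\mathcal H\cap\mathcal V$ to the local events \eqref{eq:os-incoming-target-avoidance}, restricted to the balls $B_i:=B(y_i,\rho_i/16)$. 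Around $y_i$ we keep two conditions: the terminal piece of $\gamma^{i-1}$ inside $B_i$ meets $Q_i$ only at $y_i$, and the initial piece of $Z^i$ up to its exit from $B_i$ avoids that terminal piece. These balls are pairwise disjoint, and each lies inside $D_N$ at distance of order $\rho_i$ from the previous and next marks. This is exactly why $\rho_i$ is defined through the neighbours and the boundary.

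Next we replace each $Z^{i-1}$ by $\mathcal R Z^{i-1}$. This leaves the joint law and the raw traces unchanged, and it turns the terminal piece of $\gamma^{i-1}$ near $y_i$ into the reversal of the initial piece of $\LE((Z^{i-1})^R)$, a loop erasure started at $y_i$. Each bridge $Z^{j}$, for $1\le j<k$, then contributes two local pieces: a raw initial piece near $y_j$ and an erased piece near $y_{j+1}$. The bridge $Z^0$ contributes only the erased piece near $y_1$, and the final walk $Z^k$ contributes only a raw initial piece. The bridges are independent, so we apply Lemma~\ref{lem:bridge-two-end} to each bridge with $a=y_j$, $b=y_{j+1}$, $r_a=\rho_j/16$, $r_b=\rho_{j+1}/16$; the hypotheses $B(a,8r_a)\cap B(b,8r_b)=\varnothing$ and both balls inside $D_N$ follow from the definition of $\rho$. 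This bounds the joint law of the pieces by $C^k$ times a product law. In that product law the pair at mark $i$ consists of an infinite LERW from $y_i$ up to radius $\rho_i/16$ and an independent simple random walk from $y_i$ up to the same radius. For $Z^k$ no comparison is needed, since it is a walk from $y_k$. For $Z^0$ we use the one-sided statement, since $0\notin B(y_1,8r_b)$.

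Under the product law the local events at the different marks involve disjoint pairs, so they factor. Each factor is exactly $q_{Q_i}(y_i,\rho_i/16)$. The sums over $y_i\in\inn Q_i$ then factor as well, and Lemma~\ref{lem:sharp-inner-flux} gives the bound $CL^{d-2}(L/\rho_i)^{\theta_d}$, provided $\rho_i/16\ge C_0L$. This holds once $A_0\ge16C_0$.

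The step I expect to be the main obstacle is the application of Lemma~\ref{lem:bridge-two-end} to both ends of the same bridge at once. Its conditioning argument must control the raw initial piece of $Z^{j}$ near $y_j$ jointly with the erased terminal piece of $\LE(Z^{j})$ near $y_{j+1}$, and after the reversal by $\mathcal R$ these two pieces must be exactly the ones appearing in $q_Q$. We also need the two constraints at mark $i$, which come from different bridges ($\gamma^{i-1}$ and $Z^i$), to be captured by the product measure without loss. Independence of $Z^{i-1}$ and $Z^i$ gives this directly.
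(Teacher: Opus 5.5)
Your proposal is correct and follows the paper's proof step by step. Both arguments bound the Green factors by $g_d$, localize \eqref{eq:os-incoming-target-avoidance} to balls of radius comparable to $\rho_i$, use Lawler's bijection $\mathcal R$, apply Lemma~\ref{lem:bridge-two-end} once per independent bridge, and bound the factorized vertex sums by Lemma~\ref{lem:sharp-inner-flux}.

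Three small corrections:
\begin{enumerate}
\item Balls at nonadjacent marks may overlap; the paper says so explicitly. This does no harm, because your factorization rightly uses only the independence of the bridges, not spatial disjointness.
\item The radius $\rho_i/16$ is borderline. When $\rho_j=\rho_{j+1}=|q_j-q_{j+1}|$, the balls $B(y_j,\rho_j/2)$ and $B(y_{j+1},\rho_{j+1}/2)$ can overlap by $O(L)$, so take $R_i=c_1\rho_i$ with a smaller $c_1$ and $A_0\ge C_0/c_1$.
\item Apply $\mathcal R$ to the full event $\mathcal H\cap\mathcal V$ before localizing. There the raw paths enter only through their traces, which $\mathcal R$ preserves. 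Each bridge then contributes exactly the raw initial piece of $Z^j$ and the initial piece of $\LE((Z^j)^R)$, as Lemma~\ref{lem:bridge-two-end} requires. This settles the concern you flagged.
\end{enumerate}
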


\begin{proof}
Put $R_i=c_1\rho_i$, with $c_1>0$ small and then $A_0\ge C_0/c_1$
large. Then the balls $B(y_i,8R_i)$ lie in $D_N$, the two balls at the
ends of each bridge are disjoint, $0\notin B(y_1,8R_1)$, and
$R_i\ge C_0L$; balls attached to nonadjacent bridges may overlap. In
\eqref{eq:os-exact-ordered-expansion}, bound $\PP(\mathcal H\cap\mathcal V)$
by the probability that \eqref{eq:os-incoming-target-avoidance} holds
inside $B(y_i,R_i)$ for every $i$. After the reversal above, these
conditions concern the pairs formed by the erased incoming prefix and
the raw outgoing prefix at each $y_i$, and each bridge contributes to
two consecutive pairs. Applying Lemma~\ref{lem:bridge-two-end} to the
independent bridges, with one constant per bridge, and noting that the
terminal walk $Z^k$ has the ordinary law, we obtain
\begin{equation}\label{eq:sharp-ordered-vertex-bound}
 \PP\{Q_1\to\cdots\to Q_k\}\le C^k\sum_{\substack{y_i\in\inn Q_i\\
 1\le i\le k}}\prod_{i=1}^kG_{D_N}(y_{i-1},y_i)\,q_{Q_i}(y_i,R_i).
\end{equation}
Since $|q_i-q_{i-1}|\ge A_0L$, the Green function satisfies
$G_{D_N}(y_{i-1},y_i)\le Cg_d(q_{i-1},q_i)$ uniformly in the entrance
vertices. The sums over the $y_i$ then factor, and
Lemma~\ref{lem:sharp-inner-flux} with $R_i\asymp\rho_i$ completes the
proof.
\end{proof}

\subsection{Spatial summation}
\label{sec:covering-moments}\label{sec:sharp-chain-sum}

Proposition~\ref{prop:sharp-ordered-boxes} is the box analogue of the
multi-point estimates used in \cite[Section~5]{BM} and
\cite[Section~8.1]{Shiraishi}, with a point replaced by a box of side
$L$. After rescaling the centers by $L$, the summation over positions
proceeds as there. We record the form we use, since the boundary
distance enters through the same minimum as the neighboring targets.
Let $R\ge1$, $V_R=B(0,R)\cap\ZZ^d$ and $h_R(x)=1+\dist(x,\partial B(0,R))$,
and put
\[
 g_{2,R}(x,y)=\log\frac{AR}{1+|x-y|},\qquad
 g_{3,R}(x,y)=\frac1{1+|x-y|},
\]
with $A$ a large constant. For $x_0\in B(0,R)$, let
$s_i=1+|x_i-x_{i-1}|$, $d_i=\min\{s_i,s_{i+1},h_R(x_i)\}$ for $i<k$,
and $d_k=\min\{s_k,h_R(x_k)\}$.

\begin{lemma}\label{lem:chain-kernel}
For $d\in\{2,3\}$ and $0\le\theta<1$,
\begin{equation}\label{eq:sharp-chain-kernel}
 \sum_{x_1,\ldots,x_k\in V_R}\prod_{i=1}^k
 g_{d,R}(x_{i-1},x_i)\,d_i^{-\theta}\le[C_\theta R^{2-\theta}]^k.
\end{equation}
The same holds for any set of centers with bounded density in unit
cubes in place of $V_R$.
\end{lemma}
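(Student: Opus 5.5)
We sum out the chain one point at a time, starting from $x_k$ and working backwards to $x_1$. The claim is that each step costs at most $C_\theta R^{2-\theta}$, uniformly in the remaining variables.

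\emph{Decoupling the weights.} The difficulty is that $d_i$ depends on $s_{i+1}$, so the weights do not factor over consecutive pairs. Since $d_i^{-\theta}$ is the maximum of three inverse powers, we use
\[
 d_i^{-\theta}\le s_i^{-\theta}+s_{i+1}^{-\theta}+h_R(x_i)^{-\theta},
\]
and write $d_k^{-\theta}$ similarly with two terms. Expanding the product gives at most $3^k$ terms. In each term, every index $i$ carries one factor of one of the following kinds:
\begin{itemize}
\item a backward factor $s_i^{-\theta}$;
\item a forward factor $s_{i+1}^{-\theta}$;
\item a boundary factor $h_R(x_i)^{-\theta}$.
\end{itemize}
The constant $3^k$ is absorbed into $C_\theta^k$.

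Every factor is now attached to an edge $(x_{i-1},x_i)$ or to a single vertex. Edge $i$ receives the power $s_i^{-\theta}$ from index $i$ if that index chose backward, and from index $i-1$ if that index chose forward. So each edge carries $s_i^{-\theta_i'}$ with $\theta_i'\in\{0,\theta,2\theta\}$. The exponent $2\theta$ may reach $2$, which is a problem handled below.

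\emph{The one-step estimates.} After this reduction, we sum $x_k$ first, then $x_{k-1}$, and so on. The single-variable sums needed are the following.

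\begin{itemize}
\item A plain edge: $\sum_{x\in V_R} g_{d,R}(y,x)\le CR^2$. In $d=3$ this is $\sum 1/(1+|x-y|)\asymp R^2$. In $d=2$ it is $\sum\log(AR/(1+|x-y|))\asymp R^2$.
\item An edge with weight $s^{-\theta}$: $\sum_x g_{d,R}(y,x)(1+|x-y|)^{-\theta}\le C_\theta R^{2-\theta}$. This holds because $\theta<1$ keeps the radial integral $\int_0^R r^{d-1}\,r^{-(d-2)}\,r^{-\theta}\,dr$ convergent and of order $R^{2-\theta}$. In $d=2$ the logarithm is absorbed since $\int_0^R r^{1-\theta}\log(AR/r)\,dr\asymp R^{2-\theta}$.
\item A boundary weight: $\sum_x g_{d,R}(y,x)\,h_R(x)^{-\theta}\le C_\theta R^{2-\theta}$, uniformly in $y$. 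To see this, decompose $V_R$ into shells $\{h_R\approx 2^j\}$. In the shell, $\sum_x g_{d,R}(y,x)$ restricted to a layer of thickness $2^j$ is at most $CR\,2^j$: the layer has volume of order $R^{d-1}2^j$, and the kernel is at worst of order $1/|x-y|$ in $d=3$, while in $d=2$ a layer integral of $\log$ is of order $R\,2^j\log$, which is acceptable up to constants. Multiplying by $2^{-j\theta}$ and summing over $j\le\log_2 R$ gives $R\cdot R^{1-\theta}$.
\end{itemize}

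\emph{The main obstacle} is the case where an edge accumulates $s^{-2\theta}$ (with $2\theta\ge 1$) while the adjacent edge is left with no decay. This must be rebalanced so that each summation step gains exactly $R^{-\theta}$.

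The fix is to spend the surplus on the neighbouring edge. If index $i$ chose forward, we use instead the pointwise bound $\min(s_i,s_{i+1})^{-\theta}$, and note the following. When $s_{i+1}\le s_i$, the triangle inequality gives $|x_{i+1}-x_{i-1}|\le 2s_i$, so $g_{d,R}(x_{i-1},x_i)\le Cg_{d,R}(x_{i-1},x_{i+1})$ up to constants. In $d=3$ we also have $g_{d,R}(x_{i-1},x_i)\,s_{i+1}^{-\theta}\lesssim$ a kernel in $x_i$ centred at $x_{i+1}$ with total mass at most $R^{2-\theta}$.

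Concretely, when summing $x_i$ with both of its neighbours fixed, we need
\[
 \sum_{x_i} g_{d,R}(x_{i-1},x_i)\,g_{d,R}(x_i,x_{i+1})\,\min(s_i,s_{i+1})^{-\theta}
 \le C_\theta R^{2-\theta}\,g_{d,R}(x_{i-1},x_{i+1}).
\]
This is a ``$3G$''-type convolution bound, proved by splitting $x_i$ according to which of $x_{i-1}$, $x_{i+1}$ it is closer to. With this bound we sum out the interior points one at a time, replacing the two incident kernels by a single kernel on the neighbours, so each step costs one factor $C_\theta R^{2-\theta}$. Boundary factors are combined with the last edge as in the third one-step estimate above.

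Finally, replacing $V_R$ by any centre set with bounded density per unit cube changes only the constants in the lattice-to-integral comparisons above.
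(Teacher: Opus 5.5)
Your skeleton matches the paper's: you expand each $d_i^{-\theta}$ into at most three terms, then sum $x_k,x_{k-1},\dots,x_1$ in turn. However, what you call the main obstacle is not one, and the detour around it does not work as written.

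Since $\theta<1$, every edge exponent $a\in\{0,\theta,2\theta\}$ satisfies $a<2$; it never reaches $2$. Moreover, $\sup_y\sum_{x\in V_R}g_{d,R}(y,x)(1+|x-y|)^{-a}\le C_aR^{2-a}$ holds for every $a<2$. The radial integrals are $\int_0^R r^{1-a}\,dr$ for $d=3$ and $\int_0^R r^{1-a}\log(AR/r)\,dr$ for $d=2$. Your justification that ``$\theta<1$ keeps the radial integral convergent'' therefore puts the threshold in the wrong place. No single step has to gain exactly $R^{-\theta}$: only the total $\sum_i(a_i+z_i)=k\theta$ over the chain matters, so no rebalancing is needed.

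Your displayed 3G-type convolution inequality is true, but the merging scheme built on it is not a well-defined induction for this chain. When you sum out $x_i$ against $\min(s_i,s_{i+1})^{-\theta}$, the weights $d_{i-1}^{-\theta}$ and $d_{i+1}^{-\theta}$ also depend on $x_i$ through $s_i$ and $s_{i+1}$. You cannot bound them above by deleting those entries from the minimum, since that goes the wrong way. After a merge, the remaining weights also refer to distances to points already summed out.

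The estimate that is actually missing is a mixed one. A vertex $x_i$ can carry the forward factor $s_i^{-\theta}$ from index $i-1$ together with the boundary factor $h_R(x_i)^{-\theta}$ from index $i$. Your list of one-step bounds does not cover this vertex. You need
$\sup_y\sum_{x\in V_R}g_{d,R}(y,x)(1+|x-y|)^{-a}h_R(x)^{-z}\le C_{a,z}R^{2-a-z}$ for $z<1$ and $a+z<2$. It is proved by decomposing into dyadic annuli around $y$ and boundary layers, and this is where $\theta<1$ genuinely enters, through $z<1$.

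This bound does not follow from your three separate bounds. Splitting $s^{-\theta}h_R^{-\theta}\le s^{-2\theta}+h_R^{-2\theta}$ fails when $2\theta\ge1$. In that case $\sum_x g_{d,R}(y,x)h_R(x)^{-2\theta}$ is dominated by the layer $h_R\approx1$ and is of order $R$ for $y$ in the bulk, with an extra $\log R$ at $2\theta=1$. That is larger than the required $R^{2-2\theta}$.

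At each vertex you have $a_i\le2\theta$, $z_i\le\theta$ and $a_i+z_i\le2\theta<2$. With the mixed bound, the backward summation then closes directly, exactly as in the paper, and you can drop the 3G detour.
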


\begin{proof}[Sketch of proof]
Expand $d_i^{-\theta}\le s_i^{-\theta}+s_{i+1}^{-\theta}+h_R(x_i)^{-\theta}$
(without the middle term for $i=k$), which gives at most $3^k$ terms.
In each term the vertex $x_i$ carries a factor
$s_i^{-a_i}h_R(x_i)^{-z_i}$ with $z_i\le\theta<1$, $a_i+z_i\le2\theta<2$
and $\sum_i(a_i+z_i)=k\theta$. Sum over $x_k,x_{k-1},\ldots,x_1$
successively, using
\[
 \sup_{y\in B(0,R)}\sum_{x\in V_R}g_{d,R}(y,x)(1+|x-y|)^{-a}h_R(x)^{-z}
 \le C_{a,z}R^{2-a-z},\qquad z<1,\ a+z<2,
\]
which follows by decomposing into dyadic annuli around $y$ and
boundary layers; the condition $z<1$ makes the boundary layers
summable, and in dimension two the logarithm costs only a constant.
\end{proof}

\begin{proof}[Proof of Theorem~\ref{thm:sharp-fixed-scale}]
Let $X$ be the number of cubes of one family hit by $\gamma_N$. Put
$R=N/L$ and $x_i=q_i/L$. Then $L\,g_3(q_{i-1},q_i)\le Cg_{3,R}(x_{i-1},x_i)$,
$g_2(q_{i-1},q_i)\le Cg_{2,R}(x_{i-1},x_i)$, and $\rho_i/L$ is comparable
to the minimum $d_i$ of Lemma~\ref{lem:chain-kernel}. Insert
\eqref{eq:sharp-ordered-boxes} into
\eqref{eq:os-factorial-moment-expansion}, drop the constraint that the
cubes be distinct, and apply Lemma~\ref{lem:chain-kernel} with
$\theta=\theta_d$ to the rescaled centers:
\begin{equation}\label{eq:sharp-factorial-bound}
 \EE\binom Xk\le[CR^{2-\theta_d}]^k=[C(N/L)^{b_d}]^k.
\end{equation}
This is \eqref{eq:box-outline-moments}. Together with the bounded
number of cubes near $0$ and Lemma~\ref{lem:r3-boundary-count}, it
proves \eqref{eq:sharp-fixed-scale}.
\end{proof}

\subsection{Covering content over all scales}
\label{sec:covering-scales}

Recall $M_{\alpha,\delta}$ from \eqref{eq:box-content-definition}. The fixed-scale bound becomes stronger at small
scales when $\alpha>b_d$, which allows us to sum over scales.

\begin{lemma}\label{lem:tail-upgrade}
For every $b_d<\alpha<d$,
\begin{equation}\label{eq:sharp-all-scale-tail}
 \sup_{0<\delta\le1}\PP_\delta\{M_{\alpha,\delta}(K^\delta)>u\}
 \le C_\alpha\exp\bigl\{-c_\alpha u^{(d-b_d)/(d-\alpha)}\bigr\},
 \qquad u\ge1.
\end{equation}
Consequently, for every $\alpha>b_d$ and $t>0$,
$\sup_{0<\delta\le1}\EE_{\PP_\delta}\exp\{tM_{\alpha,\delta}(K^\delta)\}
<\infty$. Both statements also hold when the supremum in
\eqref{eq:box-content-definition} is taken over all $r\in[\delta,1]$.
\end{lemma}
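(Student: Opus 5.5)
The plan is to derive the tail bound \eqref{eq:sharp-all-scale-tail} by a union bound over dyadic scales, using the fixed-scale estimate \eqref{eq:sharp-fixed-scale-input} of Theorem~\ref{thm:covering-input} at each scale together with the trivial volume bound $N_r(K^\delta)\le Cr^{-d}$. For $r=2^{-j}\ge\delta$, the event $\{r^\alpha N_r>u\}$ is empty unless $Cr^{\alpha-d}>u$, that is, unless $r<r_u:=(C/u)^{1/(d-\alpha)}$. So only scales $r<r_u$ contribute. For such $r$, write $u r^{-\alpha}=\big(u r^{b_d-\alpha}\big)r^{-b_d}$ and apply \eqref{eq:sharp-fixed-scale-input} with parameter $u_r:=u r^{b_d-\alpha}\ge u\ge1$. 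This gives
\[
 \PP_\delta\{r^\alpha N_r>u\}\le C\exp\{-c\,u\,r^{-(\alpha-b_d)}\}.
\]

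Summing over dyadic $r=2^{-j}<r_u$ gives a sum dominated by its largest term, the one at $r\asymp r_u$. Indeed, the exponents $u r^{-(\alpha-b_d)}$ grow geometrically as $j$ increases, so the sum is at most a constant times the first term. At $r\asymp r_u$,
\[
 u\,r_u^{-(\alpha-b_d)}\asymp u\cdot u^{(\alpha-b_d)/(d-\alpha)}=u^{(d-b_d)/(d-\alpha)},
\]
which yields \eqref{eq:sharp-all-scale-tail}. The constants depend on $\alpha$ through $d-\alpha$ and the geometric sum, and they are uniform in $\delta$ because \eqref{eq:sharp-fixed-scale-input} is. For $\delta$ not below $\delta_0$, the statement is trivial, since $N_r\le C\delta^{-d}$ is bounded.

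The exponential moment follows because $(d-b_d)/(d-\alpha)>1$ when $\alpha>b_d$, so the tail is superexponential and $\EE e^{tM}=\int e^{tu}\PP(M>u)\,t\,du$ converges for every $t$. For $\alpha\ge d$ there is nothing to prove beyond the case $\alpha<d$, since $M_\alpha\le M_{\alpha'}$ for $\alpha'\le\alpha$ and scales $r\le1$. Alternatively one applies the argument with any $\alpha'\in(b_d,\min(\alpha,d))$.

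For the version with the supremum over all $r\in[\delta,1]$, note that any $r$ lies in some $[r',2r']$ with $r'$ dyadic and $r'\ge\delta/2$. Each $r$-box is covered by boundedly many $r'$-boxes of a translated grid, so $N_r\le 2^d N_{r'}$ for the appropriate grid translate; if $r'<\delta$, use scale $2r'$ instead. The fixed-scale bound is uniform over translations, so the argument above applies unchanged. The main obstacle is only this bookkeeping with the grid comparison; the core is the scale-by-scale union bound.
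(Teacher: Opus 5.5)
Your proposal is correct and follows essentially the paper's proof: the volume bound $N_r\le Cr^{-d}$ leaves only dyadic scales $r<r_u$, and the fixed-scale estimate is applied with parameter $ur^{b_d-\alpha}\ge u$. The geometrically growing exponents then sum to a tail $\exp\{-cu^{(d-b_d)/(d-\alpha)}\}$ with $(d-b_d)/(d-\alpha)>1$, and comparison with a neighbouring dyadic scale handles the supremum over all $r\in[\delta,1]$.

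In that last step you do not need translated grids. For the fixed grids, $N_r\le C_dN_s$ whenever $r/2\le s\le 2r$, which already gives the comparison. An $r$-dependent translate over a continuum of $r$ would not reduce to countably many events. The paper's choice of a dyadic $s\in[r,2r]$ also removes your special case $r'<\delta$.
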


\begin{proof}
Since $N_r(K^\delta)\le C_Dr^{-d}$, the event
$\{r^\alpha N_r(K^\delta)>u\}$ is empty unless
$r\le r_*(u)=(C_D/u)^{1/(d-\alpha)}$. At the $j$th dyadic scale below
$r_*(u)$, apply the rescaled form of
Theorem~\ref{thm:sharp-fixed-scale} with parameter
$ur^{b_d-\alpha}\ge cu^{(d-b_d)/(d-\alpha)}2^{j(\alpha-b_d)}$, and sum
over $j$. Since $(d-b_d)/(d-\alpha)>1$, integrating the tail gives the
exponential moments; for $\alpha\ge d$ the content is bounded. Finally,
$r^\alpha N_r\le Cs^\alpha N_s$ for a dyadic $s\in[r,2r]$.
\end{proof}

\begin{proof}[Proof of Theorem~\ref{thm:covering-input}]
The fixed-scale bound \eqref{eq:sharp-fixed-scale-input} is
Theorem~\ref{thm:sharp-fixed-scale} after rescaling, and \eqref{eq:EB}
follows from Lemma~\ref{lem:tail-upgrade}.
\end{proof}

\appendix

\section{Spectral support and capacitary constructions}\label{app:spectral}

In this appendix, we prove Propositions~\ref{prop:all-support}
and~\ref{prop:thresholds}. We use the capacity theory of~\cite[Chapters~4 and~8]{MP}
and the Sobolev formulation in~\cite[Section~2]{BBcapacity};
see also~\cite{GR} for related spectral results.

\subsection{Brownian containment and capacitary obstacles}

Throughout the appendix, Brownian motion has generator
$\Delta/(2d)$. The first lemma shows that the classical limiting arc
contains the separating points of a coupled Brownian path.

\par\nopagebreak[4]
\begin{lemma}\label{lem:containment}
There is a coupling of the trace of the ordinary LERW scaling limit $K$ and stopped
Brownian trace $\mathcal B=B[0,T_D]$ with $K\subset\mathcal B$ such that
every $z\in\mathcal B\setminus\{0,B_{T_D}\}$ whose deletion separates
$0$ from $B_{T_D}$ in $\mathcal B$ belongs to $K$.
\end{lemma}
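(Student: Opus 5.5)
We couple at the discrete level and pass to the limit. Let $S$ be the simple random walk on $\delta\ZZ^d$ from $0$ stopped at $T_\delta$, so that $\gamma^\delta=\LE(S[0,T_\delta])$ and $\ran(\gamma^\delta)\subset\ran(S[0,T_\delta])$ deterministically. The key discrete observation is this: if a vertex $z$ of the walk trace is a cut point of the trace graph, meaning that deleting $z$ from $\ran(S[0,T_\delta])$ disconnects $0$ from $S_{T_\delta}$ in the nearest-neighbor graph, then $z\in\ran(\gamma^\delta)$. Indeed, $\gamma^\delta$ is a nearest-neighbor path from $0$ to $S_{T_\delta}$ inside the trace, so it must pass through every separating vertex.

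\textbf{Step 1 (joint tightness and limit).} We take the joint law of $(\eta^\delta,S^\delta)$, where $S^\delta$ is the walk rescaled diffusively with time $\delta^2$ per step. By Lemma~\ref{lem:planar-free-natural} in dimension two and Proposition~\ref{prop:3d-natural} in dimension three, together with Donsker's theorem and the exit-time invariance principle (sphere regularity), each marginal is tight. Along a subsequence we obtain a limit $(\eta,B[0,T_D])$ with the correct marginals, and by Skorokhod we assume almost sure convergence. Hausdorff convergence of traces together with $K^\delta\subset\mathcal B^\delta$ (up to $O(\delta)$ interpolation) then gives $K\subset\mathcal B$.

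\textbf{Step 2 (transfer of separation).} Fix $z\in\mathcal B\setminus\{0,B_{T_D}\}$ that separates $0$ from $B_{T_D}$ in $\mathcal B$. We must show $z\in K$. We argue by contradiction: suppose $\dist(z,K)=3\varepsilon>0$. For small $\delta$, $K^\delta$ avoids $B(z,2\varepsilon)$. On the other hand, separation of $\mathcal B$ at $z$ should propagate: for any fixed $\varepsilon$, $\mathcal B\setminus B(z,\varepsilon)$ has $0$ and the endpoint in different connected pieces (relative to $\mathcal B$). Here we need a quantitative version: the time-ordered path $B$ has a cut time $s$ with $B(s)=z$, so $B[0,s-h]$ and $B[s+h,T_D]$ are at positive distance $\kappa(h)$, with $\kappa(h)\to0$ as $h\to0$. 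We then need the approximating walk to inherit a near-cut structure: $S^\delta[0,s-h]$ and $S^\delta[s+h,T]$ are at distance $\geq\kappa/2$ for small $\delta$. Any path in the walk trace from $0$ to the exit must cross through $S^\delta[s-h,s+h]\subset B(z,\varepsilon)$ once $h$ is small, by uniform continuity. Hence $\gamma^\delta$, which is such a path, meets $B(z,\varepsilon)$, contradicting $K^\delta\cap B(z,2\varepsilon)=\varnothing$.

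\textbf{Main obstacle.} The delicate point is that a separating point of the trace $\mathcal B$ need not be visited only once: it could be a topological cut point reached at multiple times. In the plane and in space, however, Brownian cut points of the trace correspond to cut times, and points of $\mathcal B$ are almost surely visited at most a bounded number of times in $d=3$ (double points only); in $d=2$ the multiplicity can be higher. We handle this by arguing with all visit times: if $z$ separates, then every visit time interval shrinks into $B(z,\varepsilon)$, and the segments of $B$ between visits to $B(z,\varepsilon)$ are either entirely on the $0$ side or entirely on the exit side, with positive mutual distance by compactness. Uniform walk convergence then preserves this for small $\delta$. Since $z$ is arbitrary, we need the statement simultaneously for all separating $z$. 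Our contradiction argument is pathwise on the almost sure coupling event, so it holds for all $z$ at once, and no countable reduction is needed.
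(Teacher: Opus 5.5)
Your proof is correct, but it obtains the inclusion by a different route from the paper, and most of your effort goes into a step that is immediate.

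\textbf{The inclusion.} The paper takes $K\subset\mathcal B$ from the trace/loop-soup identity of Sapozhnikov--Shiraishi \cite[Theorem~1.1]{SS}: the Brownian trace is the LERW limit together with the loops of an independent Brownian loop soup that hit it. It then disintegrates over the trace to recover a parametrized Brownian motion, and uses that $K$ is an arc between its endpoints \cite[Theorem~1.2]{SS}.

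You build the coupling by hand. You use joint tightness of $(\eta^\delta,S^\delta)$ and Skorokhod, and then pass the deterministic inclusion $\ran\LE(\omega)\subset\ran\omega$ (each LERW edge is a walk step) to the limit under Hausdorff convergence. Since the lemma only asserts that a coupling exists, a subsequential limit suffices. Your route is more elementary: it needs only the two marginal invariance principles, and it gives the parametrized Brownian path directly. What it does not give is the loop-soup description of $\mathcal B\setminus K$, which the lemma does not require.

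\textbf{The separating points.} Replace your Step~2 by a one-line argument. Under your coupling, $\eta^\delta$ ends at $S_{T_\delta}\to B_{T_D}$, so $K$ is a path-connected subset of $\mathcal B$ containing $0$ and $B_{T_D}$. If $z\notin K$, then $K\subset\mathcal B\setminus\{z\}$ joins the two endpoints, contradicting separation. This is all the paper's ``arc'' remark actually uses, and it holds for all $z$ simultaneously.

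The discrete transfer you wrote is therefore unnecessary, and as stated it is incomplete.
\begin{itemize}
\item Separation of the trace at $z$ does not in general give a cut time $s$ with $B(s)=z$. In the plane, $z$ may be visited several times, with excursions to both sides in between.
\item Your multi-visit repair is only sketched. The right dichotomy is ``the component of $0$ in $\mathcal B\setminus\{z\}$ versus the rest.'' The positive distance between these two pieces outside $B(z,\varepsilon)$ also needs an argument: components of $\mathcal B\setminus\{z\}$ are relatively open because $\mathcal B$ is a Peano continuum, hence locally connected.
\end{itemize}
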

\begin{proof}
The trace/loop-soup identity \cite[Theorem~1.1]{SS} supplies the inclusion;
disintegration over the trace supplies a parametrized Brownian motion.
By \cite[Theorem~1.2]{SS}, $K$ is an arc between its endpoints, hence
contains every separating point, as observed after Theorem~1.4 there.
\end{proof}

We use the capacity associated with the Dirichlet form of $A_U$
and, in dimension three, the Newtonian capacity $\capn$ of the whole
space, which satisfies $\capn(z+rF)=r\capn(F)$. A function satisfies
the Dirichlet condition on an obstacle if it vanishes quasi-everywhere
there, that is, outside a set of capacity zero.

We next record two properties of the principal eigenvalue of slit
domains.

\par\nopagebreak[4]
\begin{lemma}\label{lem:capacity-spectrum}
If $K\cap D$ contains an interior compact set of positive relative
capacity, then $\lambda_1(D\setminus K)>\lambda_1(D)$.
For compact $K\subset\overline D$ and
$K^\varepsilon=\{x:\dist(x,K)\leq\varepsilon\}$,
\begin{equation}\label{eq:neighborhood-spectrum}
 \lambda_1(D\setminus K^\varepsilon)\downarrow
 \lambda_1(D\setminus K)\qquad(\varepsilon\downarrow0).
\end{equation}
For classical LERW the strict inequality holds almost surely in both dimensions.
\end{lemma}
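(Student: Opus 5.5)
We prove the three assertions of Lemma~\ref{lem:capacity-spectrum} in turn, working throughout with the Rayleigh quotient \eqref{eq:rayleigh-convention} on $H^1_0$ spaces with the capacitary Dirichlet condition.

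\emph{Strict inequality.} Let $\phi_1$ be the positive ground state of $A_D$; it is real-analytic and strictly positive in $D$. Suppose $\lambda_1(D\setminus K)=\lambda_1(D)$. Compact embedding gives a minimizer $f\in H^1_0(D\setminus K)\subset H^1_0(D)$. Its Rayleigh quotient equals $\lambda_1(D)$, so $f$ is a ground state of $D$, hence $f=c\phi_1$ with $c\ne0$. But $f$ vanishes quasi-everywhere on $K\cap D$, while $\phi_1$ is bounded below on the interior compact set $F$, which has positive capacity. This is a contradiction, so the inequality is strict. Domain monotonicity supplies the non-strict inequality.

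\emph{Convergence \eqref{eq:neighborhood-spectrum}.} Monotonicity in $\varepsilon$ and domain monotonicity give that $\lambda_1(D\setminus K^\varepsilon)$ decreases to a limit $\lambda_*\ge\lambda_1(D\setminus K)$. For the reverse inequality, it suffices to approximate a normalized ground state $f$ of $D\setminus K$ in $H^1$ norm by functions vanishing on a neighborhood of $K$. By the definition of $H^1_0(D\setminus K)$ as the closure of $C_c^\infty(D\setminus K)$, there are $g_n\in C_c^\infty(D\setminus K)$ with $g_n\to f$ in $H^1$. Each $g_n$ has compact support in the open set $D\setminus K$, so it lies at positive distance $\varepsilon_n$ from $K$. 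Hence $g_n\in H^1_0(D\setminus K^{\varepsilon})$ for all $\varepsilon<\varepsilon_n$. The Rayleigh quotients of $g_n$ converge to $\lambda_1(D\setminus K)$, which gives $\lambda_*\le\lambda_1(D\setminus K)$. This step is soft because we chose the closure definition; one only has to note that $D\setminus K$ is open, which holds since $K$ is compact.

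\emph{Application to classical LERW.} By the first part, it suffices to find a nondegenerate connected compact piece of $K$ inside $D$. By Lemma~\ref{lem:containment} (more precisely \cite[Theorem~1.2]{SS}), $K$ is an arc from $0$ to a boundary point. Hence $K\cap\overline{B(0,1/2)}$ contains a continuum joining $0$ to $\partial B(0,1/2)$, and this continuum is an interior compact set.
\begin{itemize}
\item In dimension two, a nondegenerate continuum has positive logarithmic capacity, and therefore positive relative capacity in $D$.
\item In dimension three, continua can be polar, so we need more. Since $K\cap B(0,1/2)\subset\mathcal B$ contains the cut points of the coupled Brownian path, it has Hausdorff dimension greater than one almost surely (indeed $\beta>1$). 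Sets of dimension greater than $d-2=1$ have positive Newtonian capacity by Frostman's lemma \cite[Chapter~4 and~8]{MP}. Newtonian capacity is comparable to the relative Dirichlet capacity for compact subsets of the interior.
\end{itemize}

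We expect the three-dimensional capacity positivity to be the main obstacle. A first approach is to use the known value $\dim_H K=\beta>1$ \cite{ShiraishiDim} together with the Frostman criterion, restricted to $B(0,1/2)$ via the arc property. A fallback is a hitting argument: an independent Brownian motion started near $K$ hits $K$ with positive probability, by the hittability used in \eqref{eq:lt-near-miss-bound}, and a set is nonpolar exactly when Brownian motion hits it with positive probability.
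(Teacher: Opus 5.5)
Your arguments for the strict inequality, for the neighbourhood convergence \eqref{eq:neighborhood-spectrum} (approximating by $C_c^\infty(D\setminus K)$ functions, each at positive distance from $K$), and for the planar case are the paper's own. The one step that fails as written is the three-dimensional localization.

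The arc property only gives a continuum inside $K\cap\overline{B(0,1/2)}$. As you note yourself, continua in $\RR^3$ can be polar. The fact $\dim_H K=\beta>1$ is a statement about the whole trace and does not say which part of the arc carries the dimension. Likewise, ``$K\cap B(0,1/2)$ contains the cut points of $\mathcal B$'' only gives you the cut points that happen to lie in $B(0,1/2)$, and you have not controlled their dimension. So neither justification yields $\dim_H(K\cap B(0,1/2))>1$, which is what your Frostman step needs.

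The fix is not to insist on a particular ball. Since $K\subset\mathcal B=B[0,T_D]$ and the stopped path meets $\partial D$ only at $B_{T_D}$, you have
$K=\{B_{T_D}\}\cup\bigcup_{n}\bigl(K\cap\overline{B(0,1-1/n)}\bigr)$. Countable stability of Hausdorff dimension then gives $\sup_n\dim_H\bigl(K\cap\overline{B(0,1-1/n)}\bigr)=\dim_H K>1$. Hence some interior compact piece has dimension above $1$, and Frostman's criterion gives it positive Newtonian, hence positive relative, capacity. This is exactly the content of the paper's line ``an interior compact part has positive capacity by Frostman's criterion'' (the paper cites \cite[Theorem~1.4]{SS} for $\dim_H K>1$; your citation of \cite{ShiraishiDim} with $\beta>1$ serves equally well).

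Your hitting-probability fallback needs the same countable-union step, using countable subadditivity of capacity. It would also require transferring the discrete near-miss bound \eqref{eq:lt-near-miss-bound} to the limiting curve, so it is the more expensive route.
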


\begin{proof}
Equality of the first eigenvalues would make a minimizer on the slit
domain a multiple of the positive ground state on $D$, which cannot
vanish quasi-everywhere on an interior set of positive capacity.
Neighborhood continuity follows from domain monotonicity and the
Rayleigh principle, since every compactly supported smooth test
function on $D\setminus K$ avoids $K^\varepsilon$ for small
$\varepsilon$. A planar arc has an interior subcontinuum of positive
capacity. In dimension three, $\dim_HK>1$ by~\cite[Theorem~1.4]{SS},
so an interior compact part has positive capacity by Frostman's
criterion. This proves the last assertion.
\end{proof}
If $K$ contains a fine array of obstacles of sufficiently large
capacity, Brownian motion hits $K$ quickly, which bounds the principal
eigenvalue from below.

\par\nopagebreak[4]
\begin{lemma}
\label{lem:cap-mesh}
In dimension three, let $K\subset\overline D$ be compact and fix
$A,L,\kappa>0$. Suppose $K$ contains compact sets
$F_i\subset B(z_i,Ar)$ with $\capn(F_i)\geq\kappa r$, and every point
of $D$ more than $Lr$ from the sphere is within $Lr$ of some $z_i$.
There are $c,r_0>0$, depending only on these parameters, such that
$\lambda_1(D\setminus K)\geq cr^{-2}$ for $r<r_0$.
\end{lemma}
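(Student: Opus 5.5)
We bound the killed semigroup in $L^\infty$: Brownian motion started anywhere in $D\setminus K$ must hit $K$ or leave $D$ within time of order $r^2$ with probability bounded below. Once we have
\[
 \sup_{x\in D\setminus K}\PP^x(T_{D\setminus K}>s_0r^2)\le1-p
\]
for constants $s_0,p>0$ depending only on $A,L,\kappa$, the Markov property iterates this to $\sup_x\PP^x(T_{D\setminus K}>t)\le(1-p)^{\lfloor t/(s_0r^2)\rfloor}$. This is an $L^\infty\to L^\infty$ bound $\|e^{-tA_{D\setminus K}}\|\le Ce^{-ct/r^2}$, and its adjoint gives the same bound on $L^1$. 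Riesz--Thorin then gives the same decay on $L^2$, so $\lambda_1(D\setminus K)\ge c/r^2$ with $c=\log(1/(1-p))/s_0$.

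The uniform escape bound splits by position. If $\dist(x,\partial D)\le 2Lr$, then by the exterior ball condition on the sphere, Brownian motion from $x$ leaves $D$ before time $r^2$ with probability at least $p_1>0$ depending only on $L$, provided $r<r_0$ so that the sphere is nearly flat at scale $Lr$. Otherwise $x$ is more than $Lr$ from the sphere, so the hypothesis gives $z_i$ with $|x-z_i|\le Lr$. Then $F_i\subset B(z_i,Ar)\subset B(x,(A+L)r)$.

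For this interior case we use the standard capacity lower bound for the hitting probability. For Brownian motion in $\RR^3$ started at $x$ and compact $F\subset B(x,\rho)$,
\[
 \PP^x(\tau_F<\infty)\ge c\,\frac{\capn(F)}{\rho},
\]
with $c$ depending on the normalization of $\capn$; this is, e.g., \cite[Chapter~8]{MP}, via the Green function comparison $G(x,y)\ge c/\rho$ on $B(x,\rho)$ and the last-exit formula. With $\rho=(A+L)r$ the bound is at least $c\kappa/(A+L)$, which is scale invariant by Brownian scaling. We then truncate in time and space. The probability of hitting $F$ after leaving $B(x,Mr)$ for large $M$ is at most $\sup_{|y|=Mr}\PP^y(\tau_{B(x,(A+L)r)}<\infty)=(A+L)/M$. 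Taking $M$ large, and then $s_0$ large so that $\PP^x(\tau_{B(x,Mr)^c}>s_0r^2)$ is small, we get $\PP^x(\tau_{F_i}\le s_0r^2)\ge p_2>0$. If the path exits $D$ before reaching $F_i$, that also counts as an exit from $D\setminus K$, so in every case $\PP^x(T_{D\setminus K}\le s_0r^2)\ge\min(p_1,p_2)=:p$.

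The main obstacle is to make sure the capacity-to-hitting bound is uniform: it must use the free-space Newtonian capacity and hold for arbitrary compact $F$ at scale $r$, with the generator normalization $\Delta/6$ affecting only constants. The reduction from the $L^\infty$ survival bound to the Dirichlet eigenvalue on slit domains also has to work with the $H^1_0$ definition. For this we use that the killed semigroup is the transition semigroup of Brownian motion killed on hitting $K$ or leaving $D$, which holds because $K$ is closed and $H^1_0(D\setminus K)$ functions vanish quasi-everywhere on $K$. Together with the trace bound $H_U(t)\le\|e^{-(t/2)A}\|_{1\to\infty}$ times the volume, this gives $e^{-t\lambda_1}\le H_U(t)\le Ce^{-ct/r^2}$, and letting $t\to\infty$ yields $\lambda_1\ge c/r^2$.
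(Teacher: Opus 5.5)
Your proposal is correct and follows essentially the same route as the paper. You show a uniform probability of hitting $K$ or leaving $D$ within time of order $r^2$ (exit through the boundary layer near the sphere, and a capacity lower bound for hitting a nearby $F_i$ in the interior), then transfer the resulting $L^\infty$ bound on the killed semigroup to $L^2$ by symmetry and interpolation.

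The differences are minor. You use the free-space hitting bound and subtract hits occurring after leaving $B(x,Mr)$, whereas the paper applies the equilibrium-measure formula directly with the Green kernel of a ball of radius $Mr$. Your closing heat-trace remark is superfluous, and as written it omits the survival factor that actually produces the exponential decay.
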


\begin{proof}
Within $Lr$ of the boundary, Brownian motion exits $D$ by time $Tr^2$
with uniformly positive probability. Elsewhere, choose a nearby
$F_i$ and enclose it and the starting point in a ball of radius $Mr$,
with $M$ a fixed large constant. Its Green kernel is bounded below
by $c/r$ on these points. The equilibrium-measure formula therefore
gives a hitting probability at least
$(c/r)\capn(F_i)\ge c\kappa$ before leaving the ball.
Choose fixed $T$ large enough that a late exit costs at most half
this probability. Thus Brownian motion hits $K$ or leaves $D$ by
$Tr^2$ with probability at least some fixed $p>0$, uniformly in its
starting point. The killed semigroup has $L^\infty$ norm at most
$1-p$ at that time; symmetry and interpolation give the same
$L^2$ bound, hence $\lambda_1(D\setminus K)\ge cr^{-2}$.
\end{proof}

\subsection{Positive support at arbitrarily large eigenvalues}

This subsection is dedicated to Proposition~\ref{prop:all-support}. We obtain traces with arbitrarily large slit-domain eigenvalue by
confining the coupled Brownian motion to suitable regions. In
dimension three, the obstacles come from the following construction,
which joins Brownian pieces in small cubes while keeping enough global
cutpoints.

\par\nopagebreak[4]
\begin{lemma}
\label{lem:rooms}
There are a bounded cube $V\subset\RR^3$, constants
$\kappa>0$, $0<u_-<u_+<1$, and disjoint entrance and exit gate balls
in $V$ with the following property. Place finitely many rooms
$z_i+rV$ strictly inside $D$, at mutual distances at least a large fixed
multiple of $r$, to accommodate their entrance and exit stubs. Then the
rooms can be connected in order from $0$ to the sphere so that, with
positive Brownian probability, every room contains a compact set $F_i$
consisting of separating global cutpoints and their limits, with
\[
 F_i\subset z_i+rV,\qquad \capn(F_i)\ge\kappa r
 \quad\text{for every room }i.
\]
The template constants are independent of $r$ and the number of rooms;
the probability need not be uniform.
\end{lemma}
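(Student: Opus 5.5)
The plan is to build the path explicitly as a concatenation of Brownian pieces, each confined to a tube or to a room, and to use the strong Markov property to multiply positive probabilities. First I fix the template. Let $V=(0,1)^3$, let $E$ be the entrance gate ball centered near one face, and let $X$ be the exit gate ball near the opposite face; both have radius $\epsilon_0$ and lie well inside $V$.

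Inside the template I fix a deterministic smooth path $\pi$ from the center of $E$ to the center of $X$ that winds through $V$. I then consider the event $G$ that a Brownian motion started anywhere in $E$ stays within a thin tube around $\pi$ and ends in $X$. In addition, $G$ requires the following: in a middle subtube $\Theta\subset V$ (a straight segment of the tube, say between parameters $u_-$ and $u_+$ of $\pi$), the path has a set of cut times whose cutpoints form a compact set $F$ of positive Newtonian capacity. Three-dimensional Brownian motion has cutpoints of Hausdorff dimension $>1$, as in the Lawler cut-point results. So with positive probability the cutpoint set in $\Theta$ contains a compact set of dimension $>1$, and by Frostman it has capacity at least some $\kappa_0>0$. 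The probability is bounded below uniformly over starting points in $E$ by continuity and Harnack-type arguments. I define $\kappa=\kappa_0$ and $p_0=\inf_{x\in E}\PP^x(G)>0$.

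Next comes the global connection. Given rooms $z_i+rV$ in order, I choose disjoint corridors, also called stubs: smooth tubes of width $\asymp r$ joining $0$ to the entrance of room $1$, the exit of room $i$ to the entrance of room $i+1$, and the exit of the last room to the sphere. The corridors avoid all other rooms and each other, which the large mutual spacing allows, and each lies inside $D$. The full event $\mathcal E$ requires the Brownian motion to follow each corridor in turn, entering the next gate ball (scaled by $r$), and within each room to realize the scaled event $z_i+rG$. It also requires the path to never return to a previously visited room or corridor after leaving it, which is automatic from the tube confinement and disjointness. By the strong Markov property and Brownian scaling, $\PP(\mathcal E)\ge\prod(\text{corridor probabilities})\cdot p_0^{\#\text{rooms}}>0$. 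This probability is not uniform, as allowed.

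The remaining point is that the local cutpoints are \emph{global} separating cutpoints of $B[0,T_D]$. Since the path visits the tubes in a linear order, it enters the room-$i$ tube once and leaves once. A local cut time $t$ in $\Theta_i$ satisfies $B[0,t)\cap B(t,T_D]=\varnothing$ within the room tube. Before entering the room and after leaving it, the path lies in other disjoint tubes, so the disjointness extends to the whole path. Hence each such point separates $0$ from $B_{T_D}$, and $F_i$, the closure of these cutpoints inside $z_i+r\Theta$, consists of separating cutpoints and their limits. Scaling gives $\capn(F_i)=r\capn(F)\ge\kappa r$.

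The main obstacle is the uniform positive-capacity statement in the template. I need a fixed $\kappa$ such that, with probability bounded below uniformly over entrance points, a tube-confined Brownian path has cutpoint set of capacity $\ge\kappa$ in $\Theta$. I would deduce this from positivity of the cutpoint dimension (exponent $\xi<2$ in $d=3$) plus the fact that $\capn$ of a random compact set is a measurable functional with $\PP(\capn>0)>0$, so $\PP(\capn\ge\kappa_0)\ge p/2$ for some $\kappa_0$. Uniformity in the start follows by absolute continuity of the path after a short time together with tube conditioning.
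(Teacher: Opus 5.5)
There is a genuine gap in your globalization step. You claim that a cut time $t$ of the room piece is a cut time of all of $B[0,T_D]$ because the rest of the path lies in other, disjoint tubes. The two corridors incident to room $i$, however, are not disjoint from the room tube. The incoming corridor must reach the entrance gate $z_i+rE$, and the outgoing one starts at $z_i+rX$; both gates are inside $z_i+rV$.

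Confinement to a thin tube around $\pi$ does not prevent backtracking along the tube. So nothing in your event $G$ stops the room path after a cut time $t$ from returning near the entrance gate, where it can meet the terminal stretch of the incoming corridor path. Symmetrically, the room path before $t$ may already have come close to the exit gate, where the outgoing corridor path starts and can hit it. Local cut points therefore need not be global, and this is not ``automatic from the tube confinement and disjointness''. Localizing cut points spatially to $\Theta$, rather than to a time window, makes this worse: you then have no control over which parts of the tube the past and future of a cut time occupy.

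The missing idea is a separation condition at the gates, and it is the heart of the paper's proof. There the cut times are localized to a fixed window $[u_-,u_+]$. Deterministic gate-and-stub tubes $T^\pm$ are then built from three ingredients:
\begin{enumerate}
\item point polarity and reversal, to choose small gate balls around the initial and terminal points;
\item polarity of fixed polygonal arcs in $\RR^3$, for the stubs;
\item a countable rational family of candidates, followed by shrinking to tubular neighborhoods.
\end{enumerate}
This yields a positive-measure event on which $T^-\cap\omega[u_-,1]=\varnothing$ and $T^+\cap\omega[0,u_+]=\varnothing$. The incoming connector, which lives in $T^-$, then avoids the future of every retained cut time, and the outgoing one avoids its past. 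Disjointness handles all non-incident pieces.

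A secondary point concerns positivity. Your argument gives $\PP(\capn\ge\kappa_0)\ge p/2$ for free Brownian motion, but says nothing about its intersection with a thin winding-tube event of small probability. The paper sidesteps this by confining the room piece only to a large cube $V$, which costs little probability. It also needs only positive measure for starts integrated over the gate ball, not your uniform infimum $p_0$.

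Your construction can be repaired along these lines. For instance, require the room path after $u_-$ to avoid a neighborhood of the entrance gate and incoming stub, and before $u_+$ to avoid a neighborhood of the exit gate and outgoing stub. This gate-separation event has to be built in explicitly and shown to coexist with the cutpoint event.
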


\begin{proof}
Consider a unit-time Brownian piece confined to a sufficiently large
cube $V$, with its starting point integrated over an interior ball.
The cutpoint-dimension theorem~\cite[Theorem~1.1 and the discussion
before Corollary~1.2]{Cutpoints} and Frostman's criterion give a
positive-measure event on which the closure of the cutpoints in a
fixed interval $[u_-,u_+]\subset(0,1)$ has capacity at least
$\kappa>0$. By point polarity and reversal, small gate balls around
the initial and terminal points can be chosen to avoid, respectively,
the portions after $u_-$ and before $u_+$. Attach disjoint polygonal
stubs from these gates to the exterior of $V$. Fixed polygonal arcs
are polar in three dimensions; using a countable rational family
and then shrinking neighborhoods, we obtain one deterministic pair
of gate-and-stub tubes $T^-,T^+$ and a positive-measure event satisfying
\[
 T^-\cap\omega[u_-,1]=\varnothing,
 \qquad T^+\cap\omega[0,u_+]=\varnothing.
\]
The gate balls are contained in the corresponding tubes.

Place scaled copies of this construction in the prescribed rooms. Join their outer stubs
in order by disjoint narrow tubes, avoiding all nonincident rooms;
such polygonal routes exist in three dimensions. Include an initial
connector from $0$ and a final connector through the sphere.
In each room impose the scaled positive-measure event, and in each
connector require Brownian motion to stay in its tube for a fixed
time and reach the next gate. The killed transition densities in the
tubes are strictly positive, and each room event has a nonzero
endpoint density, so the Markov property gives positive probability
to the finite concatenation and final exit. The gate restrictions
keep the incoming path away from each retained cutpoint's future,
and the outgoing path away from its past; disjointness of the other
rooms and connectors then makes these cutpoints global. Their
closures still have capacity at least $\kappa r$, by scaling.
\end{proof}

\begin{proof}[Proof of Proposition~\ref{prop:all-support}]
In dimension three put room centers on a lattice of spacing a fixed
large multiple of $r$, omitting a boundary layer and a few rooms near
$0$. Lemmas~\ref{lem:rooms} and~\ref{lem:containment} place positive-capacity
obstacles inside $K$ with positive probability. The centers satisfy
Lemma~\ref{lem:cap-mesh} after enlarging $L$, so
$\lambda_1(D\setminus K)\ge cr^{-2}$.

In the plane, confine Brownian motion to a narrow corridor around a
simple spiral from $0$ to the circle visiting an $r$-net. This has
positive probability. Containment forces $K$ to cross every corridor
cross-section, making $K\cup D^c$ connected and $Cr$-dense.
Beurling's estimate~\cite{Be} then gives a uniformly positive probability to
hit this set before leaving a ball of radius $Lr$. A ball-exit tail
bound puts this hit before $Tr^2$ with uniformly positive probability.
The semigroup argument above again gives
$\lambda_1(D\setminus K)\ge cr^{-2}$. Let $r\downarrow0$.
\end{proof}

\subsection{The lower edges of the spectral support}

We now turn to Proposition~\ref{prop:thresholds}. The lower edge of the support is found by comparison with a radial
segment: the segment is polar in dimension three, and it is the
extremal slit in dimension two.

\begin{proof}[Proof of Proposition~\ref{prop:thresholds}]

In dimension three, strictness above $\lambda_1(D)$ follows from
Lemma~\ref{lem:capacity-spectrum}. A straight radial segment $\gamma_*$ is polar,
so removing it leaves $H^1_0(D)$ and the spectrum unchanged.
By \eqref{eq:neighborhood-spectrum},
$\lambda_1(D\setminus\gamma_*^\varepsilon)\downarrow\lambda_1(D)$.
Brownian motion has positive probability of staying in this thin tube
until exit. Containment gives $K\subset\gamma_*^\varepsilon$ and hence
$X\leq\lambda_1(D\setminus\gamma_*^\varepsilon)$ on that event.

In the plane, a radial arc $K$ meets every circle about $0$.
For $f\in C_c^\infty(D\setminus K)$ put
$v(r,\theta)=\sqrt r\,f(re^{i\theta})$. Each angular slice vanishes
at a point, so the Dirichlet Poincar\'e inequality on an interval of
length $2\pi$ gives $\int|\partial_\theta v|^2\ge\frac14\int|v|^2$.
Integration by parts in $r$ and the interval $(0,1)$ inequality yield
\begin{equation}\label{eq:planar-slit-energy}
 \int_D|\nabla f|^2
 =\int_0^1\!\int_0^{2\pi}
 \left[|\partial_rv|^2+
 \frac{|\partial_\theta v|^2-|v|^2/4}{r^2}\right]\dd\theta\dd r
 \ge\pi^2\int_D|f|^2.
\end{equation}
Thus $X\ge\pi^2/4$. For a radial segment $\gamma_*$, equality is attained
by $r^{-1/2}\sin(\pi r)\sin(\theta/2)$, with $0<\theta<2\pi$
measured from the slit. The same thin-tube event as above and
\eqref{eq:neighborhood-spectrum} give $\essinf X=\pi^2/4$.

Moreover, equality in \eqref{eq:planar-slit-energy} determines the
arc. The identity and both nonnegative remainders extend
to $H^1_0(D\setminus K)$ by density (the radial Hardy inequality
controls $v/r$). Equality for a nonnegative ground state forces
$v(r,\theta)=\sin(\pi r)a(\theta)$, and equality in the angular
inequality forces $a(\theta)=c|\sin((\theta-\theta_0)/2)|$.
This function is positive away from one radial segment. Its
quasi-continuous representative vanishes on $K$ outside a set of
capacity zero; any subarc away from that segment has positive planar
capacity. Hence $K$ must be the segment. Radial ${\rm SLE}_2$ is almost
surely not a segment, since its Loewner driving Brownian motion is not
constant. Therefore $\PP(X>\pi^2/4)=1$.
\end{proof}

\end{document}